\newcounter{ipotesi}
 \makeatletter \@addtoreset{equation}{section}
\newtheorem{thm}{Theorem}[section]
\newtheorem{hyp}[thm]{Hypotheses}{\rm}
{\rm}
\newtheorem{coro}[thm]{Corollary}
\newtheorem{prop}[thm]{Proposition}
\newtheorem{defi}[thm]{Definition}
\newtheorem{rmk}[thm]{Remark}{\rm}
\newtheorem{example}[thm]{Example}
\newcommand{\Pp}{\mathbb P}
\newcommand{\F}{\mathcal F}
\newcounter{parentenv}
\newcommand{\R}{{\mathbb R}}
\newcommand{\N}{{\mathbb N}}
\newcommand{\Rd}{\mathbb R^d}
\newcommand{\Rm}{\mathbb R^m}
\renewcommand{\H}{{\bf H}}
\newcommand{\bd}{\begin{defi}}
\newcommand{\Z}{{\bf Z}}
\newcommand{\ed}{\end{defi}}
\newcommand{\nnm}{\nonumber}
\newcommand{\be}{\begin{equation}}
\newcommand{\ee}{\end{equation}}
\newcommand{\barr}{\begin{array}}
\newcommand{\earr}{\end{array}}
\newcommand{\bmn}{\begin{eqnarray}}
\newcommand{\emn}{\end{eqnarray}}
\newcommand{\bnm}{\begin{eqnarray*}}
\newcommand{\enm}{\end{eqnarray*}}
\newcommand{\bln}{\begin{subequations}}
\newcommand{\eln}{\end{subequations}}
\newcommand{\ba}{\begin{align}}
\newcommand{\ea}{\end{align}}
\newcommand{\banm}{\begin{align*}}
\newcommand{\eanm}{\end{align*}}
\newcommand{\one}{\mbox{$1\!\!\!\;\mathrm{l}$}}
\newcommand{\A}{\mathcal A}
\newcommand{\vv}{{\bf v}}
\newcommand{\ww}{{\bf w}}
\newcommand{\f}{{\bf f}}
\newcommand{\g}{{\bf g}}
\newcommand{\h}{{\bf h}}
\renewcommand{\k}{{\bf k}}
\newcommand{\G}{{\bf G}}
\newcommand{\E}{\mathbb E}
\newcommand{\Y}{\bf Y}
\newcommand{\uu}{{\bf u}}
\begin{document}

\title[On coupled systems of Kolmogorov equations]{On coupled systems of Kolmogorov equations with applications to stochastic differential games}
\thanks{The authors are members of G.N.A.M.P.A. of the Italian Istituto Nazionale di Alta Matematica (INdAM)}
\thanks{Work partially supported by the M.I.U.R. Research Project PRIN 2010-11 ``Problemi differenziali di evoluzione: approcci deterministici e stocastici e loro interazioni''
and by the INdAM-GNAMPA Project 2014 ``Equazioni ellittiche e paraboliche a coefficienti illimitati''.}
\author[D. Addona, L. Angiuli, L. Lorenzi, G. Tessitore]{Davide Addona, Luciana Angiuli, Luca Lorenzi and Gianmario Tessitore}
\address{D.A \& G.T.: Dipartimento di Matematica e Applicazioni, Universit\`a degli Studi di Milano Bicocca, Via Cozzi 55, I-20125 MILANO (Italy)}
\address{L.A.: Dipartimento di Matematica e Fisica ``Ennio De Giorgi'', Universit\`a del Salento, Via per Arnesano, I-73100 LECCE (Italy)}
\address{L.L.: Dipartimento di Matematica e Informatica, Universit\`a degli Studi di Parma, Parco Area delle Scienze 53/A, I-43124 PARMA (Italy)}
\email{d.addona@campus.unimib.it}
\email{luciana.angiuli@unisalento.it}
\email{luca.lorenzi@unipr.it}
\email{gianmario.tessitore@unimib.it}

\date{}

\keywords{Nonautonomous parabolic systems, unbounded coefficients, evolution operators, compactness, gradient estimates, semilinear systems, stochastic games.}
\subjclass[2000]{35K45; 35K58, 47B07, 60H10, 91A15}

\begin{abstract}
We prove that a family of linear bounded evolution operators $({\bf G}(t,s))_{t\ge s\in I}$ can be associated, in the space of vector-valued bounded and continuous functions, to a class of systems of elliptic operators $\bm{\mathcal A}$ with unbounded coefficients defined in $I\times \Rd$ (where $I$ is a right-halfline or $I=\R$) all having the same principal part. We establish some
continuity and representation properties of $({\bf G}(t,s))_{t \ge s\in I}$ and a sufficient condition for the evolution operator to be compact in $C_b(\Rd;\R^m)$.
We prove also a uniform weighted gradient estimate and some of its more relevant consequence.
\end{abstract}

\maketitle

\section{Introduction}

In recent years, the study of second-order elliptic operators with unbounded coefficients has been object of increasing interest since they appear in many models of probability and mathematical finance. Whereas the theory of Kolmogorov equations is well developed in the scalar case, as the considerable literature shows (see e.g., \cite{BerLorbook} and the references therein), the case of systems of equations is nowadays at a preliminary level.

In this paper, we consider a family of nonautonomous second-order uniformly elliptic operators $\bm{\mathcal A}$ (having all the same principal part), defined on smooth functions ${\bf w}:\Rd\to \R^m$ by
\begin{equation}
(\bm{\mathcal A}(t){\bf w})(x)=\sum_{i,j=1}^dQ_{ij}(t,x)D^2_{ij}{\bf w}(x)+\sum_{j=1}^d B_j(t,x)D_j{\bf w}(x)+C(t,x){\bf w}(x),
\label{operat-A}
\end{equation}
for any $t \in I$ ($I$ being a right-halfline or $I = \R$) and $x\in \Rd$. In \eqref{operat-A}, the entries $Q_{ij}$ of the matrix-valued function $Q$ are smooth functions, possibly unbounded, whose associated quadratic form is positive definite, namely $\inf_{I\times \Rd}\lambda_Q (t,x)$ is positive, where
$\lambda_Q(t,x)$ is the minimum eigenvalue of the matrix $Q(t,x)$.
As far as $B_j$ and $C$ are concerned, they are $m\times m$ matrices whose elements are smooth enough and possibly unbounded real valued functions (see Hypotheses \ref{base}).

We deal with the parabolic Cauchy problem associated to $\bm{\mathcal A}$ in $C_b(\Rd;\R^m)$, i.e., we look for a locally in time bounded classical solution $\uu=(u_1,u_2,\ldots, u_m)$ of the system
\begin{equation}\label{eq:cauchy_problem_system}
\left\{\begin{array}{ll}
D_t\uu(t,x)=(\bm{\mathcal A}\uu)(t,x),\qquad\;\, &(t,x)\in (s,+\infty)\times \Rd,\\[1mm]
\uu(s,x)={\bf f}(x),& x \in \Rd,
\end{array}
\right.
\end{equation}
where $s \in I$ and ${\bf f}\in C_b(\Rd;\R^m)$.
By a locally in time bounded classical solution of \eqref{eq:cauchy_problem_system} we mean a function
$\uu\in C^{1,2}((s,+\infty)\times\Rd;\R^m)\cap C_b([s,T]\times \Rd;\R^m)$ satisfying \eqref{eq:cauchy_problem_system}
and bounded in each strip $[s,T]\times\Rd$.

As already noticed, the theory of linear systems of Kolmogorov equations is at a preliminary level.
In \cite{DelLor11OnA,HieLorPruRhaSch09Glo} the simpler case of weakly coupled equations (i.e., $B_j(t,x)=b_j(x)I_m$ for any $(t,x)\in I\times \Rd$, $j=1,\ldots,d$
and some real valued function $b_j$) is considered. More precisely, in \cite{DelLor11OnA}, the analysis is carried over in the space of vector-valued bounded
and continuous functions, whereas in \cite{HieLorPruRhaSch09Glo} also the $L^p$-setting is studied, assuming that the diffusion coefficients are bounded.
Very recently, taking advantage of some results contained in this paper, the $L^p$-theory has been  extended also to first-order coupled systems as in \eqref{operat-A} (see \cite{{AngLorPal15Lpe}}).

This paper is devoted to keep on the analysis started in \cite{DelLor11OnA} aiming at considering more general system than those studied in \cite{DelLor11OnA}.
In our case, the presence of a first order term as in \eqref{operat-A}, where the first partial derivatives of all the components of $\uu$ are mixed together, makes the problem quite involved and already the unique solvability of the problem \eqref{eq:cauchy_problem_system} is a not trivial question.
Indeed, the method used in \cite{DelLor11OnA} takes strongly into account the special structure of the equations and can not be immediately adapted to our situation.
To overcome this difficulty, we provide two sets of assumptions (see Hypoyheses \ref{hyp:existence_uniqueness} and \ref{hyp_comp}) which yield uniqueness of a locally in time bounded solution to problem \eqref{eq:cauchy_problem_system}. Under Hypotheses \ref{hyp:existence_uniqueness}, we extend to our situation the method used by Kresin and Maz'ia in \cite{KreMaz12Max} in the case of bounded coefficients. Such assumptions reduce to those assumed in \cite{DelLor11OnA} in the case of weakly coupled equations and represent the natural generalization to the vector case of those typically assumed in the case of a single equation.
On the other hand, Hypotheses \ref{hyp_comp} allows to get uniqueness of the solution $\uu$ as above by a comparison argument: we show that $\uu$ can be estimated pointwise from above by $G(t,s)|\f|^2$, where $G(t,s)$ is the
evolution operator associated to the elliptic operator $\A(t)={\rm Tr}(QD^2)+\langle {\bf b},\nabla\rangle$ in $C_b(\Rd)$ and ${\bf b}=(b_1,\ldots,b_d)$ represents the diagonal part of the drift matrices $B_j$. Indeed,
we limit ourselves to considering the case when $B_j(t,x)=b_j(t,x)I_m+\tilde B_j(t,x)$ for any $(t,x)\in I\times\Rd$, any $j=1,\ldots,d$ and some $b_j\in C^{\alpha/2,\alpha}_{\rm loc}(I\times\Rd)$ and $\tilde B_j\in C^{\alpha/2,\alpha}_{\rm loc}(I\times\Rd;\R^{m^2})$.
Once uniqueness is guaranteed, the existence of a classical solution of the problem \eqref{eq:cauchy_problem_system} is then proved by some compactness and localization argument based on interior Schauder estimates recalled in the Appendix. Hence, under the previous assumptions, we can associate an evolution operator ${\bf G}(t,s)$ to $\bm{\mathcal A}$ in $C_b(\Rd;\R^m)$, i.e., for any ${\bf f}\in C_b(\Rd;\R^m)$, ${\bf G}(\cdot,s){\bf f}$ represents the unique locally in time bounded classical solution to \eqref{eq:cauchy_problem_system}, which satisfies $\|{\bf G}(t,s){\bf f}\|_{\infty}\le c_{s,T}\|{\bf f}\|_{\infty}$ for any $t>s \in I$, and some positive constant $c_{s,T}$.

The next (natural) step in our investigation consists in proving some continuity properties of the evolution operator, which hold in the scalar case.
In particular, an integral representation formula, in terms of some finite Borel measures, is available for ${\bf G}(t,s)$.
More precisely, for any $i=1,\ldots,m$, $t>s\in I$ and $x \in \Rd$, there exists a family $\{p_{ij}(t,s,x,dy):j=1,\ldots,m\}$ of finite Borel measures such that
\begin{equation}
({\bf G}(t,s)\f)_i(x)=\sum_{j=1}^m\int_{\Rd}f_j(y)p_{ij}(t,s,x,dy),\qquad\;\,\f\in C_b(\Rd;\R^m).
\label{eq:inte_intro}
\end{equation}
Formula \eqref{eq:inte_intro} allows to extend the evolution operator ${\bf G}(t,s)$ to the space of bounded Borel vector-valued functions and to prove the strong Feller property for ${\bf G}(t,s)$.
With some considerable efforts, we prove that the measures $p_{ij}(t,s,x,dy)$ are absolutely continuous with respect the Lebesgue measure.
One of the main difficulty is represented by the fact that, differently from the scalar case, $p_{ij}(t,s,x,dy)$ are signed measures.

In Section \ref{comp_sect}, we use the pointwise estimate of $|{\bf G}(t,s){\bf f}|^2$ in terms of $G(t,s)|{\bf f}|^2$ to prove that the compactness
of $G(t,s)$ in $C_b(\Rd)$ is a sufficient condition for the compactness of ${\bf G}(t,s)$ in $C_b(\Rd;\R^m)$. To prove the quoted estimate, besides
the standard regularity assumptions on the coefficients of $\bm{\mathcal A}$, the just mentioned assumption on the drift matrices $B_j$ ($j=1,\ldots,d$)
and the existence of a Lyapunov function for the operator $\mathcal{A}(t)$, we impose that all the entries of the matrices $\tilde B_j$ ($j=1,\ldots,d$)
can grow no faster than $\lambda_Q^{\sigma}$, for some $\sigma\in (0,1)$, and that the quadratic form associated to the potential $C$ is bounded from above.

Section \ref{sect-5} is devoted to prove a weighted gradient estimate for the function ${\bf G}(t,s)\f$. More precisely,
under suitable assumptions (see Hypotheses \ref{Hyp-stime-gradiente}), which are essentially growth and algebraic conditions on the coefficients of the
operator $\bm{\mathcal A}$, their derivatives and the positive definite $(t,x)$-dependent matrix $M$, we show that the map $M(J_x\G(\cdot,s)\f)^T$ is continuous and bounded in $(s,T]\times \Rd$ and that for any $T>s \in I$ there exists a positive constant $C_{s,T}$ such that
\begin{equation}\label{grest_intro}
\|M(J_x\G(t,s)\f)^T\|_{\infty}\leq C_{s,T}(t-s)^{-1/2}\|\f\|_\infty,\qquad\;\, t \in (s,T),
\end{equation}
for any $\f \in C_b(\Rd,\R^m)$.
Unweighted uniform gradient estimates are classical when the coefficients of $\bm{\mathcal A}$ are bounded and have been recently extended in the scalar case (see e.g., \cite{AngLor13OnT,AngLor14Non,BerLor07,KunLorLun09Non,lunardi})
to the case of unbounded coefficients. On the other hand, weighted gradient estimates seem to be new also in the scalar case. We stress that
we can allow $M(t,\cdot)$ to grow at most linearly at infinity. This condition could seem too restrictive, but as the classical case of bounded coefficients shows in general,
the gradient of the solution to problem \eqref{eq:cauchy_problem_system} does not vanishes with polynomial rate as $|x|\to +\infty$. Moreover, in view of the applications
to stochastic differential games, this bound on the growth of $M$ is not restrictive at all.
Estimate \eqref{grest_intro} is obtained by adapting the Bernstein
method (see \cite{Ber06Sur}), which has been already used in the case of a single elliptic equation with unbounded coefficients  (see \cite{KunLorLun09Non,lorenzi-1})
and in case of domain with sufficiently smooth boundary under Dirichlet and first-order, non tangential homogeneous boundary conditions (see \cite{AngLor13OnT,AngLor14Non}).

As a first application of \eqref{grest_intro}, we provide
a sufficient condition for the compactness of the vector evolution operator ${\bf G}(t,s)$ in $C_b(\Rd;\R^m)$ to imply the compactness of $G(t,s)$ in $C_b(\Rd)$.
The main step in this direction is the proof of the following representation formula of a component of ${\bf G}(t,s)\f$ in terms of $G(t,s)$:
\begin{align}
(\G(t,s)\f)_{\bar k}(x)=(G(t,s)f_{\bar k})(x)+\int_s^t (G(t,r)({\mathcal S}\G(\cdot,s)\f(r, \cdot))(x)dr,\qquad\;\,t>s\in I,\;\,x\in\Rd,
\label{repr_formula_intro}
\end{align}
where ${\mathcal S}\G(\cdot,s)\f=\sum_{i=1}^d \langle row_{\bar k}\tilde{B}_i,D_i\G(\cdot,s)\f\rangle +\langle row_{\bar k} C,\G(\cdot,s)\f\rangle$
and $row_{\bar k}\tilde{B}_i$, $row_{\bar k} C$ denote the $\bar k$- row of the matrices $\tilde{B}_i$ and $C$, respectively.
To make formula \eqref{repr_formula_intro} meaningful, we need to guarantee that the integral term is well defined. We prove this fact assuming that $row_{\bar k} C$ is bounded for some $\bar k \in \{1,\ldots,m\}$ and
using \eqref{grest_intro}.

The second (and more relevant for the applications) consequence of the gradient estimate is
an existence result for the
system of forward backward stochastic differential equations
\begin{equation}
\left\{
\begin{array}{ll}
-d{\bf Y}_\tau=\H(\tau,X_\tau,{\bf Z}_\tau)d\tau-{\bf Z}_\tau dW_\tau, & \tau\in[t,T], \\[1mm]
dX_\tau={\bf b}(\tau,X_\tau) d\tau+G(\tau,X_\tau) dW_\tau, & \tau\in[t,T], \\[1mm]
{\bf Y}_T=\g(X_T), \\[1mm]
X_t=x, & x\in\R^d,
\end{array}
\right.
\label{eq:FBSDE}
\end{equation}
and identification formulae for the pair ${\bf Y}$ and ${\bf Z}$ in terms of the mild solution to a
semilinear problem associated with an autonomous elliptic operator of the type \eqref{operat-A}.
The main novelty lies in the fact that we do not assume that ${\bf H}$ is (globally) Lipschitz continuous
as assumed in \cite{pardoux-peng-1992, fuhrman-tessitore}: we just assume $\beta$-H\"older continuity (for some $\beta\in (0,1)$) with respect to the second set of variables, which is
not uniform with respect to the other variables.

The above identification formulae are, finally, used to prove the existence of a Nash equilibrium for a nonzero-sum stochastic differential game.
We follow the approach of \cite{hamadene-1}, where the coefficients of the controlled system are assumed to be bounded,
and, as in our case, the diffusion is assumed to be independent of the control.
The results in \cite{hamadene-1} have been then extended to the infinite
dimensional setting in \cite{fuhrman-hu} still assuming the coefficients of the controlled system to be bounded.
Very recently, in \cite{hamadene-2} the authors have proved the existence of a Nash equilibrium,
relaxing the boundedness of the drift of the
controlled system but still assuming the diffusion to be bounded. We stress that, in our situation we can allow the diffusion
of the controlled system to be unbounded.

\subsection*{Notation}
Functions with values in $\R^m$ are displayed in bold style. Given a function $\f$ (resp. a sequence
$(\f_n)$) as above, we denote by $f_i$ (resp. $f_{n,i}$) its $i$-th component (resp. the $i$-th component
of the function $\f_n$).
By $B_b(\Rd;\Rm)$ we denote the set of all the bounded Borel measurable functions $\f:\Rd\to\Rm$.
For any $k\ge 0$, $C^k_b(\R^d;\R^m)$ is the space of all the functions
whose components belong to $C^k_b(\R^d)$, where the notation $C^k(\R^d)$ ($k\ge 0$) is standard and we use the subscript ``$c$'' and ``$b$''
for spaces of functions with compact support and bounded, respectively.
Similarly, when $k\in (0,1)$, we use the subscript ``loc'' to denote the space of all $f\in C(\Rd)$
which are H\"older continuous in any compact set of $\Rd$.
We assume that the reader is familiar also with the parabolic spaces $C^{\alpha/2,\alpha}(I\times \Rd)$
($\alpha\in (0,1)$) and $C^{1,2}(I\times \Rd)$, and we use the subscript ``loc'' with the same meaning as above.

The Euclidean inner product of the vectors $x,y\in\R^d$ is denoted by $\langle x,y\rangle$.
Square matrices of size $m$ are thought as elements of $\R^{m^2}$.
For any $M \in \R^{m^2}$, we denote by $M_{ij}$, and $row_j M$, the $ij$-th
element and the
$j$-th row vector of the matrix $M$. By ${\rm Tr}(M)$ and $M^T$ we denote the trace of $M$ and the transposed matrix of $M$.
Finally, $\lambda_M$ and $\Lambda_M$ indicate the minimum and the maximum eigenvalue of
the (symmetric) matrix $M$. For any $k \in \N$, by $I_k$ we denote the identity matrix of size $k$. When the matrix $M$
depends on $x$ (resp. $(t,x)$) we write $\lambda_M(x)$ and $\Lambda_M(x)$ (resp. $\lambda_M(t,x)$ and $\Lambda_M(t,x)$)
instead of $\lambda_{M(x)}$ and $\Lambda_{M(x)}$ (resp. $\lambda_{M(t,x)}$ and $\Lambda_{M(t,x)}$).

By $\chi_A$ and ${\bf e}_j$ we denote the characteristic function of the set
$A\subset\R^d$ and the $j$-th vector of the Euclidean basis of $\R^m$. Further, we set $\one=\chi_{\Rd}$.
The open ball with center $x_0$ and radius $R>0$ and its closure are
denoted by $B_{R}(x_0)$ and $\overline B_R(x_0)$; when $x_0=0$ we simply
 write $B_R$ and $\overline B_R$. For any interval $J\subset \R$ we denote by $\Sigma_J$ the set $\{(t,s)\in J\times J:\;\, t>s\}$.

We set ${\mathcal A}_0={\rm Tr}(QD^2)$, $\tilde{\mathcal A}={\rm Tr}(QD^2)+\langle {\bf b},\nabla_x\rangle$ and denote by $\tilde{\bm{\mathcal A}}$
the diagonal vector-valued operator whose $m$-components coincide with $\tilde{\mathcal A}$.
Next, for any $R>0$ we denote by $\G_R^{\mathcal D}(t,s)$ (resp.
$\G_R^{\mathcal N}(t,s)$) and $G_R^{\mathcal D}(t,s)$ (resp. $G_R^{\mathcal N}(t,s)$) the evolution
operator associated with the realization of the operators $\bm{\mathcal A}$ and
$\tilde {\mathcal A}$ in $C_b(B_R;\R^m)$ and $C_b(B_R)$, respectively, with homogeneous Dirichlet (resp. Neumann) boundary conditions on $\partial B_R$.
Finally, $G(t,s)$ denotes the evolution operator associated to the operator
$\tilde{\mathcal A}$ in $C_b(\Rd)$, whose existence has been proved in
\cite{KunLorLun09Non}.

\section{Existence and uniqueness of locally in time bounded classical solutions to \eqref{eq:cauchy_problem_system}}

Let $I$ be an open right-halfline or $I=\R$ and let $\bm{\mathcal A}$ be the system of elliptic operators defined in \eqref{operat-A}.
In this section we prove that, for any $s \in I$ and any
$\f \in C_b(\Rd;\Rm)$ there exists a unique locally in time bounded classical solution to the Cauchy problem \eqref{eq:cauchy_problem_system}.

The following are standing assumptions that we will not mention anymore.
\begin{hyp}\label{base}
The coefficients $Q_{ij}=Q_{ji}$, $(B_i)_{hk}$ and $C_{hk}$ belong to $C^{\alpha/2,\alpha}_{\rm loc}(I\times\R^d)$, for any $i,j=1,\ldots,d$, $h,k=1, \ldots,m$. Moreover, $\lambda_0:=\inf_{I\times \Rd}\lambda_Q>0$.
\end{hyp}

In what follows we will consider, alternatively, two sets of assumptions.
\begin{hyp}
\label{hyp:existence_uniqueness}
\begin{enumerate}[\rm (i)]
\item
There exist $\varepsilon>0$ and a function $\kappa:I\times\Rd\to\R$, bounded from above by a constant $\kappa_0$ such that
the function ${\mathcal K}_{\bm\eta,\varepsilon}:=\sum_{i,j=1}^d(Q^{-1})_{ij}[\langle B_i\bm\eta,\bm\eta\rangle\langle B_j\bm\eta,\bm\eta\rangle-\langle B_i^*\bm\eta,B_j^*\bm\eta\rangle]-4\langle C\bm\eta,\bm\eta\rangle+4\varepsilon\kappa$ is nonnegative in $I\times\R^d$ for any $\bm\eta\in \partial B_1\subset\R^m$;
\item
for any bounded interval $J\subset I$ there exist $\mu_J\in \R$ and a positive $($Lyapunov$)$ function
$\varphi_J\in C^2(\R^d)$ blowing up as $|x|\to +\infty$ such that
$\sup_{\eta \in \partial B_1}\sup_{J\times\R^d}(\A_{\eta}\varphi_J-\mu_J\varphi_J)<+\infty$,
where $\A_{\bm\eta}={\rm Tr}(QD^2)+\langle {\bf b}_{\eta},\nabla_x\rangle+2\varepsilon\kappa$ and $b_{\eta,j}=\langle B_j\eta,\eta\rangle$ for $j=1,\ldots,d$.
\end{enumerate}
\end{hyp}

\begin{hyp}\label{hyp_comp}
\begin{enumerate}[\rm (i)]
\item
There exist functions $b_i\in C^{\alpha/2,\alpha}_{\rm{loc}}(I\times
\R^d)$ and $\tilde B_i \in C^{\alpha/2, \alpha}_{\rm{loc}}(I \times \R^d; \R^{m^2})$ such that
$B_i:=b_iI_m+\tilde B_i$ in $I\times\R^d$ for any $i=1, \ldots,d$ and
$|(\tilde{B}_{i})_{jk}|\leq \xi\lambda_Q^{\sigma}$
in $I \times \Rd$, for any $j,k=1,\ldots,m$, $i=1,\ldots,d$ and some locally bounded function
$\xi:I\to (0,+\infty)$ and $\sigma \in (0,1)$;
\item
$H_J:=\sup_{J\times\Rd}(\Lambda_C+4^{-1}m^2d\xi^2\lambda_Q^{2\sigma-1})<+\infty$ for any bounded interval $J\subset I$;
\item
Hypothesis $\ref{hyp:existence_uniqueness}(ii)$ is satisfied with $\A_{\bm\eta}$ being replaced by $\tilde\A={\rm Tr}(QD^2)+\langle {\bf b}, \nabla\rangle$, where ${\bf b}=(b_1,\ldots,b_m)$.
\end{enumerate}
\end{hyp}

\begin{rmk}
{\rm
\begin{enumerate}[\rm(i)]
\item
Hypothesis $\ref{hyp:existence_uniqueness}$(i) can be replaced with the weaker requirement
that $\mathcal K_{\eta,\varepsilon}$ is bounded from below in $J\times\Rd$, uniformly with respect to $\eta\in\partial B_1$, for any bounded interval $J\subset I$.
Indeed, in this case, for any $J$ as above, let $c_J>0$ be such that
$\mathcal K_{\eta,\varepsilon}\geq -c_J$  in $J\times\R^d$ for any $\eta \in\partial B_1$. The change of unknowns
$\vv(t,x):=e^{-c_J(t-s)/4}\uu(t,x)$ transforms the elliptic operator $\bm{\mathcal A}$ into the operator
$\bm{\mathcal A}-c_J/4$, which satisfies Hypothesis $\ref{hyp:existence_uniqueness}$(i) and, clearly, the uniqueness of $\vv$ is equivalent
to the uniqueness of $\uu$.
\item
In the scalar case when the elliptic operator in \eqref{operat-A} is ${\mathcal A}={\rm Tr}(QD^2)+\langle {\bf b},\nabla\rangle+c$ and $c$ is bounded from above (otherwise, Proposition \ref{prop:max_principle} fails in general), taking $\varepsilon=1$ and $\kappa=c$, one easily realizes that Hypothesis \ref{hyp:existence_uniqueness}(i) is trivially satisfied. Moreover, Hypothesis \eqref{hyp:existence_uniqueness}(ii) reduces to require the existence of a Lyapunov function for the operator $\A+c$,
for any bounded interval $J\subset I$. This condition seems to be much more general than that typically assumed (i.e., the existence of a Lyapunov function for the operator $\A$) and, as the proof of Proposition \ref{prop:max_principle} shows, it appears naturally when one considers the problem solved by $u^2$, where $u$ is the solution to the Cauchy problem \eqref{eq:cauchy_problem_system}, with $\A$ and $f\in C_b(\Rd)$ instead of $\bm{\mathcal A}$ and ${\bf f}$.
In view of this fact, Proposition \ref{prop:max_principle} and Theorem \ref{thm:existence_solution} below hold true provided there exists $\gamma\ge 1$ and a Lyapunov function for the operator $\A+\gamma c$.
\item
Hypotheses \ref{hyp:existence_uniqueness} and \ref{hyp_comp} are independent in
general. Indeed Hypotheses \ref{hyp_comp}(i)-(ii) imply Hypothesis \ref{hyp:existence_uniqueness}(i), whereas Hypothesis \ref{hyp:existence_uniqueness}(ii) is stronger  than Hypothesis \ref{hyp_comp}(iii). Indeed, if Hypothesis \ref{hyp_comp}(i) holds, then
$\sum_{i,j=1}^d(Q^{-1})_{ij}[\langle \tilde B_i{\bm \eta},{\bm \eta}\rangle\langle \tilde B_j{\bm \eta},{\bm \eta}\rangle-\langle \tilde B_i{\bm \eta},\tilde B_j{\bm \eta}\rangle]$
is negative and of order $\lambda_Q^{2\sigma-1}$. This fact together with Hypothesis \ref{hyp_comp}(ii) implies Hypothesis  \ref{hyp:existence_uniqueness}(i) (taking
(i) into account). On the other hand, assuming Hypothesis \ref{hyp_comp}(i), the function ${\mathcal K}_{\bm \eta,\varepsilon}$ which, clearly, depends only upon $\tilde{B}_i$ (since the diagonal parts cancel), can be of order less than $\lambda_Q^{1-2\sigma}$. For instance, assume $d=m=2$, $Q={\rm diag}(\lambda_Q,\Lambda_Q)$,
$B_1=b_1I_2$ diagonal and $\tilde{B}_2\neq 0$. Then,
${\mathcal K}_{\bm \eta,0}=\Lambda_Q^{-1}(\langle\tilde{B}_2{\bm \eta},{\bm \eta}\rangle^2-|\tilde{B}_2{\bm \eta}|^2)-4\langle C{\bm \eta},{\bm \eta}\rangle$
is bounded from below if $\Lambda_C+\xi^2\lambda_Q^{2\sigma}\Lambda_Q^{-1}<+\infty$,
which is weaker than the condition in Hypothesis \ref{hyp_comp}(ii) if $\lambda_Q=o(\Lambda_Q)$.
Finally, concerning Hypotheses \ref{hyp:existence_uniqueness}(ii) and \ref{hyp_comp}(iii), the latter requires the existence of a Lyapunov function for
{\em one} decomposition of each drift matrix, while the former requires the existence of a Lyapunov function for {\em any}
decomposition $B_i = {\bf b}_\eta I_m + \tilde{B}_{\eta,i}$, $\eta \in \partial B_1$.
\end{enumerate}
}
\end{rmk}

\subsection{The case when Hypotheses \ref{hyp:existence_uniqueness} hold true}
The uniqueness of the classical solution to problem \eqref{eq:cauchy_problem_system}
which is bounded in any strip $[s,T]\times\R^d$, $T>s \in I$, is a straightforward consequence of the following
result, whose proof is an adaption to our situation of the method in \cite[Thm. 8.7]{KreMaz12Max}
which deals with the case of bounded coefficients.

\begin{prop}
Let $s \in I$, $\f\in C_b(\R^d;\R^m)$ and let $\uu$ be a locally in time bounded classical solution
to problem \eqref{eq:cauchy_problem_system}. If Hypotheses
$\ref{hyp:existence_uniqueness}$ hold true, then
$\|\uu(t,\cdot)\|_{\infty}\leq e^{\varepsilon\kappa_0(t-s)}\|\f\|_{\infty}$ for any $t>s$.
\label{prop:max_principle}
\end{prop}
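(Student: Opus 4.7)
My plan is to reduce the estimate to a scalar parabolic differential inequality for $v := |\uu|^2$ and then close the argument by a Lyapunov-based maximum principle for scalar Kolmogorov operators with unbounded coefficients. Since $\uu \in C^{1,2}((s,+\infty)\times\R^d;\R^m)$ is locally bounded in time, so is $v$, and at every $(t,x)$ with $\uu(t,x)\ne 0$ the vector $\bm\eta(t,x) := \uu(t,x)/|\uu(t,x)|$ belongs to $\partial B_1$. Differentiating $v = \langle\uu,\uu\rangle$, using the equation solved by $\uu$, and rewriting $\langle \uu, B_j D_j\uu\rangle = |\uu|\langle B_j^*\bm\eta, D_j\uu\rangle$, one obtains, at such points,
\begin{align*}
D_tv - \A_{\bm\eta}v = -2\sum_{i,j=1}^dQ_{ij}\langle D_i\uu,D_j\uu\rangle + 2|\uu|\sum_{j=1}^d\langle w_j,D_j\uu\rangle + 2|\uu|^2(\langle C\bm\eta,\bm\eta\rangle - \varepsilon\kappa),
\end{align*}
where $w_j := B_j^*\bm\eta - \langle B_j\bm\eta,\bm\eta\rangle\bm\eta$.

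The crucial step is to absorb the mixed first-order term by the negative second-order term. A componentwise Cauchy-Schwarz inequality with respect to the positive definite form $Q$, followed by Young's inequality with the balance that exactly halves the second-order coefficient, yields
\begin{equation*}
2|\uu|\sum_{j=1}^d\langle w_j, D_j\uu\rangle \le 2\sum_{i,j=1}^dQ_{ij}\langle D_i\uu,D_j\uu\rangle + \frac{|\uu|^2}{2}\sum_{i,j=1}^d(Q^{-1})_{ij}\langle w_i,w_j\rangle.
\end{equation*}
The identity $\langle w_i,w_j\rangle = \langle B_i^*\bm\eta, B_j^*\bm\eta\rangle - \langle B_i\bm\eta,\bm\eta\rangle\langle B_j\bm\eta,\bm\eta\rangle$ recombines the surviving terms into $-\tfrac{1}{2}|\uu|^2\mathcal{K}_{\bm\eta,\varepsilon}(t,x)$, so Hypothesis \ref{hyp:existence_uniqueness}(i) gives the pointwise bound $D_tv \le \A_{\bm\eta(t,x)}v$ on the open set $\{\uu\ne 0\}$.

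To close the estimate I would fix $T>s$, a bounded open interval $J\supset[s,T]$, and a parameter $\gamma > \max\{\mu_J,2\varepsilon\kappa_0\}$, and pass to the rescaled quantities $\hat v := e^{-\gamma(t-s)}v$, $\bar{\hat v}(t) := e^{(2\varepsilon\kappa_0-\gamma)(t-s)}\|\f\|_\infty^2$ and operator $\mathcal{L}_{\bm\eta} := \A_{\bm\eta} - \gamma$. The rescaling makes the zeroth-order coefficient $2\varepsilon\kappa - \gamma$ of $\mathcal{L}_{\bm\eta}$ uniformly negative on $J\times\R^d$, preserves the inequality $D_t\hat v - \mathcal{L}_{\bm\eta}\hat v \le 0$ on $\{\uu\ne 0\}$, and makes $\bar{\hat v}$ a supersolution, with $D_t\bar{\hat v} - \mathcal{L}_{\bm\eta}\bar{\hat v} = 2\varepsilon(\kappa_0 - \kappa)\bar{\hat v}\ge 0$ and $\bar{\hat v}(s,\cdot)=\|\f\|_\infty^2\ge\hat v(s,\cdot)$. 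Hypothesis \ref{hyp:existence_uniqueness}(ii) supplies a Lyapunov function $\varphi_J$ with $K := \sup_{\bm\eta\in\partial B_1}\sup_{J\times\R^d}(\A_{\bm\eta}\varphi_J - \mu_J\varphi_J) < +\infty$; choosing a constant $c_1$ with $(\gamma - 2\varepsilon\kappa_0)c_1 \ge K^+$, the function $z(x) := \varphi_J(x) + c_1$ is positive, blows up at infinity and satisfies $-\mathcal{L}_{\bm\eta}z \ge (\gamma - \mu_J)\varphi_J \ge 0$ uniformly in $\bm\eta\in\partial B_1$ on $J\times\R^d$.

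For every $\rho>0$, the function $\psi_\rho := \hat v - \bar{\hat v} - \rho z$ is nonpositive at $t=s$, bounded above, and tends to $-\infty$ as $|x|\to+\infty$ uniformly on $[s,T]$, so if $\sup_{[s,T]\times\R^d}\psi_\rho > 0$ it is attained at some $(t_0,x_0)\in(s,T]\times\R^d$. At such a point, either $\uu(t_0,x_0)=0$ (whence $\psi_\rho(t_0,x_0) = -\bar{\hat v}(t_0) - \rho z(x_0)<0$, contradiction), or $\uu(t_0,x_0)\ne 0$ and the optimality conditions $D_t\psi_\rho\ge 0$, $\nabla\psi_\rho=0$, $Q_{ij}D_{ij}\psi_\rho\le 0$ combined with the strict negativity $2\varepsilon\kappa - \gamma<0$ and $D_t\psi_\rho - \mathcal{L}_{\bm\eta}\psi_\rho\le 0$ force $\psi_\rho(t_0,x_0)\le 0$, again contradicting positivity. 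Hence $\psi_\rho\le 0$ throughout $[s,T]\times\R^d$ and, letting $\rho\downarrow 0$, $v\le \bar v$, which is the claim. The delicate step is the vector-valued Bernstein-type computation and the precise Young balance: everything hinges on producing exactly the combination $\mathcal{K}_{\bm\eta,\varepsilon}$ with the right coefficients, after which the maximum-principle argument, made legitimate by the exponential renormalisation $e^{-\gamma(t-s)}$, is standard.
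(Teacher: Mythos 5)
Your proof is correct, and it takes a genuinely different route from the paper, which adapts Kresin--Maz'ya's argument. The paper builds the Lyapunov correction $n^{-1}\varphi$ directly into the auxiliary function $v_n$ and, at the alleged maximum point, uses $\nabla_x v_n=0$ as a constraint and then solves, via Lagrange multipliers, the constrained optimization that maximizes $F_{n,\bm\zeta}$ over $(\bm\xi^1,\ldots,\bm\xi^d)$; the optimizer is computed explicitly and the value is shown to be nonpositive through $\mathcal K_{\bm\eta,\varepsilon}$ and the Lyapunov condition. You instead separate the two steps cleanly: you first derive the pointwise scalar inequality $D_tv-\A_{\bm\eta}v\le -\tfrac12|\uu|^2\mathcal K_{\bm\eta,\varepsilon}\le 0$ on $\{\uu\ne 0\}$ by the quadratic-form Cauchy--Schwarz and a Young inequality tuned so that the second-order terms cancel exactly, and only then run a standard Lyapunov-barrier comparison argument (with the extra case distinction at points where $\uu$ vanishes, which you handle correctly). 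Both approaches must, and do, produce the same algebraic combination $\mathcal K_{\bm\eta,\varepsilon}$: your Cauchy--Schwarz step and the paper's Lagrange-multiplier step are essentially dual descriptions of the same optimization. The trade-off is that your version gives a clean differential inequality that is reusable independently of the choice of barrier (and avoids the explicit Lagrange computation), at the small cost of tracking the case $\uu(t_0,x_0)=0$ separately, while the paper's version is a self-contained computation at the maximum point in the style of \cite{KreMaz12Max}. Both yield the stated estimate.
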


\begin{proof}
Fix $T>s \in I$ and let $\mu=\mu_{[s,T]}$ and $\varphi=\varphi_{[s,T]}$. Up to replacing
$\mu$ with a larger constant if needed, we can assume that
$\sup_{\eta \in \partial B_1}\sup_{[s,T]\times\R^d}\left (\A_{\eta}\varphi-\mu\varphi\right )<0$
and $\mu>2\varepsilon\kappa_0$.

For any $n\in\N$, we set
$v_n(t,x):=e^{-\mu(t-s)}|\uu(t,x)|^2-e^{-(\mu-2\varepsilon\kappa_0)(t-s)}\|\f\|_{\infty}^2-n^{-1}\varphi(x)$ for any $(t,x)\in [s,T]\times \Rd$.
As it is immediately seen,
\begin{align*}
D_tv_n(t,\cdot)
=& e^{-\mu(t-s)}(\A_0(t)+2\varepsilon\kappa(t,\cdot)
-\mu)|\uu(t,\cdot)|^2+(\mu\!-\!2\varepsilon\kappa_0)e^{-(\mu-2\varepsilon\kappa_0)(t-s)}\|\f\|_{\infty}^2\\
&-2e^{-\mu(t-s)}V(t,\cdot,D_1\uu(t,\cdot),\ldots,D_d\uu(t,\cdot),\uu(t,\cdot)),
\end{align*}
for any $t\in (s,T]$, where
$V(\cdot,\cdot,{\bm\xi}^1,\ldots,{\bm \xi}^d,{\bm \zeta}):=
\sum_{i,j=1}^dQ_{ij}\langle{\bm \xi^i},{\bm \xi^j}\rangle-\sum_{j=1}^d\langle B_j{\bm\xi^j},{\bm\zeta}\rangle-\langle (C-\varepsilon\kappa){\bm\zeta},{\bm\zeta}\rangle$
for any ${\bm\xi^1},\ldots,{\bm\xi^d},{\bm\zeta}\in\R^m$. Since $\mu>2\varepsilon\kappa_0$, we can estimate
\begin{align}
&D_tv_n(t,\cdot) -(\A_0(t)+2\varepsilon\kappa(t,\cdot)-\mu)v_n(t,\cdot)
-2\varepsilon(\kappa(t,\cdot)-\kappa_0)e^{-(\mu-2\varepsilon\kappa_0)(t-s)}\|{\bf f}\|_{\infty}\nonumber\\
<&n^{-1}(\A_0(t)+2\varepsilon\kappa(t,\cdot)-\mu)\varphi
-2e^{-\mu(t-s)}V(t,\cdot,D_1\uu(t,\cdot),\ldots,D_d\uu(t,\cdot),\uu(t,\cdot)),
\label{davide-lecce}
\end{align}
for any $t\in (s,T]$ and $x\in\R^d$.

Our aim consists in proving that $v_n\leq0$ in $[s,T]\times\R^d$ for any $n\in\N$. Indeed, in this case, letting $n\rightarrow\infty$ and recalling that
$T$ has been arbitrarily fixed, we will obtain $e^{-2\varepsilon\kappa_0 (t-s)}|\uu(t,x)|^2-\|\f\|_{\infty}^2\le 0$ for any $t\in [s,T]$ and $x\in\Rd$,
i.e, the estimate in the statement will follow from the arbitrariness of $T>s$.

Since $v_n$ tends to $-\infty$ as $|x|\rightarrow+\infty$, uniformly with respect to $t\in [s,T]$, it has a
maximum attained at some point $(t_0,x_0)\in [s,T]\times\R^d$. If $t_0=s$, then we are done since $v_n(s,\cdot)<0$.
Suppose that $t_0>s$ and assume, by contradiction, that $v_n(t_0,x_0)>0$. Since $2\varepsilon\kappa(t_0,x_0)-\mu\le 2\varepsilon\kappa_0-\mu<0$, the left-hand side of \eqref{davide-lecce} is strictly positive at $(t_0,x_0)$.

Let us prove that the right-hand side of \eqref{davide-lecce} is nonpositive at $(t_0,x_0)$. This will lead us to a contradiction and we
will conclude that $v_n\le 0$ in $[s,T]\times\R^d$.

Since $\nabla_x v_n$ vanishes at $(t_0,x_0)$,
$\langle D_j\uu(t_0,x_0),\uu(t_0,x_0)\rangle=e^{\mu(t_0-s)}D_j\varphi(x_0)/(2n)$ for any $j=1,\ldots,d$. Hence, it is enough to
show that the maximum of the function
\begin{align*}
F_{n,{\bm\zeta}}({\bm\xi^1},\ldots,{\bm\xi^d}):=n^{-1}(\A_0(t_0)+2\varepsilon\kappa(t_0,\cdot)-\mu)\tilde\varphi(x_0)-2V(t_0,x_0,{\bm\xi^1},\ldots,{\bm\xi^d},{\bm\zeta}),
\end{align*}
in the set $\Sigma=\left\{({\bm\xi^1},\ldots,{\bm\xi^d})\in\R^{md}: \langle {\bm\xi^j},{\bm\zeta}\rangle=(2n)^{-1}D_j\tilde\varphi(x_0),\,j=1,\ldots,d\right\}$ is nonpositive,
where $\tilde\varphi=e^{\mu(t_0-s)}\varphi$. Note that the function $F_{n,{\bm\zeta}}$ has a maximum in $\Sigma$ attained at some point
$({\bm\xi^1_0},\ldots,{\bm\xi^d_0})$, since
it tends to $-\infty$ as $\|({\bm\xi^1},\ldots,{\bm\xi^d})\|\to +\infty$. Applying the Lagrange multipliers theorem, we easily see that
$({\bm\xi^1_0},\ldots,{\bm\xi^d_0})$ satisfies
\begin{equation}
2\sum_{k=1}^dQ_{jk}(t_0,x_0)\xi^k_{0,i}-\sum_{k=1}^m(B_j(t_0,x_0))_{ki}\zeta_{k}-\gamma_j\zeta_{i}=0,\qquad\; i=1,\ldots,d,\;\,j=1,\ldots,m,
\label{sette}
\end{equation}
for some real numbers $\gamma_1,\ldots,\gamma_d$, where $\xi^k_{0,i}$ and $\zeta_i$ ($i=1,\ldots,m$) denote, respectively, the components of the vectors
${\bm\xi^k_0}$ and ${\bm\zeta}$. Multiplying both sides of \eqref{sette} by $\zeta_i$ and summing over $i$, we get
$\gamma_j=|{\bm\zeta}|^{-2}[n^{-1}(Q(t_0,x_0)\nabla\tilde\varphi(x_0))_j-\langle B_j(t_0,x_0){\bm\zeta},{\bm\zeta}\rangle]$ for any $j=1,
\ldots,m$.
Replacing the expression of $\gamma_j$ in \eqref{sette}, we deduce that
\begin{align*}
{\bm\xi^j_0}=&\frac{1}{2n}|{\bm\zeta}|^{-2}{\bm\zeta} D_j\tilde\varphi(x_0)+\frac{1}{2}\sum_{k=1}^d(Q^{-1})_{jk}(t_0,x_0)\left[
B_k^*(t_0,x_0){\bm\zeta}-|{\bm\zeta}|^{-2}\langle B_k(t_0,x_0){\bm\zeta},{\bm\zeta}\rangle{\bm\zeta}\right],
\end{align*}
for $j=1,\ldots,d$.
Hence, a direct computation shows that
\begin{align*}
V(t_0,x_0,{\bm\xi_0^1},\ldots,{\bm\xi_0^d})=&
\frac{1}{4n^2|{\bm\zeta}|^2}|\sqrt{Q}(t_0,x_0)\nabla\tilde\varphi(x_0)|^2
+\frac{1}{4}|\bm\zeta|^2{\mathcal K}_{\bm\zeta/|\bm\zeta|}(t_0,x_0)\\
&-\frac{1}{2n|{\bm\zeta}|^2}\sum_{j=1}^dD_j\tilde\varphi(x_0)\langle B_j(t_0,x_0){\bm\zeta},{\bm\zeta}\rangle
\end{align*}
and, consequently,
\begin{align*}
\max_{\Sigma}
F_{n,{\bm\zeta}}
=\frac{1}{n}(\A_{{\bm\zeta}/|{\bm\zeta}|}(t_0)\tilde\varphi(x_0)-\mu\tilde\varphi(x_0))
-\frac{1}{2n^2|{\bm\zeta}|^2}|\sqrt{Q}(t_0,x_0)\nabla\tilde\varphi(x_0)|^2
-\frac{1}{2}|{\bm\zeta}|^2{\mathcal K}_{\bm\zeta/|\bm\zeta|,\varepsilon}(t_0,x_0).
\end{align*}
By Hypothesis \ref{hyp:existence_uniqueness}(ii) and the choice of $\mu$, the right-hand side of the previous formula is nonpositive.
The proof is now complete.
\end{proof}

We now turn to show the existence of a unique locally in time bounded classical solution $\uu$ to problem
\eqref{eq:cauchy_problem_system}.

\begin{thm}
Under Hypotheses $\ref{hyp:existence_uniqueness}$, for any $\f\in C_b(\R^d;\R^m)$ and $s \in I$, the Cauchy problem
\eqref{eq:cauchy_problem_system}
admits a unique locally in time bounded classical solution $\uu$.
Moreover, $\uu\in C^{1+\alpha/2,2+\alpha}_{\rm loc}((s,+\infty)\times\Rd)$ and satisfies
\begin{equation}
\|\uu(t,\cdot)\|_{\infty}\leq e^{\varepsilon\kappa_0(t-s)}\|\f\|_{\infty}, \qquad\;\,t>s.
\label{eq:norma_condition_classical solution}
\end{equation}
\label{thm:existence_solution}
\end{thm}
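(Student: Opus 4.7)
Uniqueness and the estimate \eqref{eq:norma_condition_classical solution} are an immediate consequence of Proposition \ref{prop:max_principle}, so the entire task reduces to producing a classical solution with the claimed regularity. My plan is the standard exhaustion-by-balls argument: I approximate $\bm{\mathcal A}$ on bounded cylinders where the coefficients are bounded and $\alpha$-Hölder continuous, and pass to the limit via interior Schauder estimates.

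For each $R\in\N$ I would fix $\f_R\in C^\infty_c(B_R;\Rm)$ with $\|\f_R\|_\infty\le\|\f\|_\infty$, $\f_R\equiv\f$ on $B_{R-1}$, $\f_R\equiv\mathbf 0$ near $\partial B_R$ and $\f_R\to\f$ locally uniformly. Classical parabolic theory on $[s,T]\times\overline B_R$ then yields, for every $T>s$, a unique solution $\uu_R\in C([s,T]\times\overline B_R;\Rm)\cap C^{1+\alpha/2,2+\alpha}((s,T]\times\overline B_R;\Rm)$ of the Cauchy-Dirichlet problem with initial datum $\f_R$ and homogeneous Dirichlet condition on $(s,T]\times\partial B_R$; continuity down to $t=s$ is guaranteed because $\f_R$ vanishes near $\partial B_R$, so compatibility is satisfied.

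The first key step is an $R$-independent sup-bound. I would repeat \emph{verbatim} the argument of Proposition \ref{prop:max_principle} applied to $v(t,x):=e^{-\mu(t-s)}|\uu_R(t,x)|^2-e^{-(\mu-2\varepsilon\kappa_0)(t-s)}\|\f\|_\infty^2$ on the compact cylinder $[s,T]\times\overline B_R$. The crucial simplification here is that, since $\overline B_R$ is compact, the Lyapunov function of Hypothesis \ref{hyp:existence_uniqueness}(ii) is not needed to ensure existence of a maximum; the condition $v\le 0$ on the parabolic boundary follows from the Dirichlet condition and the bound on $\f_R$, while an interior positive maximum is ruled out by the pointwise algebraic computation that uses only Hypothesis \ref{hyp:existence_uniqueness}(i). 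This yields $\|\uu_R(t,\cdot)\|_\infty\le e^{\varepsilon\kappa_0(t-s)}\|\f\|_\infty$ uniformly in $R$. The interior Schauder estimates recalled in the Appendix then furnish, for every compact $K\Subset(s,+\infty)\times\Rd$ and $R$ large enough, a uniform bound on $\|\uu_R\|_{C^{1+\alpha/2,2+\alpha}(K;\Rm)}$; a diagonal Arzelà-Ascoli extraction produces a subsequence converging in $C^{1,2}_{\rm loc}((s,+\infty)\times\Rd;\Rm)$ to a function $\uu$ that solves $D_t\uu=\bm{\mathcal A}\uu$, lies in $C^{1+\alpha/2,2+\alpha}_{\rm loc}$, and inherits \eqref{eq:norma_condition_classical solution}.

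The step I expect to be the most delicate is the continuity of $\uu$ at $t=s$, since the convergence $\uu_R\to\uu$ is only locally uniform on $(s,+\infty)\times\Rd$. I would handle it through an $R$-independent barrier construction: fix $x_0\in\Rd$ and $\rho>0$, take $R$ so large that $\f_R\equiv\f$ on $B_{2\rho}(x_0)$, and note that on $\overline B_{2\rho}(x_0)$ all the coefficients of $\bm{\mathcal A}$ are bounded. Applying the same Proposition \ref{prop:max_principle}-type argument to the scalar auxiliary function $|\uu_R(t,x)-\f(x_0)|^2-K[(t-s)+|x-x_0|^2]-\omega_\f(\rho)^2$ on $(s,s+\delta)\times B_\rho(x_0)$, with $K$ large enough depending only on $\|\f\|_\infty$, the local sup-norms of the coefficients and the modulus of continuity of $\f$ on $B_{2\rho}(x_0)$, yields a modulus-of-continuity estimate at $t=s$ uniform in $R$. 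Passing to the limit along the extracted subsequence gives $\uu(t,\cdot)\to\f$ locally uniformly as $t\downarrow s$, and hence $\uu\in C_b([s,T]\times\Rd;\Rm)$ for every $T>s$, concluding the proof.
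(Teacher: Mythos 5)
The skeleton of your proof --- exhaustion by balls, an $R$-independent sup-bound via a Proposition~\ref{prop:max_principle}-type argument, interior Schauder estimates, Ascoli--Arzel\`a, and a localised step for continuity at $t=s$ --- matches the paper's. Your remark that on the compact cylinder $[s,T]\times\overline B_R$ the Lyapunov function of Hypothesis \ref{hyp:existence_uniqueness}(ii) is unnecessary for the uniform $L^\infty$ bound is correct: a maximum is attained by compactness, and dropping the $n^{-1}\varphi$ term from $v_n$ still yields (via the same Lagrange-multiplier computation, now with the constraint $\langle D_j\uu,\uu\rangle=0$) a nonpositive right-hand side, while the left-hand side of the analogue of \eqref{davide-lecce} is strictly positive at an interior positive maximum. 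Replacing $\f$ by a truncation $\f_R$ compactly supported in $B_R$ is a harmless variation of the paper, which keeps $\f$ as initial datum and tolerates the discontinuity on $\{s\}\times\partial B_n$. The genuine divergence --- and the gap --- is in the continuity argument at $t=s$.

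Your barrier $K[(t-s)+|x-x_0|^2]+\omega_\f(\rho)^2$ has the wrong parabolic behaviour. Set $\ww:=\uu_R-\f(x_0)$ and $z:=K[(t-s)+|x-x_0|^2]+\omega_\f(\rho)^2-|\ww|^2$; at a negative interior minimum of $z$ one needs $D_tz-\mathrm{Tr}(QD^2z)>0$ to reach the contradiction. But the barrier's own contribution is $(D_t-\mathrm{Tr}(QD^2))\bigl(K[(t-s)+|x-x_0|^2]\bigr)=K\bigl(1-2\,\mathrm{Tr}\,Q\bigr)$, and since $\mathrm{Tr}\,Q\ge d\lambda_0$, this is $\le 0$ as soon as $\mathrm{Tr}\,Q\ge 1/2$ on $B_\rho(x_0)$ --- and it becomes \emph{more} negative as $K$ grows. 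Increasing $K$ therefore cannot close the argument: the sign of the barrier's parabolic deficit is against you, independently of the coupled structure or of Hypotheses \ref{hyp:existence_uniqueness}. The repair is elementary --- decouple the constants, taking $K(t-s)+N|x-x_0|^2+\omega_\f(\rho)^2$ with $N$ fixed by the lateral condition $N\rho^2\ge 4\|\f\|_\infty^2$ and then $K$ large relative to $N\sup_{B_\rho(x_0)}\mathrm{Tr}\,Q$ and to the coupling terms generated by $B_j$ and $C$ on $B_\rho(x_0)$ --- but as written the comparison fails. The paper avoids pointwise barriers altogether: it multiplies $\uu_{n_k}$ by a cut-off $\vartheta$ with $\chi_{B_{R-1}}\le\vartheta\le\chi_{B_R}$, represents $\vartheta\uu_{n_k}$ through the Dirichlet variation-of-constants formula for $\G_R^{\mathcal D}(t,s)$, estimates the commutator source $\g_k$ via the interior gradient estimate of Proposition~\ref{prop-A1} (giving a $(t-s)^{-1/2}$ singularity that integrates to $\sqrt{t-s}$), and lets $k\to+\infty$ and then $t\to s^+$; the sign issue never appears.
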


\begin{proof}
Fix ${\bf f}\in C_b(\R^d;\R^m)$ and let $\uu_n$ be the unique classical solution to the Cauchy-Dirichlet problem
\begin{equation}\label{p_palla}
\left\{
\begin{array}{lll}
D_t\uu_n(t,x)=(\bm{\mathcal A}\uu_n)(t,x), & t\in(s,+\infty), &  x\in B_n, \\[.5mm]
\uu_n(t,x)={\bf 0}, & t\in(s,+\infty), & x\in\partial B_n, \\[.5mm]
\uu_n(s,x)=\f(x), & & x\in B_n,
\end{array}
\right.
\end{equation}
(see \cite[Thm. IV.5.5]{LadSolUra68Lin}). By classical solution we mean a function which belongs to $C^{1,2}((s,+\infty)\times B_n)$ which
is continuous in $([s,+\infty)\times\overline{B}_n)\setminus (\{s\}\times\partial B_n)$.

Let us prove that the sequence
$(\uu_n)$ converges to a solution to problem
\eqref{eq:cauchy_problem_system} which satisfies the properties in the statement. The same arguments as in the proof of Proposition \ref{prop:max_principle} show that
\begin{equation}
\|\uu_n(t,\cdot)\|_{\infty}\le e^{\varepsilon\kappa_0(t-s)}\|{\bf f}\|_{\infty},\qquad\;\,t>s.
\label{stima-un}
\end{equation}
Hence, the interior Schauder estimates in Theorem \ref{thm-A2}
guarantee that, for any compact set $K \subset (s,+\infty)\times \Rd$ and large $n$, the sequence $\|\uu_n\|_{C^{1+\alpha/2,2+\alpha}(K;\R^m)}$ is bounded by a constant independent of $n$.
By the Ascoli-Arzel\`a Theorem, a diagonal argument and the arbitrariness of $K$, we can determine a subsequence $(\uu_{n_k})$ which converges to a function
$\uu\in C^{1+\alpha/2,2+\alpha}_{\rm loc}((s,+\infty)\times\R^d;\R^m)$ in $C^{1,2}(K;\R^m)$
for any $K$ as above. Clearly, $\uu$ satisfies
the differential equation in \eqref{eq:cauchy_problem_system} as well as
the estimate \eqref{eq:norma_condition_classical solution}, as it is easily seen
letting $n\to +\infty$ in \eqref{stima-un}; we just need to
show that $\uu$ is continuous in $t=s$ and it therein equals the function $\f$.
As a byproduct, we will deduce that the whole sequence
$(\uu_n)$ converges in $C^{1,2}(K;\R^m)$, for any compact set
$K$ as above, since any subsequence of $(\uu_n)$ has a
subsequence which converges in $C^{1,2}(K;\R^m)$.

Fix $R\in\N$ and let $\vartheta$ be any smooth function such that $\chi_{B_{R-1}}\le \eta \le \chi_{B_R}$.
For any $n_k>R$ the function $\vv_k:=\eta\uu_{n_k}$ belongs to $C([s,T]\times\overline{B}_R;\R^m)\cap C^{1,2}((s,T]\times B_R;\R^m)$,
it vanishes on $(s,T]\times \partial B_R$, $\vv_k(s,\cdot)=\vartheta\f$ and
$D_t\vv_k-\bm{\mathcal A}\vv_k={\bf g}_k$ in $(s,T]\times B_R$,
where ${\g}_k=-{\rm Tr}(QD^2\eta)\uu_{n_k}-2(J_x\uu_{n_k})Q\nabla\eta-\sum_{j=1}^d(B_j\uu_{n_k})D_j\eta$, for any $n_k>R$.
Since the function $t\mapsto (t-s)\|\uu_{n_k}(t,\cdot)\|_{C^2_b(B_{n_k})}$ is bounded in $(s,s+1)$ (by a constant depending on $k$) we can apply Proposition \ref{prop-A1} and, taking \eqref{stima-un} into account, we can estimate $|\g_k(t,x)|\le c_R(1+(t-s)^{-1/2})\|\f\|_\infty$, for any $(t,x)\in (s,s+1)\times B_R$ and any $n_k>R$, where $c_R$ is a positive constant independent of $k$. Let us represent $\vv_k$ by means of the variation-of-constants formula
\begin{eqnarray*}
\vv_k(t,x)=({\bf G}_R^{\mathcal D}(t,s)(\vartheta\f))(x)+\int_s^t ({\bf G}_R^{\mathcal D}(t,r){\bf g}_k(r,\cdot))(x)dr,\qquad\;\,t\in [s,T],\;\,x\in B_R.
\end{eqnarray*}
Recalling that $\vv_k\equiv\uu_{n_k}$ in $B_{R-1}$ and taking the previous estimate into account, it follows that
$|\uu_{n_k}(t,\cdot)-\f|\le |{\bf G}_R^{\mathcal D}(t,s)(\eta\f)-\f|
+c'_R\sqrt{t-s}\|\f\|_{\infty}$ in $B_{R-1}$, for any $t\in (s,s+1)$
and some positive constant $c'_R$ independent of $k$. Letting first $k$ tend to $+\infty$ and, then, $t$ tend to $s^+$,
we deduce that $\uu$ is continuous at $t=s$ for any $x\in B_{R-1}$. Since $R\in\N$ is arbitrary, we conclude that $\uu\in C([s,T]\times\R^d;\R^m)$ and $\uu(s,\cdot)=\f$.
\end{proof}

\subsection{The case when Hypotheses \ref{hyp_comp} hold true}
Now we show that the assertions in Theorem \ref{thm:existence_solution} continue to hold if,
as an alternative to Hypotheses \ref{hyp:existence_uniqueness}, we consider Hypotheses \ref{hyp_comp}.
Note that, since we no longer assume Hypotheses  \ref{hyp:existence_uniqueness}, we can not apply
Proposition \ref{prop:max_principle} to guarantee
the uniqueness of the solution to the Cauchy problem \eqref{eq:cauchy_problem_system}.
The role of the following theorem is twofold. First, it replaces Proposition \ref{prop:max_principle},
and, combined with Theorem \ref{thm:existence_solution}, it shows that
the Cauchy problem \eqref{eq:cauchy_problem_system} admits a unique locally in time bounded classical solution $\uu$.
Secondly, it shows that, for any $\f \in C_b(\R^d;\R^m)$, the function $|\uu|^2$
can be estimated pointwise in terms of $G(t,s)|\f|^2$.

\begin{thm}
\label{point_prop}
Let us assume that Hypotheses $\ref{hyp_comp}$ are satisfied. Then, for any $s\in I$ and
any $\f\in C_b(\Rd;\R^m)$, the Cauchy problem \eqref{eq:cauchy_problem_system} admits a unique locally in time bounded classical solution
$\uu$. Moreover, $\uu\in C^{1+\alpha/2,2+\alpha}_{\rm loc}((s,+\infty)\times\Rd)$ and, for any $T>s \in I$,
there exists a positive constant $c= c(s,T)$ such that
\begin{equation}\label{pointwise}
|\uu(t,x)|^2\leq c(G(t,s)|\f|^2)(x),\qquad\;\,(t,x)\in [s,T]\times \Rd,\;\,\f\in C_b(\Rd;\R^m).
\end{equation}
\end{thm}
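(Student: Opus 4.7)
\emph{Plan.} The strategy is to run the same approximation scheme used in the proof of Theorem \ref{thm:existence_solution}, replacing Proposition \ref{prop:max_principle} by a direct comparison with the scalar evolution operator $G(t,s)$ associated to $\tilde{\mathcal A}$. For every $n\in\mathbb N$ I denote by $\uu_n$ the classical solution to the Cauchy--Dirichlet problem \eqref{p_palla} on $B_n$, whose existence is again provided by \cite[Thm. IV.5.5]{LadSolUra68Lin}.

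The heart of the matter is a pointwise differential inequality for $|\uu_n|^2$. Writing $\bm{\mathcal A}\uu_n = \tilde{\bm{\mathcal A}}\uu_n + \sum_j\tilde B_jD_j\uu_n + C\uu_n$ and expanding $\tilde{\mathcal A}|\uu_n|^2 = 2\sum_{i,j}Q_{ij}\langle D_i\uu_n,D_j\uu_n\rangle + 2\langle\uu_n,\tilde{\bm{\mathcal A}}\uu_n\rangle$, the diagonal drifts cancel and one gets
\begin{align*}
D_t|\uu_n|^2 - \tilde{\mathcal A}|\uu_n|^2
&= -2\sum_{i,j=1}^d Q_{ij}\langle D_i\uu_n,D_j\uu_n\rangle + 2\sum_{j=1}^d\langle \tilde B_j^T\uu_n, D_j\uu_n\rangle + 2\langle C\uu_n,\uu_n\rangle.
\end{align*}
The first term is bounded above by $-2\lambda_Q|\nabla\uu_n|^2$ by ellipticity; for the second, Young's inequality with parameter $2\lambda_Q$ together with the entrywise bound $|(\tilde B_j)_{hk}|\le\xi\lambda_Q^{\sigma}$ of Hypothesis \ref{hyp_comp}(i) gives $2\sum_j\langle\tilde B_j^T\uu_n,D_j\uu_n\rangle\le 2\lambda_Q|\nabla\uu_n|^2+\tfrac12dm^2\xi^2\lambda_Q^{2\sigma-1}|\uu_n|^2$, and the third is dominated by $2\Lambda_C|\uu_n|^2$. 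Adding these and invoking the definition of $H_J$ in Hypothesis \ref{hyp_comp}(ii), the gradient contributions cancel and one arrives at
\begin{equation*}
D_t|\uu_n|^2 - \tilde{\mathcal A}|\uu_n|^2 \le 2H_{[s,T]}|\uu_n|^2 \qquad\text{in }(s,T]\times B_n.
\end{equation*}
Setting $H:=H_{[s,T]}$, the function $w_n(t,x) := e^{-2H(t-s)}|\uu_n(t,x)|^2 - (G_n^{\mathcal D}(t,s)|\f|^2)(x)$ is then a classical subsolution of $D_tw = \tilde{\mathcal A}w$ in $(s,T]\times B_n$, it vanishes at $t=s$, and is nonpositive on $(s,T]\times\partial B_n$ (as $\uu_n\equiv 0$ there and $G_n^{\mathcal D}(t,s)|\f|^2\ge 0$). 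The classical parabolic maximum principle on the bounded cylinder $(s,T)\times B_n$ yields
\begin{equation*}
|\uu_n(t,x)|^2 \le e^{2H(t-s)}(G_n^{\mathcal D}(t,s)|\f|^2)(x) \le e^{2H(T-s)}\|\f\|_\infty^2,\qquad (t,x)\in[s,T]\times B_n.
\end{equation*}

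With this uniform sup-norm bound in hand I proceed verbatim as in Theorem \ref{thm:existence_solution}: the interior Schauder estimates of Theorem \ref{thm-A2}, an Ascoli--Arzel\`a argument and a diagonal extraction produce a subsequence $(\uu_{n_k})$ converging in $C^{1,2}_{\rm loc}((s,+\infty)\times\R^d;\R^m)$ to a function $\uu\in C^{1+\alpha/2,2+\alpha}_{\rm loc}((s,+\infty)\times\R^d;\R^m)$ which solves the differential equation in \eqref{eq:cauchy_problem_system}. The continuity at $t=s$ with $\uu(s,\cdot)=\f$ is obtained by the same cut-off/variation-of-constants argument carried out there, which depends only on the uniform sup-norm bound and Proposition \ref{prop-A1}. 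Letting $n_k\to+\infty$ in the comparison inequality, and using the standard scalar convergence $G_n^{\mathcal D}(t,s)|\f|^2\to G(t,s)|\f|^2$ as $n\to+\infty$ (cf.\ \cite{KunLorLun09Non}), estimate \eqref{pointwise} follows with $c=e^{2H(T-s)}$.

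For uniqueness, let $\uu_1,\uu_2$ be two locally in time bounded classical solutions sharing the same initial datum. The same algebraic computation, carried out this time directly on the whole of $(s,T)\times\R^d$ for $\uu:=\uu_1-\uu_2$, shows that $v:=e^{-2H(t-s)}|\uu|^2$ is a bounded classical subsolution of $D_tv=\tilde{\mathcal A}v$ with $v(s,\cdot)\equiv 0$. Hypothesis \ref{hyp_comp}(iii) provides a Lyapunov function for $\tilde{\mathcal A}$, which is precisely what the scalar maximum principle on $\R^d$ requires to conclude that $v\le 0$, and hence $\uu_1\equiv\uu_2$. The main obstacle throughout is the control of the cross term $2\sum_j\langle\tilde B_j^T\uu_n,D_j\uu_n\rangle$: it is the subdiffusive growth rate $\lambda_Q^\sigma$ with $\sigma<1$ in Hypothesis \ref{hyp_comp}(i), combined with the specific Young parameter $2\lambda_Q$, that allows the diffusion to absorb the full $2\lambda_Q|\nabla\uu_n|^2$ contribution, leaving only a lower-order mass-type term controlled by $H_J$.
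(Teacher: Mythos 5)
Your proof is correct and follows the same underlying idea as the paper's: the identical algebraic expansion of $D_t|\uu|^2 - \tilde{\mathcal A}|\uu|^2$, the same Young-inequality absorption of the cross term $\sum_j\langle \tilde B_j^T\uu, D_j\uu\rangle$ into the elliptic term $-2\lambda_Q|\nabla\uu|^2$ (your Young constant $2\lambda_Q$ produces exactly $\tfrac12 dm^2\xi^2\lambda_Q^{2\sigma-1}$, matching $2H_J$ given the $\tfrac14$ in the definition of $H_J$), and the same final comparison with $G(t,s)|\f|^2$ by the maximum principle.

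The one organizational difference is worth flagging. The paper proves \eqref{pointwise} directly on $\Rd$ for an arbitrary locally-in-time bounded classical solution $\uu$, i.e.\ it forms $v = e^{-2H(t-s)}|\uu|^2 - G(t,s)|\f|^2$ and invokes the scalar maximum principle of Kunze--Lorenzi--Lunardi on the whole space (which needs the Lyapunov function of Hypothesis~2.3(iii)); this single statement then delivers both the estimate and uniqueness simultaneously, while existence is furnished by the approximation scheme borrowed from Theorem~2.4. You instead establish the comparison inequality first for $\uu_n$ on the bounded ball $B_n$ using only the classical Dirichlet maximum principle, pass to the limit via $G_n^{\mathcal D}(t,s)\to G(t,s)$, and then separately re-run the same differential inequality on $\R^d$ for $\uu_1-\uu_2$ to obtain uniqueness. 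Both routes are valid and use the same hypotheses; the paper's phrasing is slightly more compressed, while yours makes more explicit where the bounded-domain argument suffices and where the whole-space Lyapunov condition is actually needed.
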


\begin{proof}
Clearly, estimate \eqref{pointwise} yields the uniqueness of the solution to problem \eqref{eq:cauchy_problem_system}. Moreover,
the arguments here below can also be applied to prove that the solution $\uu_n$ to the Cauchy problem \eqref{p_palla} satisfies \eqref{pointwise}, with $\Rd$ being replaced by $B_n$. This estimate replaces \eqref{stima-un} and allows us to repeat verbatim the proof of Theorem \ref{thm:existence_solution} getting
the existence of a classical solution $\uu$ to the problem \eqref{eq:cauchy_problem_system}.
To prove \eqref{pointwise}, we fix $\f\in C_b(\Rd;\R^m)$, $T>s\in I$ and  consider the function $v:[s,T]\times\R^d\to\R$, defined by
$v(t,x):=e^{-2H(t-s)}|\uu(t,x)|^2-(G(t,s)|\f|^2)(x)$ for any $(t,x)\in [s,T] \times \Rd$,
where $H=H_{[s,T]}$ is defined in Hypothesis \ref{hyp_comp}(ii). The function $v$ belongs to $C_b([s,T]\times \Rd) \cap C^{1,2}((s,T)\times\Rd)$ and $v(s,\cdot)\equiv 0$. Moreover, taking Hypothesis \ref{hyp_comp}(i) into account,
by a straightforward computation we get
$D_tv(t,x)=\tilde\A v(t,x)+2e^{-2H(t-s)}g(t,x)$
for any $(t,x)\in (s, T]\times \Rd$, where
$g=\sum_{i=1}^d\langle \tilde B_iD_i\uu,\uu\rangle
-{\rm Tr}(J_x\uu Q (J_x\uu)^T)+\langle C\uu,\uu\rangle-H|\uu|^2$.
From Hypothesis \ref{base}, the
Young and Cauchy-Schwarz inequalities, and Hypothesis \ref{hyp_comp}(i)
we get
\begin{align*}
g\le & -\lambda_Q|J_x\uu|^2+ 2m\xi\lambda_Q^{\sigma}|\uu|\sum_{i=1}^d|D_i\uu|+(\Lambda_C-H)|\uu|^2\\
\le & (\varepsilon dm^2\xi^2-1)\lambda_Q|J_x\uu|^2+[(4\varepsilon)^{-1}\lambda_Q^{2\sigma-1}+\lambda_C-H]|\uu|^2
\end{align*}
in $[s,T] \times \Rd$, where $\varepsilon=\varepsilon(t)$ is an arbitrary positive function.
Choosing $\varepsilon=(m^2\xi^2 d)^{-1}$ and taking Hypothesis \ref{hyp_comp}(ii)  into account, we get $D_t v-\tilde\A v\le 0$
in $(s,T]\times\Rd$.
The maximum principle in \cite[Thm. 2.1]{KunLorLun09Non} shows that
$v\leq 0$ in $[s,T]\times \Rd$, which is the claim with $c=e^{2H(T-s)}$.
\end{proof}

\begin{rmk}
\label{rem-neumann}
{\rm We stress that the arguments used in the proof of Theorem \ref{thm:existence_solution} (and, hence, in the proof of Theorem \ref{point_prop}) to guarantee the existence of a solution to problem
\eqref{eq:cauchy_problem_system} works as well if we approximate this problem by homogeneous Neumann-Cauchy problems in the ball $B_n$.
We will use this approximation in the proof of Theorem \ref{thm-comp-rev}.}
\end{rmk}

As a consequence of Theorem \ref{thm:existence_solution} (resp. Theorem \ref{point_prop}) we can associate an evolution operator $\{{\bf G}(t,s)\}_{t\geq s\in I}$ to  $\bm{\mathcal A}$ in $C_b(\Rd,\Rm)$, by setting ${\bf G}(\cdot,s)\f:=\uu$, where $\uu$ is the unique locally in time bounded classical solution to the Cauchy problem
\eqref{eq:cauchy_problem_system}. The uniqueness statement in Proposition \ref{prop:max_principle} and Theorem \ref{point_prop} yield the evolution property of the family $\{{\bf G}(t,s)\}_{t\geq s\in I}$. Moreover, the estimates
\eqref{eq:norma_condition_classical solution} and \eqref{pointwise} (together with the estimate
$\|G(t,s)\|_{\mathcal{L}(C_b(\Rd))}\le 1$), imply
that the family  is an evolution operator in $C_b(\Rd;\R^m)$.
Moreover, for any $T>s \in I$,
\begin{equation}
\|\G(t,s)\f\|_{\infty}\le c(t-s)\|\f\|_{\infty},\qquad\;\, t\in [s,T],\;\,\f\in C_b(\R^d;\R^m),
\label{stima-evol-op}
\end{equation}
with $c(t-s)= e^{\varepsilon \kappa_0(t-s)}$ (resp. $c(t-s)=e^{H_{[s,T]}(t-s)}$).

\begin{rmk}
{\rm Note that, under Hypotheses \ref{hyp:existence_uniqueness},
 estimate \eqref{stima-evol-op} holds true for any $t>s\in I$.
The same is true even if Hypotheses \ref{hyp_comp} are satisfied with $J=I$. In this latter case
 $c(t-s)=e^{H(t-s)}$, for any $t>s$ and some positive constant $H$, independent of $t$ and $s$.}
\end{rmk}

\section{Properties of the evolution family $\{{\bf G}(t,s)\}$}
Here, we investigate the main properties of the evolution family (simply denoted by) $\G(t,s)$.
All the results contained in this section hold true when at least one between Hypotheses \ref{hyp:existence_uniqueness}
and Hypotheses \ref{hyp_comp} are satisfied.
We start by proving some continuity properties of ${\bf G}(t,s)$.

\begin{prop}
Let $(\f_n)$ be a bounded sequence of functions in $C_b(\R^d;\R^m)$. Then, the following properties are satisfied:
\begin{enumerate}[\rm (i)]
\item
if $\f_n$ converges pointwise to $\f\in C_b(\R^d;\R^m)$, then
${\bf G}(\cdot,s)\f_n$ converges to ${\bf G}(\cdot,s)\f$ in $C^{1,2}(D)$ for any compact set $D\subset (s,+\infty)\times\Rd$;
\item
if $\f_n$ converges to $\f$ locally uniformly in $\R^d$, then ${\bf G}(\cdot,s)\f_n$ converges to
${\bf G}(\cdot,s)\f$ locally uniformly in $[s,+\infty)\times\Rd$.
\end{enumerate}
\label{prop:convergence_properties}
\end{prop}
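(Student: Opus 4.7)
The plan is to combine a priori sup-norm bounds with interior Schauder estimates in order to extract a subsequential $C^{1,2}_{\rm loc}$-limit, and then identify that limit with $\G(\cdot,s)\f$ by localizing to balls and passing to the limit in the variation-of-constants formula associated with the bounded-domain evolution operator $\G_R^{\mathcal D}$, exactly in the spirit of the proof of Theorem \ref{thm:existence_solution}.

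For (i), I would first set $\ww_n:=\G(\cdot,s)\f_n$ and fix $T>s$. The uniform bound $\|\ww_n\|_{C_b([s,T]\times\Rd;\R^m)}\le c(T-s)\sup_n\|\f_n\|_\infty$ coming from \eqref{stima-evol-op} feeds Theorem \ref{thm-A2} and produces a uniform $C^{1+\alpha/2,2+\alpha}(D;\R^m)$ estimate on every compact set $D\subset(s,+\infty)\times\Rd$. A diagonal Arzel\`a--Ascoli argument then extracts a subsequence $(\ww_{n_k})$ converging in $C^{1,2}$ on compact subsets of $(s,+\infty)\times\Rd$ to some $\ww$ which solves $D_t\ww=\bm{\mathcal A}\ww$ and is bounded on each strip. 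It is then enough to verify that $\ww$ is continuous at $t=s$ with $\ww(s,\cdot)=\f$: by Proposition \ref{prop:max_principle} or Theorem \ref{point_prop}, this forces $\ww=\G(\cdot,s)\f$ and the usual subsequence principle promotes the convergence to the whole sequence.

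To check the initial condition I would mimic the localization at the end of the proof of Theorem \ref{thm:existence_solution}: for $R\in\N$ pick $\vartheta_R\in C^\infty_c(\Rd)$ with $\chi_{B_{R-1}}\le\vartheta_R\le\chi_{B_R}$, set $\vv_n^R:=\vartheta_R\ww_n$, and write the variation-of-constants formula
\begin{equation*}
\ww_n(t,x)=\G_R^{\mathcal D}(t,s)(\vartheta_R\f_n)(x)+\int_s^t\G_R^{\mathcal D}(t,r)\g_n^R(r,\cdot)(x)\,dr,\qquad x\in B_{R-1},
\end{equation*}
with $\g_n^R=-{\rm Tr}(QD^2\vartheta_R)\ww_n-2(J_x\ww_n)Q\nabla\vartheta_R-\sum_{j=1}^d(B_j\ww_n)D_j\vartheta_R$. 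Proposition \ref{prop-A1} together with the uniform sup-norm bound on $(\ww_n)$ yields $|\g_n^R(t,x)|\le c_R(1+(t-s)^{-1/2})\sup_n\|\f_n\|_\infty$, uniformly in $n$. The pointwise-bounded convergence of $\f_n\to\f$, together with the $C^{1,2}_{\rm loc}$-convergence already extracted along $n_k$ and dominated convergence in $r$ (with the integrable majorant just displayed), lets me pass to the limit and obtain the analogous identity for $\ww$ on $B_{R-1}$. Since the integral term is $O(\sqrt{t-s})$ as $t\to s^+$ and $\G_R^{\mathcal D}(t,s)(\vartheta_R\f)(x)\to\vartheta_R(x)\f(x)=\f(x)$ on $B_{R-1}$ by the continuity at $t=s$ of the bounded-domain Dirichlet evolution operator, $\ww$ is continuous at $s$ with $\ww(s,\cdot)=\f$ on $B_{R-1}$; the arbitrariness of $R$ completes the identification.

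For (ii), the same identity applied to $\ww_n-\ww$ does the job on each strip $[s,T]\times B_{R-1}$: the first summand is majorized by $C_R\|\f_n-\f\|_{C(\overline B_R;\R^m)}$, which vanishes as $n\to+\infty$ by the local uniform convergence of $\f_n$, while the integral is split as $\int_s^t=\int_s^{s+\eta}+\int_{s+\eta}^t$, the first piece being $O(\eta+\sqrt{\eta})$ uniformly in $n$ by the majorant above and the second piece vanishing as $n\to+\infty$ thanks to the $C^{1,2}_{\rm loc}$-convergence from (i). Sending first $n\to+\infty$ and then $\eta\to 0^+$ gives locally uniform convergence on $[s,+\infty)\times\Rd$. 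The hard part, common to both statements, is precisely this identification at $t=s$: one cannot invoke \eqref{stima-evol-op} on $\f_n-\f$ because the data converge only pointwise (respectively locally uniformly), and the Duhamel localization---which trades the missing uniform control on $\f_n-\f$ for the integrable time-singularity $(t-s)^{-1/2}$ provided by Proposition \ref{prop-A1}---is the key mechanism that makes the passage to the limit possible.
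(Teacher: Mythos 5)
Your overall plan matches the paper's: get a subsequential $C^{1,2}_{\rm loc}$-limit $\ww$ via \eqref{stima-evol-op}, Theorem \ref{thm-A2} and Arzel\`a--Ascoli, then identify $\ww(s,\cdot)=\f$ by localizing with a Duhamel formula built on $\G_R^{\mathcal D}$ and exploiting the integrable $(t-s)^{-1/2}$ singularity. But the limit identification in part (i) has a genuine gap. After localizing to $B_R$, you must take $n\to+\infty$ in the term $\G_R^{\mathcal D}(t,s)(\vartheta_R\f_n)(x)$ while $\f_n\to\f$ only pointwise and boundedly. For a scalar Dirichlet evolution operator this follows at once from the Green's function and positivity; for a \emph{system} $\G_R^{\mathcal D}$ carries no positivity, and one must first invoke the Riesz representation theorem to produce finite \emph{signed} Borel measures $p_{ij}^R(t,s,x,dy)$ with
\begin{equation*}
(\G_R^{\mathcal D}(t,s)\g)_i(x)=\sum_{j=1}^m\int_{\Rd}g_j(y)\,p_{ij}^R(t,s,x,dy),
\end{equation*}
and then apply dominated convergence to their positive and negative parts. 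This is exactly the step the paper carries out, and it is what your phrase ``lets me pass to the limit'' tacitly assumes. Without it, subtraction in the Duhamel identity only tells you that $\G_R^{\mathcal D}(t,s)(\vartheta_R\f_{n_k})(x)$ converges to \emph{something}; you cannot identify that something as $\G_R^{\mathcal D}(t,s)(\vartheta_R\f)(x)$, yet that identification is what you use to invoke continuity of $\G_R^{\mathcal D}(\cdot,s)(\vartheta_R\f)$ at $t=s$. (In part (ii) the convergence is locally uniform, so this term is harmless and your splitting of the time integral works.)

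A second, more minor, issue: you invoke Proposition \ref{prop-A1} directly for $\ww_n=\G(\cdot,s)\f_n$ to get $|\g_n^R(t,x)|\le c_R(1+(t-s)^{-1/2})\sup_n\|\f_n\|_\infty$. But Proposition \ref{prop-A1} carries the qualitative hypothesis that $t\mapsto(t-s)\|\ww_n(t,\cdot)\|_{C^2_b(\Omega;\R^m)}$ is bounded near $t=s$, which is not known a priori for the whole-space solution. The paper sidesteps this by applying Proposition \ref{prop-A1} to the ball approximants $\G_m^{\mathcal D}(\cdot,s)\f_n$ (for which the hypothesis is classical), with constants independent of $m$, and then letting $m\to+\infty$ to obtain the estimate and the inequality \eqref{mensa-massi} for $\G(\cdot,s)\f_n$. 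Your argument can be repaired along the same lines, but as written the application of Proposition \ref{prop-A1} is circular.
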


\begin{proof}
(i) From the inequality \eqref{stima-evol-op} and the interior Schauder estimates in Theorem \ref{thm-A2},
we deduce that $\sup_{n\in\N}\|{\bf G}(\cdot,s)\f_n\|_{C^{1+\alpha/2,2+\alpha}(D)}<+\infty$ for any compact set $D\subset (s,+\infty)\times\Rd$.
Therefore, using the same arguments as in the proof of Theorem \ref{thm:existence_solution}, we can prove that
there exists a function $\vv\in C^{1+\alpha/2,2+\alpha}_{\rm loc}((s,+\infty)\times\Rd)$ and a subsequence
${\bf G}(\cdot,s)\f_{n_k}$ which converges to $\vv$ in $C^{1,2}(D)$ as $k\to +\infty$, for any $D$ as above.
Clearly, $D_t\vv-\bm{\mathcal A}\vv=0$ in $(s,+\infty)\times\Rd$.

To complete the proof, we need to show that $\vv$ can be extended by continuity on $\{s\}\times\Rd$ and
$\vv(s,\cdot)\equiv\f$. Indeed, once this property is proved, we can conclude that $\vv$ is a local in time bounded classical solution to
problem \eqref{eq:cauchy_problem_system}. Hence, by uniqueness,
we conclude that $\vv\equiv {\bf G}(\cdot,s)\f$.
Since this argument can be applied to any subsequence of $({\bf G}(\cdot,s)\f_n)$ which converges in
$C^{1,2}((s,+\infty)\times\Rd)$, and the limit is ${\bf G}(\cdot,s)\f$, we conclude that the whole
sequence $({\bf G}(\cdot,s)\f_n)$ converges to ${\bf G}(\cdot,s)\f$ locally uniformly in $(s,+\infty)\times\Rd$.

To prove that $\vv$ can be extended by continuity at $t=s$, we fix $m, M\in\N$, with $m>M$.
From the proof of Theorem \ref{thm:existence_solution} and recalling that
$\sup_{n\in\N}\|\f_n\|_{\infty}<+\infty$, we deduce that
$|({\bf G}_m^{\mathcal D}(t,s)\f_n)(x)-\f_n(x)|\le |{\bf G}_M^{\mathcal D}(t,s)(\vartheta\f_n)(x)-\vartheta(x)\f_n(x)|
+c_M\sqrt{t-s}$
for any $(t,x)\in (s,s+1)\times B_{M-1}$, and some positive constant $c_M$ independent of $m$,
Thus, letting $m\to +\infty$ we conclude that
\begin{equation}
|({\bf G}(t,s)\f_n)(x)-\f_n(x)|\le |{\bf G}_M^{\mathcal D}(t,s)(\vartheta\f_n)(x)-\vartheta(x)\f_n(x)|
+c_M\sqrt{t-s},
\label{mensa-massi}
\end{equation}
for any $(t,x)\in (s,s+1)\times B_{M-1}$.
Next step consists in letting $n\to +\infty$. Clearly, the left-hand side of
\eqref{mensa-massi} converges to $|\vv(t,x)-\f(x)|$ for any $(t,x)\in (s,+\infty)\times\Rd$.
As far as the right-hand side is concerned, we observe that Riesz's representation theorem (see \cite[Rem. 1.57]{AmbFusPal}) shows
that there exists a family $\{p_{ij}^M(t,s,x,dy): t>s,\, x\in B_M,\, i,j=1,\ldots m\}$
of Borel finite measures such that
\begin{eqnarray*}
({\bf G}_M^{\mathcal D}(t,s)\g)_i(x)=\sum_{j=1}^m\int_{\Rd}g_j(y)p_{ij}^M(t,s,x,dy),\qquad\;\,\g\in C_c(B_M;\R^m),
\end{eqnarray*}
for any $t>s$, $x\in\Rd$, $i=1,\ldots,m$.
Since each function $\vartheta\f_n$ is compactly supported in $B_M$, from the previous representation
formula it follows that
${\bf G}_M^{\mathcal D}(\cdot,s)(\vartheta\f_n)$ converges to
${\bf G}_M^{\mathcal D}(\cdot,s)(\vartheta\f)$ pointwise in $[s,+\infty)\times\Rd$, as $n\to +\infty$.
Hence, we can take the limit in \eqref{mensa-massi} and conclude that
$|\vv(t,\cdot)-\f|\le |{\bf G}_M^{\mathcal D}(t,s)(\vartheta\f)-\vartheta\f|+c_M\sqrt{t-s}$ in $B_{M-1}$, for any $t\in (s,s+1)$, which implies that
$\vv$ can be extended by continuity to $\{s\}\times B_{M-1}$ by setting ${\bf v}(s,\cdot)=\f$, since the function ${\bf G}_M^{\mathcal D}(\cdot,s)(\vartheta\f)$
is continuous in $[s,+\infty)\times B_M$. The arbitrariness of $M$ allows us to complete the proof.

(ii) Fix $T>s\in I$. In view of property (i), we just need to prove that, for any compact set $K\subset\Rd$ and $\varepsilon>0$,
there exists $\delta \in (0,1)$ such that
\begin{equation}
\limsup_{n\to +\infty}\|{\bf G}(\cdot,s)\f_n-{\bf G}(\cdot,s)\f\|_{C([s,s+\delta]\times K;\R^m)}\le\varepsilon.
\label{emma}
\end{equation}
For this purpose, we fix $M\in\N$ such that $K\subset B_{M-1}$. Taking first the supremum over $B_{M-1}$ in both the sides of \eqref{mensa-massi} and, then,
the limsup as $n\to +\infty$, we conclude that
\begin{align*}
\limsup_{n\to +\infty}\|{\bf G}(\cdot,s)\f_n-\f_n\|_{C([s,s+\delta]\times\overline{B}_{M-1};\R^m)}\le
\|{\bf G}_M^{\mathcal D}(\cdot,s)(\vartheta\f)-\vartheta\f\|_{C([s,s+\delta]\times\overline{B}_{M-1};\R^m)}+c_M\sqrt{\delta}.
\end{align*}
Indeed, since $\vartheta\f_n$ tends to $\vartheta\f$, uniformly in $B_M$, from \eqref{stima-un}
${\bf G}_M^{\mathcal D}(\cdot,s)(\vartheta\f_n)$ converges to ${\bf G}_M^{\mathcal D}(\cdot,s)(\vartheta\f)$ uniformly in $[s,s+1]\times\overline{B}_{M-1}$.
Finally, splitting
${\bf G}(\cdot,s)\f_n-{\bf G}(\cdot,s)\f={\bf G}(\cdot,s)\f_n-\f_n+(\f_n-\f)+\f
-{\bf G}(\cdot,s)\f$, using the above estimate and recalling that $\f_n$ tends to $\f$, locally uniformly in $\Rd$, we deduce that
\begin{align*}
&\limsup_{n\to +\infty}\|{\bf G}(\cdot,s)\f_n-{\bf G}(\cdot,s)\f\|_{C([s,s+\delta]\times\overline{B}_{M-1};\R^m)}\\
\le &\|{\bf G}_M^{\mathcal D}(\cdot,s)(\vartheta\f)-\vartheta\f\|_{C([s,s+\delta]\times\overline{B}_{M-1}\R^m)}
+c_M\sqrt{\delta}+\|{\bf G}(\cdot,s)\f-\f\|_{C([s,s+\delta]\times\overline{B}_{M-1};\R^m)}.
\end{align*}
Since the functions ${\bf G}_M^{\mathcal D}(\cdot,s)(\vartheta\f)$ and ${\bf G}(\cdot,s)\f$ are continuous in
$[s,s+1]\times\overline{B}_{M-1}$, from
the previous estimate, it follows immediately that \eqref{emma} holds true.
\end{proof}

In the following theorem we prove that the evolution operator ${\bf G}(t,s)$ can be extended to the set of all the bounded
Borel measurable functions $\f:\Rd\to\Rm$. This is a consequence of the fact that for any $\f\in C_b(\Rd;\R^m)$ each component of ${\bf G}(t,s)\f$,
admits an integral representation formula in terms of some finite Borel measures. These measures
are absolutely continuous with respect to the Lebesgue measure but, in general, differently from the scalar case,
they are signed measures.

\begin{thm}
\label{prop-strong}
There exists a family $\{p_{ij}(t,s,x,dy): t>s\in I,\, x\in\Rd,\, i,j=1,\ldots, m\}$
of finite Borel measures, which are absolutely continuous with respect to the Lebesgue measure, such that
formula \eqref{eq:inte_intro} holds true for any $t>s$, $x\in\Rd$, $i=1,\ldots,m$. Moreover, through formula \eqref{eq:inte_intro}, the evolution operator $\G(t,s)$ extends to $B_b(\Rd;\R^m)$ with
a strong Feller evolution operator. Actually, $\G(\cdot,s)\f\in C^{1+\alpha/2,2+\alpha}_{\rm loc}((s,+\infty)\times\Rd;\R^m)$ for any $\f\in B_b(\Rd;\R^m)$ and $s\in I$.
\end{thm}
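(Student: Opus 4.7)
The starting point of the proof is a Riesz-type representation. Fix $t>s\in I$, $x\in\R^d$ and $i,j\in\{1,\ldots,m\}$, and consider the linear functional $\Lambda_{ij}^{t,s,x}\colon C_b(\R^d)\to\R$ defined by $\Lambda_{ij}^{t,s,x}(f):=(\G(t,s)(f{\bf e}_j))_i(x)$. Estimate \eqref{stima-evol-op} ensures $|\Lambda_{ij}^{t,s,x}(f)|\le c(t-s)\|f\|_\infty$, while Proposition \ref{prop:convergence_properties}(i) guarantees that $\Lambda_{ij}^{t,s,x}(f_n)\to\Lambda_{ij}^{t,s,x}(f)$ whenever $(f_n)\subset C_b(\R^d)$ is uniformly bounded and converges pointwise to $f$. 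The functional is therefore $\sigma$-additive along dominated monotone sequences, and a standard Daniell--Stone/Riesz extension produces a unique finite signed Borel measure $p_{ij}(t,s,x,\cdot)$ representing $\Lambda_{ij}^{t,s,x}$. Summing over $j$ yields \eqref{eq:inte_intro} on $C_b(\R^d;\R^m)$, with total variations $|p_{ij}|(t,s,x,\R^d)\le c(t-s)$.

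Formula \eqref{eq:inte_intro} is then used to extend $\G(t,s)$ to every $\f\in B_b(\R^d;\R^m)$, with the norm bound \eqref{stima-evol-op} preserved. To prove the regularity assertion (which in particular gives the strong Feller property), fix such an $\f$ and a uniformly bounded sequence $(\f_n)\subset C_b(\R^d;\R^m)$ converging to $\f$ pointwise (for instance, truncated mollifications). Dominated convergence applied to each signed measure $p_{ij}(t,s,x,\cdot)$ gives $\G(t,s)\f_n\to\G(t,s)\f$ pointwise on $(s,+\infty)\times\R^d$, while Proposition \ref{prop:convergence_properties}(i) combined with the interior Schauder estimates of Theorem \ref{thm-A2} yields convergence of $(\G(\cdot,s)\f_n)$ in $C^{1,2}(D)$, with uniform $C^{1+\alpha/2,2+\alpha}$-bounds, on every compact $D\subset(s,+\infty)\times\R^d$. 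Identifying the two limits, $\G(\cdot,s)\f\in C^{1+\alpha/2,2+\alpha}_{\rm loc}((s,+\infty)\times\R^d;\R^m)$ and solves $D_t{\bf v}=\bm{\mathcal A}{\bf v}$ classically; in particular $\G(t,s)\f$ is continuous in $x$ for every $t>s$. The evolution property $\G(t,r)\G(r,s)=\G(t,s)$ on $B_b$ follows by the same pointwise approximation.

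The main obstacle is absolute continuity, which is delicate because the measures $p_{ij}$ are signed. Under Hypotheses \ref{hyp_comp} the plan is to exploit the pointwise estimate \eqref{pointwise}. For any open set $U\subset\R^d$ and every $\phi\in C_c(U)$ with $\|\phi\|_\infty\le 1$, applying \eqref{pointwise} to $\f=\phi{\bf e}_j$ gives
\[
|(\G(t,s)(\phi{\bf e}_j))_i(x)|^2\le c\,(G(t,s)\phi^2)(x)\le c\int_U g(t,s,x,y)\,dy,
\]
where $g(t,s,x,\cdot)$ denotes the scalar transition density of $G(t,s)$, known from \cite{KunLorLun09Non} to be absolutely continuous with respect to the Lebesgue measure. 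Taking the supremum over $\phi$ and invoking the duality characterization of the total variation for regular signed Borel measures, we infer $|p_{ij}|(t,s,x,U)^2\le c\int_U g(t,s,x,y)\,dy$, which tends to $0$ as $|U|\to 0$; outer regularity then forces $|p_{ij}|(t,s,x,E)=0$ for every Borel $E$ of zero Lebesgue measure. Under Hypotheses \ref{hyp:existence_uniqueness} the pointwise bound \eqref{pointwise} is unavailable, and the approach is instead to introduce the matrix-valued Green's functions $\Gamma_n$ of the Dirichlet evolution operators $\G_n^{\mathcal D}(t,s)$ on $B_n$, whose existence and Gaussian-type pointwise bounds follow from the classical theory for parabolic systems on smooth bounded domains; the crucial (and delicate) point is to prove that these bounds are uniform in $n$ on each fixed compact set, after which the analogous $|U|$-small estimate passes to the limit by the locally uniform convergence $\G_n^{\mathcal D}(t,s)\f\to\G(t,s)\f$ furnished by Theorem \ref{thm:existence_solution}.
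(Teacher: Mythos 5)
Your Riesz-representation step and the absolute-continuity argument under Hypotheses \ref{hyp_comp} are attractive and genuinely different from the paper: by applying \eqref{pointwise} to $\phi\,{\bf e}_j$ with $\phi\in C_c(U)$, $\|\phi\|_\infty\le 1$, you get $|p_{ij}|(t,s,x,U)^2\le c\int_U g(t,s,x,y)\,dy$ directly, and then kill Lebesgue-null sets by outer regularity. The paper instead proves the key intermediate bound $|\G(t,s)(f{\bf e}_j)|\le G_{M,0}^{\mathcal D}(t,s)|f|+c\sqrt{t-s}\,\|f\|_\infty$ by a variation-of-constants/Gronwall argument inside balls and then uses the Hahn decomposition together with Proposition \ref{prop:max_principle}; your route under Hypotheses \ref{hyp_comp} is shorter and avoids the Duhamel machinery.

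There are, however, two genuine gaps. First, your regularity/strong Feller argument asserts that any $\f\in B_b(\Rd;\R^m)$ is the \emph{pointwise-everywhere} limit of a uniformly bounded sequence $(\f_n)\subset C_b(\Rd;\R^m)$ (``for instance, truncated mollifications''). This is false: pointwise-everywhere limits of continuous functions give only Baire class~$1$, not all Borel functions, and mollifications give only Lebesgue-a.e. convergence, which is useless until you already know the $p_{ij}$ are absolutely continuous --- precisely what you prove only afterwards and only under Hypotheses \ref{hyp_comp}. This is exactly the difficulty the paper sidesteps with the transfinite induction over the Baire hierarchy (Step~1 of its proof): Proposition \ref{prop:convergence_properties}(i) handles one level of pointwise-everywhere limits, and induction over ordinals $\eta<\omega_1$ propagates the conclusion to all of $B_b(\Rd)$. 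Without that (or without first establishing absolute continuity and then reordering the argument and invoking a.e.\ approximation), the extension of $\G(t,s)$ and its regularity on $B_b$ is unsupported.

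Second, your argument under Hypotheses \ref{hyp:existence_uniqueness} is a plan, not a proof: you defer to ``Gaussian-type pointwise bounds'' for the Dirichlet Green matrices $\Gamma_n$ that are ``uniform in $n$'', and you yourself call this ``the crucial (and delicate) point''. Such uniform kernel bounds for coupled first-order systems with unbounded coefficients are not available off the shelf and are not established here. The paper's proof does not need them; it proves estimate \eqref{eq:davide3} uniformly in $M$ by a direct Duhamel--Gronwall estimate on $\G_M^{\mathcal D}(t,s)$, compares with the scalar heat-type operator $G_{M,0}^{\mathcal D}$ (which has a known density), and then concludes via the Hahn decomposition and the uniqueness principle. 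You need either to carry that estimate through yourself or supply the uniform kernel bounds you invoke.
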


\begin{proof}
Throughout the proof, $s$ is arbitrarily fixed in $I$.
Since, for any $(t,x)\in (s,+\infty)\times\R^d$, the map
$\f\mapsto ({\bf G}(t,s)\f)(x)$
is bounded from $C_0(\Rd;\R^m)$ into $\R^m$, from the Riesz's Representation Theorem (see e.g., \cite[Rem. 1.57]{AmbFusPal}) it follows that there exists a family
$\{p_{ij}(t,s,x,dy): t>s\in I,\, x\in\Rd,\, i,j=1,\ldots m\}$ of finite Borel measures such that
\eqref{eq:inte_intro} is satisfied by any $\f\in C_0(\Rd;\R^m)$.
To extend the previous formula to any $\f\in C_b(\Rd;\R^m)$, it suffices to approximate such an $\f$, locally uniformly in $\Rd$, by a sequence $(\f_n)\subset C_0(\R^d;\R^m)$, write \eqref{eq:inte_intro}, with $\f$ being replaced by $\f_n$, and use both Proposition \ref{prop:convergence_properties}$(ii)$ and the dominated convergence theorem, applied to the positive and negative
parts of the measures $p_{ij}(t,s,x,dy)$, to let $n$ tend to $+\infty$.
Clearly, formula \eqref{eq:inte_intro} allows us to extend the evolution operator $\G(t,s)$ to $B_b(\R^d;\R^m)$.

Let us now prove that each measure $p_{ij}(t,s,x,dy)$ is absolutely continuous with respect to the Lebesgue measure.
Equivalently, we prove that, for any $(t,x)\in (s,+\infty)\times\Rd$ and any $i,j=1,\ldots,m$,
the positive and negative parts of $p_{ij}(t,s,x,dy)$ are
absolutely continuous with respect to the Lebesgue measure.
For this purpose, we recall that, by the Hahn decomposition theorem (see e.g., \cite[Thm. 6.14]{Rudin}), for any $(t,x)\in (s,+\infty)\times\Rd$ there exist two Borel
sets $P=P(t,s,x)$ and $N=N(t,s,x)$ such that the maps
$p_{ij}^+(t,s,x,dy)$ and $p_{ij}^-(t,s,x,dy)$, defined, respectively, by
$p_{ij}^+(t,s,x,A)=p_{ij}(t,s,x,A\cap P)$ and $p_{ij}^-(t,s,x,A)=-p_{ij}(t,s,x,A\cap N)$ for any Borel set $A\subset\Rd$, are positive measures and
$p_{ij}(t,s,x,dy)=p_{ij}^+(t,s,x,dy)-p_{ij}^-(t,s,x,dy)$.

Being rather long, we split the proof into several steps.

{\em Step 1.} We claim  that, for any $f\in B_b(\Rd)$ and $j=1,\ldots,m$, the function $\G(\cdot,s)(f{\bf e}_j)$ belongs to $C^{1+\alpha/2,2+\alpha}_{\rm loc}((s,+\infty)\times\Rd;\R^m)$,
$D_t\G(\cdot,s)(f{\bf e}_j)=\bm{\mathcal A}\G(\cdot,s)(f{\bf e}_j)$ in $(s,+\infty)\times\Rd$ and $\|\G(t,s)(f{\bf e}_j)\|_{\infty}\le c_{s,T}\|f\|_{\infty}$ for any $t \in (s, T]$ and any $T>s$, where $c_{s,T}$ is the constant in \eqref{stima-evol-op}.
Clearly, since $\G(\cdot,s)\f=\sum_{j=1}^m\G(\cdot,s)(f_j{\bf e}_j)$ for any $\f\in B_b(\Rd,\R^m)$, the claim implies that the function $\G(\cdot,s)\f$ enjoys the regularity properties in the statement and $\|\G(t,s)\f\|_{\infty}\le \sqrt{m}c_{s,T}\|\f\|_{\infty}$ for any $t\in (s,T]$.\footnote{This estimate will be improved in Corollary \ref{coro-strong}, removing
the constant $\sqrt{m}$.}

To prove the claim, we begin by recalling that the space $B(\Rd)$ of all the real valued Borel functions coincides with the set $B^{\omega_1}(\Rd)=\bigcup_{\eta<\omega_1}B^{\eta}(\Rd)$,
where, throughout this step, we denote by $\eta$ the ordinal numbers and $\omega_1$ is the first nonnumerable ordinal number. The sets $B^{\eta}(\Rd)$ are defined as follows:
$B^0(\Rd)=C(\Rd)$ and, if $\eta>0$, the definition of $B^{\eta}(\Rd)$ depends on the fact that $\eta+1$ is a successor ordinal or not. In the first case, $B^{\eta}(\Rd)$ is the set of the pointwise limits, everywhere in $\Rd$, of  sequences of functions in $B^{\eta-1}(\Rd)$; in the second one, $B^{\eta}(\Rd)=\bigcup_{\eta_0<\eta}B^{\eta_0}(\Rd)$.
Hence, any Borel function belongs to $B^{\eta}(\Rd)$ for some ordinal less than $\omega_1$. We refer the reader
to \cite[Chpt. 30]{Kuratowski} \cite[Introduction]{Morayne} and \cite{VanrooijSchik} for further details.

We fix $j\in\{1,\ldots,m\}$ and, for any ordinal $\eta<\omega_1$, we denote by ${\mathcal P}_j(\eta)$ the set of all the functions $f\in B^{\eta}_b(\Rd)$ which satisfy the claim, where, as usually, the subscript ``$b$'' means that we are considering bounded functions.
We use the transfinite induction to prove that ${\mathcal P}_j(\eta)=B^{\eta}_b(\Rd)$ for any ordinal less than $\omega_1$.
In view of Theorem \ref{thm:existence_solution}, ${\mathcal P}_j(0)=B^0_b(\Rd)=C_b(\Rd)$.
Fix now an ordinal $\eta$ and suppose that ${\mathcal P}_j(\beta)=B_b^{\eta}(\Rd)$ for any ordinal $\beta\le\eta$.
We first assume that $\eta+1$ is a successor ordinal.
In such a case, $f$ is the pointwise limit, everywhere in $\Rd$, of a sequence $(f_n)\in B_b^{\eta}(\Rd)$. By assumptions, $f$ is bounded; hence, up to replacing
$f_n$ by $f_n\wedge \|f\|_{\infty}$, which still belongs to $B^{\eta}_b(\Rd)$,
we can assume that $\|f_n\|_{\infty}\le \|f\|_{\infty}$ for any $n\in\N$.
Since $\G(\cdot,s)(f_n{\bf e}_j)\in C^{1+\alpha/2,2+\alpha}_{\rm loc}((s,+\infty)\times\Rd;\R^m)$ for any $n\in\N$, using the interior Schauder estimates in Theorem \ref{thm-A2},
as in the proof of Theorem \ref{thm:existence_solution}, we can prove that, up to a subsequence, ${\bf G}(t,s)(f_n{\bf e}_j)$
converges in $C^{1,2}(K;\R^m)$, for any compact set $K\subset (s,+\infty)\times\Rd$, to a function $\vv\in C^{1+\alpha/2,2+\alpha}_{\rm loc}((s,+\infty)\times\Rd;\R^m)$,
which solves the equation $D_t\vv=\bm{\mathcal A}\vv$ in $(s,+\infty)\times\Rd$ and satisfies
the estimate  $\|\vv(t,\cdot)\|_{\infty}\le c_{s,T}\|f\|_{\infty}$ for any $t \in (s, T]$ and any $T>s$.
The representation formula \eqref{eq:inte_intro} reveals that $\vv={\bf G}(\cdot,s)(f{\bf e}_j)$; hence, $f\in B^{\eta+1}_b(\Rd)$.
Suppose now that $\eta+1$ is a limit ordinal. Then, $f\in B^{\beta}_b(\Rd)$ for some ordinal $\beta$ less than $\eta+1$.
Since ${\mathcal P}_j(\beta)=B^{\beta}_b(\Rd)$, it is clear that $f\in {\mathcal P}_j(\eta+1)$, and we are done also in this case.

{\em Step 2.} Now, we prove that, for any $M>0$, there exists a positive constant $c$, depending on $M$, but being independent of $t$ and $f\in C_c(B_M)$, such that
\begin{align}
|(\G_M^{\mathcal D}(t,s)(f{\bf e}_j))(x)|\le |(G_{M,0}^{\mathcal D}(t,s)f)(x)|+c\sqrt{t-s}\,\|f\|_{\infty},\qquad\;\,(t,x)\in (s,s+1)\times B_M.
\label{dec-14-12-2}
\end{align}
for any $j\in\{1,\ldots,m\}$.
Here, $G_{M,0}^{\mathcal D}(t,s)$ denotes the evolution operators associated
with the realization of the operator $\A_0$ in $C_b(B_M)$, with homogeneous Dirichlet boundary conditions.

In the rest of the proof, we denote by $c$ a positive constant, which is independent of $t\in (s,s+1)$, $x\in B_M$ and may vary from line to line.

Fix $f\in C_c(B_M)$, $j\in\{1,\ldots,m\}$, set $\uu=\G_M^{\mathcal D}(\cdot,s)(f{\bf e}_j)$ and observe that
\begin{equation}
\uu(t,x)=(G_{M,0}^{\mathcal D}(t,s)f)(x){\bf e}_j+\int_s^t(\G_{M,0}^{\mathcal D}(r,s)\g(r,\cdot))(x)dr,\qquad\;\,(t,x)\in (s,s+1)\times B_M,
\label{dec-14-12-0}
\end{equation}
where $(\G_{M,0}^{\mathcal D}(t,s)\h)_k=G_{M,0}^{\mathcal D}(t,s)h_k$, for any $k=1,\ldots,m$ and any vector valued function $\h$, and
$\g=\sum_{i=1}^dB_jD_j\uu+C\uu$.
Differentiating both sides of \eqref{dec-14-12-0}, taking the
norms and using the estimate $\|G_{M,0}^{\mathcal D}(t,s)\psi\|_{C_b^1(B_M)}\le c(t-s)^{-1/2}\|\psi\|_{\infty}$, which holds true for any $\psi\in C_b(B_M)$ and any $t\in (s,s+1)$ (see \cite[Thm. 4.6.3]{Fri64Par}),
we can estimate
\begin{align*}
\|J_x\uu(t,\cdot)\|_{C_b(B_M;\R^m)}
\le\frac{c}{\sqrt{t-s}}\|f\|_{\infty}+c\int_s^t\frac{1}{\sqrt{r-s}}\|J_x\uu(r,\cdot)\|_{C_b(B_M;\R^m)}dr,\qquad\;\,t\in (s,s+1).
\end{align*}
To get this estimate we also took advantage of the
fact that $\|\uu(t,\cdot)\|_{\infty}\le c\|f\|_{\infty}$ for any $t\in (s,s+1)$. The generalized Gronwall lemma (see \cite{Gronwall}) shows that
$\|J_x\uu(t,\cdot)\|_{C_b(B_M;\R^m)}\le c(t-s)^{-1/2}\|f\|_{\infty}$ for any $t\in (s,s+1)$.
We thus deduce that $\|\g(t,\cdot)\|_{C_b(B_M;\R^m)}\le c(t-s)^{-1/2}\|f\|_{\infty}$ for any $t\in (s,s+1)$ and, from \eqref{dec-14-12-0}, estimate \eqref{dec-14-12-2} follows at once.

{\em Step 3.} Here, we prove that, for any Borel set ${\mathcal O}\subset\Rd$ with zero Lebesgue measure, any $M>0$, $j\in\{1,\ldots,m\}$
and $t\in(s,s+1)$, it holds that
\begin{equation}
|({\bf G}(t,s)\chi_{\mathcal O}{\bf e}_j)(x)|\le c\sqrt{t-s},\qquad\;\,t\in (s,s+1),\;\,x\in B_{M/2}.
\label{eq:davide2}
\end{equation}
This inequality follows once we prove that
\begin{align}
|(\G(t,s)(f{\bf e}_j))(x)|\le |(G_{M,0}^{\mathcal D}(t,s)|f|)(x)|+c\sqrt{t-s}\,\|f\|_{\infty},
\label{eq:davide3}
\end{align}
for any $t$, $x$, $f$ and $j$ as above.
Indeed, it is well known that $G_{M,0}^{\mathcal D}(t,s)$ admits an integral representation (see
\cite[Thm. 3.16]{Fri64Par}) and this implies that $G_{M,0}^{\mathcal D}(t,s)$ can be extended to any function $f\in B_b(B_M)$ and $G_{M,0}^{\mathcal D}(t,s)\chi_{\mathcal O}=0$,
if ${\mathcal O}$ has null Lebesgue measure.

We first prove \eqref{eq:davide3} for functions $f\in C_b(\Rd)$.
For this purpose, we
fix $M>0$ and a function $\vartheta\in C_c^\infty(\Rd)$ such that $\chi_{B_{M/4}}\leq\vartheta\leq\chi_{B_{M/2}}$.
By the proof of Theorem \ref{thm:existence_solution}, ${\bf G}(\cdot,s)(f{\bf e}_j)$ is the local uniform limit in
$(s,+\infty)\times\R^d$ of the unique classical solution $\uu_n$ to the Cauchy problem \eqref{p_palla}, with $\f$ being replaced by $\vartheta f{\bf e}_j$,
Moreover, $\|{\bf G}_M^{\mathcal D}(t,r)\g_n(r,\cdot)\|_{L^\infty(B_M)}\le c_1\|\g_n(r,\cdot)\|_{L^{\infty}(B_{M/2})}
\le c_M\|f\|_{\infty}(1+(r-s)^{-\frac{1}{2}})$ in $B_{M-1}$, for any $r\in (s,t)$. Letting $n$ tend to $+\infty$, estimate \eqref{eq:davide3} follows
recalling that $|G_{M,0}^{\mathcal D}(t,s)(\vartheta f)|\le G_{M,0}^{\mathcal D}(t,s)|\vartheta f|\le G_{M,0}^{\mathcal D}(t,s)|f|$ in $B_M$.

By transfinite induction,  arguing as in Step 1, we extend \eqref{eq:davide3}
to any $f\in B_b(\Rd)$. To make the induction work, it suffices to observe that, if $f\in B_b(\Rd)$ is the pointwise limit everywhere in $\Rd$ of a sequence $(f_n)\subset B_b(\Rd)$ of functions which satisfy \eqref{eq:davide3} and $\|f_n\|_{\infty}\le \|f\|_{\infty}$ for any $n\in\N$, then, $f$ satisfies \eqref{eq:davide3} as well.
This can be seen, writing \eqref{eq:davide3} with $f$ being replaced by $f_n$, taking \eqref{eq:inte_intro} into account and letting $n\to +\infty$.

{\em Step 4.} We can now complete the proof. We fix $i,j\in \{1,\ldots,m\}$, $t_0>s$, $x_0\in\Rd$ and a Borel set $A$ with null Lebesgue measure.
Then, estimate \eqref{eq:davide2} shows  that
$|{\bf G}(t,s)(\chi_{A\cap R}{\bf e}_j)|\le c\sqrt{t-s}$ in $B_{M/2}$ for any $M>0$ and $t\in (s,s+1)$, where $R=P$ or $R=N$.
By the arbitrariness of $M$, it thus follows that ${\bf G}(t,s)(\chi_{A\cap R}{\bf e}_j)$
vanishes, locally uniformly in $\Rd$, as $t\to s^+$. Step 1 shows that the function $\vv$, defined by $\vv(s,\cdot)={\bf 0}$ and $\vv(t,\cdot)=\G(t,s)(\chi_{A\cap R}{\bf e}_j)$,
if $t>s$, belongs to $C^{1+\alpha/2,2+\alpha}_{\rm loc}((s,+\infty)\times\Rd;\R^m)\cap C([s,+\infty)\times\Rd;\R^m)$ and is bounded in each strip $[s,T]\times\Rd$. Moreover,
$D_t\vv=\bm{\mathcal A}\vv$ in $(s,+\infty)\times\Rd$ and
$\vv(s,\cdot)={\bf 0}$ in $\Rd$.
By Proposition \ref{prop:max_principle}, it follows that $\vv\equiv {\bf 0}$ in $(s,+\infty)\times\Rd$. Thus, we conclude that
$(\G(\cdot,s)(\chi_{A\cap R}{\bf e}_j))(x_0)={\bf 0}$, which implies that
$0=(\G(t_0,s)(\chi_{A\cap P}{\bf e}_j))_i(x_0)=p_{ij}^+(t_0,s,x_0,A)$ and, similarly, $p_{ij}^-(t_0,s,x_0,A)=0$.
\end{proof}

\begin{coro}
\label{coro-strong}
The following properties are satisfied.
\begin{enumerate}[\rm (i)]
\item
Estimate \eqref{stima-evol-op} is satisfied by any $\f\in B_b(\Rd;\R^m)$.
\item
Proposition $\ref{prop:convergence_properties}(i)$ holds true for any bounded sequence $(\f_n)$ of Borel
functions which converges pointwise $($almost everywhere in $\Rd)$ to a Borel measurable function $\f$.
\end{enumerate}
\end{coro}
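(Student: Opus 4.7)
The plan is to treat the two parts in order, since part (ii) will lean on part (i).

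For \textbf{(i)}, given $\f\in B_b(\Rd;\R^m)$ my strategy is to approximate $\f$ by continuous functions and then transfer the estimate \eqref{stima-evol-op} to the limit using the integral representation. More precisely, I would invoke Lusin's theorem componentwise, together with a truncation at height $\|\f\|_\infty$, to produce a sequence $(\f_n)\subset C_b(\Rd;\R^m)$ satisfying $\|\f_n\|_\infty\le\|\f\|_\infty$ and $\f_n\to\f$ pointwise almost everywhere in $\Rd$. For each $n$, the already-established estimate \eqref{stima-evol-op} gives $|\G(t,s)\f_n(x)|\le c(t-s)\|\f\|_\infty$ for every $(t,x)\in[s,T]\times\Rd$. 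To pass to the limit I use formula \eqref{eq:inte_intro}, which, applied componentwise, reads
\[
(\G(t,s)\f_n)_i(x)=\sum_{j=1}^m\int_{\Rd}f_{n,j}(y)\,p_{ij}(t,s,x,dy).
\]
Since Theorem \ref{prop-strong} guarantees that each $p_{ij}(t,s,x,dy)$ is absolutely continuous with respect to the Lebesgue measure, $\f_n\to\f$ also almost everywhere with respect to the positive and negative parts of $p_{ij}(t,s,x,dy)$. The dominated convergence theorem, applied to $p_{ij}^{+}$ and $p_{ij}^{-}$ separately, then yields $(\G(t,s)\f_n)_i(x)\to(\G(t,s)\f)_i(x)$ for every $(t,x)\in(s,+\infty)\times\Rd$ and every $i$, so that $|\G(t,s)\f(x)|^2=\lim_n|\G(t,s)\f_n(x)|^2\le c(t-s)^2\|\f\|_\infty^2$.

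For \textbf{(ii)}, assume $(\f_n)\subset B_b(\Rd;\R^m)$ is bounded and converges almost everywhere to some Borel measurable $\f$. Part (i) provides a uniform bound $\sup_n\|\G(\cdot,s)\f_n\|_{L^\infty([s,T]\times\Rd;\R^m)}<+\infty$ for every $T>s$, and the interior Schauder estimates of Theorem \ref{thm-A2} lift this to $\sup_n\|\G(\cdot,s)\f_n\|_{C^{1+\alpha/2,2+\alpha}(D)}<+\infty$ for every compact set $D\subset(s,+\infty)\times\Rd$. A standard Ascoli-Arzel\`a plus diagonal extraction argument yields a subsequence converging in $C^{1,2}(D)$ for every such $D$ to some limit $\vv\in C^{1+\alpha/2,2+\alpha}_{\rm loc}((s,+\infty)\times\Rd;\R^m)$. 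To identify $\vv$, I argue exactly as in part (i): the representation formula \eqref{eq:inte_intro}, combined with absolute continuity of the $p_{ij}$ and dominated convergence, forces $\G(t,s)\f_n(x)\to\G(t,s)\f(x)$ pointwise, so necessarily $\vv=\G(\cdot,s)\f$. Since any subsequence of $(\G(\cdot,s)\f_n)$ has a further subsequence converging in $C^{1,2}(D)$ to $\G(\cdot,s)\f$, the whole sequence converges in $C^{1,2}(D)$ for every compact $D\subset(s,+\infty)\times\Rd$.

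The main obstacle I expect is the careful handling of the passage to the limit through \eqref{eq:inte_intro}, because the measures $p_{ij}(t,s,x,dy)$ are only \emph{signed} and no scalar-valued analogue of the usual probabilistic/positive-measure monotone argument is at hand. This is exactly where Theorem \ref{prop-strong} plays its crucial role: decomposing via Hahn into $p_{ij}=p_{ij}^{+}-p_{ij}^{-}$ and invoking absolute continuity of both parts with respect to the Lebesgue measure transforms almost-everywhere convergence of $(\f_n)$ into $p_{ij}^{\pm}$-almost-everywhere convergence, which is what makes dominated convergence applicable. Once this step is granted, the rest of the argument is essentially a repetition of the scheme already employed in Proposition \ref{prop:convergence_properties}.
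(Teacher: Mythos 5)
Your proof is correct. Part (i) follows exactly the paper's argument: approximate by a bounded sequence of continuous functions converging a.e., invoke the absolute continuity of the measures $p_{ij}(t,s,x,dy)$ established in Theorem \ref{prop-strong} to convert Lebesgue-a.e.\ convergence into $p_{ij}^\pm$-a.e.\ convergence, apply dominated convergence in \eqref{eq:inte_intro}, and pass to the limit in the estimate for continuous data; your explicit construction of the approximating sequence via Lusin plus truncation merely fills in a detail the paper leaves implicit. For part (ii), however, you take a genuinely different route from the paper. You re-run the whole machinery of Proposition \ref{prop:convergence_properties}(i) from scratch (uniform $L^\infty$ bounds from (i), then Schauder estimates from Theorem \ref{thm-A2}, then Ascoli--Arzel\`a plus diagonal extraction, then identification of the subsequential limit through \eqref{eq:inte_intro} and dominated convergence), and you correctly observe that once the integral representation is available you can identify the limit by pointwise convergence directly, without the continuity-at-$t=s$ argument needed in the original Proposition \ref{prop:convergence_properties}(i). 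The paper instead proceeds by reduction: it fixes $\varepsilon>0$, writes $\G(t,s)\f_n=\G(t,s+\varepsilon)\G(s+\varepsilon,s)\f_n$, uses the strong Feller property (Theorem \ref{prop-strong}) and part (i) to note that $\G(s+\varepsilon,s)\f_n$ is a bounded sequence in $C_b(\Rd;\R^m)$ converging pointwise to $\G(s+\varepsilon,s)\f\in C_b(\Rd;\R^m)$, and then applies Proposition \ref{prop:convergence_properties}(i) directly. Both arguments are valid: yours is more self-contained and perhaps more transparent, while the paper's is shorter and leverages the already-proved continuous-data case via the evolution property.
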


\begin{proof}
(i) Fix $\f\in B_b(\Rd;\Rm)$ and let the sequence $(\f_n)\in C_b(\Rd;\R^m)$ converge to $\f$ almost everywhere in $\Rd$ and satisfy
the estimate $\|\f_n\|_{\infty}\le \|\f\|_{\infty}$. Since the measures $p_{ij}(t,s,x,dy)$ are absolutely continuous with respect to the Lebesgue measure for any $t>s\in I$ and any $x\in\Rd$,
by formula \eqref{eq:inte_intro} and the dominated convergence theorem, $\G(t,s)\f_n$ converges to $\G(t,s)\f$ pointwise
everywhere in $\Rd$, as $n\to +\infty$.
Using \eqref{stima-evol-op}, we can estimate $|\G(\cdot,s)\f_n|\le c_{s,T}\|\f_n\|_{\infty}\le c_{s,T}\|\f\|_{\infty}$
in $[s,T]\times\Rd$ and, letting $n$ tend to $+\infty$, we conclude the proof.

(ii) Fix $s\in I$ and $(\f_n)$, $\f$ as in the statement. For any $\varepsilon>0$, the functions ${\bf G}(s+\varepsilon,s)\f_n$
and ${\bf G}(s+\varepsilon,s)\f$ are bounded and continuous in $\Rd$, thanks to property (i) and Theorem \ref{prop-strong}. Moreover,
the proof of property (i) shows that ${\bf G}(s+\varepsilon,s)\f_n$ converges
pointwise in $\Rd$ to ${\bf G}(s+\varepsilon,s)\f$ as $n\to +\infty$. Splitting
$\G(t,s)\f_n=\G(t,s+\varepsilon)\G(s+\varepsilon)\f_n$ for any $n\in\N$ and using Proposition \ref{prop:convergence_properties}(i) we conclude the proof.
\end{proof}

\section{Compactness of $\G(t,s)$ in $C_b(\Rd;\Rm)$}\label{comp_sect}

In this section we study the compactness of the evolution operator ${\bf G}(t,s)$ in $C_b(\R^d;\R^m)$. First of all, we show that
it is equivalent to the tightness of the total variation of the measures $\{p_{ij}(t,s,x,dy): x \in \Rd\}$
introduced in Theorem \ref{prop-strong}.

\begin{thm}\label{comp_tigh}
Let assume that either $\ref{hyp:existence_uniqueness}$ or $\ref{hyp_comp}$ are satisfied and fix $b>a \in I$.
The evolution operator ${\bf G}(t,s)$ is compact in $C_b(\Rd;\R^m)$ for any $(t,s)\in\Lambda_{[a,b]}$ if and only if the families $\{|p_{ij}|(t,s,x,dy): x\in \Rd\}$ are tight for any $(t,s)\in \Lambda_{[a,b]}$ and $i,j\in\{1,\ldots,m\}$.
\end{thm}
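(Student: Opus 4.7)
The plan is to prove the two implications separately, both exploiting the integral representation \eqref{eq:inte_intro} together with the uniform bound $\sup_{x\in\Rd}|p_{ij}|(t,s,x,\Rd)\le c_{s,t}$, which follows from \eqref{stima-evol-op} (extended to $B_b$ by Corollary \ref{coro-strong}) and the Riesz representation theorem. For the direction ``tightness $\Rightarrow$ compactness'', the idea is to use the evolution property and factor $\G(t,s)=\G(t,\tau)\G(\tau,s)$ for a fixed $\tau\in(s,t)$. Given a bounded sequence $(\f_n)\subset C_b(\Rd;\R^m)$, the functions $\h_n:=\G(\tau,s)\f_n$ are uniformly bounded in $C_b$ and, by the interior Schauder estimates of Theorem \ref{thm-A2} applied to the $L^\infty$-bounded solutions $\G(\cdot,s)\f_n$, they are equi-H\"older continuous on every compact subset of $\Rd$. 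An Ascoli-Arzel\`a and diagonal argument extracts a subsequence $\h_{n_k}$ converging locally uniformly to some bounded continuous $\h$. Writing the $i$-th component of $\G(t,\tau)(\h_{n_k}-\h_{n_l})$ via \eqref{eq:inte_intro} and splitting each integral into $B_R$ and $\Rd\setminus B_R$, the tail is majorized by $2m\varepsilon\sup_k\|\h_k\|_\infty$ by the assumed tightness of $\{|p_{ij}|(t,\tau,x,\cdot):x\in\Rd\}$, while the integral over $B_R$ is majorized by $m\,c_{\tau,t}\sup_{B_R}|\h_{n_k}-\h_{n_l}|$, which vanishes as $k,l\to\infty$. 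Hence $\G(t,\tau)\h_{n_k}=\G(t,s)\f_{n_k}$ is Cauchy in $C_b(\Rd;\R^m)$, proving compactness.

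For the converse I argue by contradiction. If the family $\{|p_{\bar i\bar j}|(t,s,x,\cdot):x\in\Rd\}$ is not tight for some pair $(\bar i,\bar j)$, there exist $\varepsilon_0>0$ and a sequence $(x_n)\subset\Rd$ with $|p_{\bar i\bar j}|(t,s,x_n,\Rd\setminus B_n)\ge\varepsilon_0$. The Hahn-Jordan decomposition yields a Borel set $A_n\subset\Rd\setminus B_n$ with $|p_{\bar i\bar j}(t,s,x_n,A_n)|\ge\varepsilon_0/2$. Since $p_{\bar i\bar j}(t,s,x_n,\cdot)$ is a finite signed Borel measure, $\chi_{A_n}$ can be approximated in $L^1(|p_{\bar i\bar j}(t,s,x_n,\cdot)|)$ by some $\phi_n\in C_c(\Rd)$ with $0\le\phi_n\le 1$ and $L^1$-error as small as desired. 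Multiplying $\phi_n$ by a smooth factor that vanishes on $B_{n-1}$ and equals $1$ outside $B_n$ introduces only a small $L^1$-error, because $A_n\cap B_n=\emptyset$, so the overlap contribution is bounded by the approximation error itself. I thus obtain $\tilde\phi_n\in C_c(\Rd)$ with $0\le\tilde\phi_n\le 1$, $\supp\tilde\phi_n\subset\Rd\setminus B_{n-1}$, and $\bigl|\int_\Rd\tilde\phi_n\,dp_{\bar i\bar j}(t,s,x_n,\cdot)\bigr|\ge\varepsilon_0/4$. Setting $\f_n:=\tilde\phi_n{\bf e}_{\bar j}$ yields a bounded sequence in $C_b(\Rd;\R^m)$ with $\|\G(t,s)\f_n\|_\infty\ge|(\G(t,s)\f_n)_{\bar i}(x_n)|\ge\varepsilon_0/4$. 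On the other hand $\f_n\to{\bf 0}$ locally uniformly, so Proposition \ref{prop:convergence_properties}(ii) forces $\G(\cdot,s)\f_n\to{\bf 0}$ locally uniformly; the assumed compactness of $\G(t,s)$ would then extract a subsequence converging in $C_b(\Rd;\R^m)$, necessarily to ${\bf 0}$ by pointwise identification, contradicting the uniform lower bound.

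The main obstacle, absent in the scalar Markovian case, is the signed nature of the measures $p_{ij}$: it prevents one from testing directly against a cutoff $1-\vartheta_n$, as is done in the classical tightness argument for positive kernels. Instead, the Hahn-Jordan decomposition must be invoked to expose a Borel set carrying a definite portion of the total variation, and a combined $L^1$-approximation plus cutoff is required to turn it into a continuous, compactly supported test function that still detects the tail mass while having support separated from $B_{n-1}$. In the forward implication, inserting the intermediate time $\tau$, so as to apply Schauder to $\G(\cdot,s)\f_n$ rather than to $\f_n$, is equally essential, since the input sequence $\f_n$ is only bounded, not equicontinuous.
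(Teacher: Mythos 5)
Your proposal is correct and follows the same high-level strategy as the paper's proof, with two modest organizational variations worth noting.

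For \emph{compactness $\Rightarrow$ tightness}, the paper does not argue by contradiction; instead it quotes the identity (from \cite[Prop.~1.4.7]{AmbFusPal})
$|p_{ij}|(t,s,x,\Rd\setminus B_n)=\sup\{\int f\,dp_{ij}(t,s,x,\cdot):f\in C_c(\Rd\setminus B_n),\|f\|_\infty\le1\}$,
interchanges the suprema over $x$ and $f$, and picks $f_n$ nearly attaining the outer supremum, so $\sup_x|p_{ij}|(t,s,x,\Rd\setminus B_n)\le\|\G(t,s)(f_n{\bf e}_j)\|_\infty+n^{-1}$; compactness and Proposition~\ref{prop:convergence_properties} then drive the right-hand side to $0$ along a subsequence. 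Your route via Hahn--Jordan plus a Lusin-type $L^1$-approximation of $\chi_{A_n}$ is a sound way to manufacture the tail-supported test function by hand, and your observation about why the signed nature of $p_{ij}$ rules out the scalar cutoff trick is exactly the point; the paper just imports the sup-characterization of total variation to sidestep the explicit approximation steps. For \emph{tightness $\Rightarrow$ compactness}, the paper factors through an intermediate time as you do, but packages the argument as norm-convergence of the compact operators ${\bf R}_n:=\G(t,r)(\chi_{B_n}\G(r,s))$ to $\G(t,s)$ (compactness of ${\bf R}_n$ again via Schauder on $B_n$), whereas you work directly with an arbitrary bounded sequence $(\f_n)$, extract a locally uniformly convergent subsequence of $\G(\tau,s)\f_n$, and verify the Cauchy property by splitting the representation integral over $B_R$ and $\Rd\setminus B_R$. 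The two formulations are equivalent and rely on the same three ingredients: the factoring $\G(t,s)=\G(t,\tau)\G(\tau,s)$, the interior Schauder bound giving local equicontinuity at the intermediate time, and the tightness-controlled tail in \eqref{eq:inte_intro}.
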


\begin{proof}
Let us suppose that ${\bf G}(t,s)$ is compact in $C_b(\Rd;\R^m)$ for any $(t,s)\in\Lambda_{[a,b]}$. We fix $i,j\in\{1,\ldots,m\}$, $(t,s)\in\Lambda_{[a,b]}$, $n\in \N$, $x \in \Rd$ and recall that
\begin{align}\label{var_tot}
|p_{ij}|(t,s,x,\Rd\setminus B_n)&=\sup\left\{\int_{\Rd}f(y)p_{ij}(t,s,x,dy):\, f\in C_c(\Rd\setminus B_n),\, \|f\|_\infty\le 1\right\}\\
&\le\sup\,\{\|{\bf G}(t,s)(f{\bf e}_j)\|_\infty:\, f\in C_c(\Rd\setminus B_n),\, \|f\|_\infty\le 1\},\nnm
\end{align}
(see \cite[Prop. 1.4.7]{AmbFusPal}). Then, for any $n\in\N$, there exist functions $f_{n}\in C_c(\Rd\setminus B_n)$ with $\|f_{n}\|_\infty \le 1$ such that
\begin{equation}\label{acqua}
\sup_{x \in \Rd}|p_{ij}|(t,s,x,\Rd\setminus B_n)\le \|{\bf G}(t,s)(f_{n}{\bf e}_j)\|_\infty+n^{-1}.
\end{equation}
Clearly, $f_{n}{\bf e}_j$ vanishes pointwise in $\Rd$ as $n\to +\infty$ and $\|f_{n}{\bf e}_j\|_{\infty}\le 1$ for any $n\in\N$.
By compactness and Proposition \ref{prop:convergence_properties},  we can extract a subsequence $(f_{n_h}{\bf e}_j)$ such that ${\bf G}(t,s)(f_{n_h}{\bf e}_j)$ vanishes uniformly in $\Rd$,
as $h \to +\infty$. Now, writing \eqref{acqua} with $n$ being replaced by $n_h$ and letting $h \to +\infty$ we deduce the tightness of the family $\{|p_{ij}|(t,s,x,dy):\, x \in \Rd\}$.

Vice versa, let us suppose that the families $\{|p_{ij}|(t,s,x,dy): x\in \Rd\}$ are tight for any $(t,s)\in \Lambda_{[a,b]}$ and $i,j\in\{1,\ldots,m\}$.
We fix $(t,s)\in \Lambda_{[a,b]}$, $r \in (s,t)$ and consider the operators ${\bf R}_n:=\G(t,r)(\chi_{B_n}\G(r,s))$ in $C_b(\Rd;\R^m)$.
Since $\G(t,r)$ is strong Feller (see Theorem \ref{prop-strong}),
each operator ${\bf R}_n$ is bounded in $C_b(\Rd;\R^m)$. We claim that ${\bf R}_n$ is compact in $C_b(\Rd;\R^m)$ for any $n\in\N$.
To this aim, let $(\f_k)$ be a bounded sequence in $C_b(\Rd;\R^m)$. From the interior Schauder estimates in Theorem \ref{thm-A2}
it follows that the sequence $(\G(r,s)\f_k)$ is bounded in $C^{2+\alpha}(B_n;\R^m)$. Hence, there exists a subsequence
$(\G(r,s)\f_{k_j})$ converging uniformly in $B_n$ to some function $\g$ as $j\to +\infty$. As a byproduct, $\chi_{B_n}\G(r,s)\f_{k_j}$ converges to
$\chi_{B_n}\g$ uniformly in $\R^d$ as $j\to +\infty$. Since the estimate \eqref{stima-evol-op} holds true also for bounded Borel functions (see Corollary \ref{coro-strong}(i)),
we conclude that ${\bf R}_n\f_{k_j}$ converges uniformly in $\R^d$ to $\G(t,r)(\chi_{B_n}\g)$ as $j\to +\infty$. Hence, ${\bf R}_n$ is compact in $C_b(\R^d;\R^m)$.

To complete the proof, we show that ${\bf R}_n$ converges to
$\G(t,s)$ as $n\to +\infty$ in ${\mathcal L}(C_b(\Rd;\R^m))$. For this purpose we fix $\f\in C_b(\Rd,\R^m)$, $i\in \{1,\ldots,m\}$. Using formula \eqref{eq:inte_intro}
we can write
\begin{align*}
(\G(t,s)\f)_i(x)-({\bf R}_n\f)_i(x)=&(\G(t,r)(\chi_{\Rd\setminus B_n}\G(r,s)\f)_i(x)
=\sum_{k=1}^m\int_{\Rd\setminus B_n}(\G(r,s)\f(y))_kp_{ik}(t,r,x,dy)
\end{align*}
for any $x\in\Rd$. Hence,
\begin{align*}
\|\G(t,s)\f-{\bf R}_n\f\|_{\infty}\le &\sum_{i,k=1}^m\sup_{x \in \Rd}\int_{\Rd\setminus B_n}|(\G(r,s)\f(y))_k||p_{ik}|(t,r,x,dy)\\
\le &c_{a,b}\|\f\|_{\infty}\sum_{i,k=1}^m\sup_{x \in \Rd}|p_{ik}|(t,r,x,\Rd\setminus B_n).
\end{align*}

Letting $n$ tend to $+\infty$ and using the tightness of the family $\{|p_{ij}|(t,r,x,dy): x \in \Rd\}$,
we conclude that ${\bf R}_n$ converges to ${\bf G}(t,s)$ in ${\mathcal L}(C_b(\Rd;\R^m)$.
\end{proof}

In the following theorem we provide sufficient conditions for the compactness of $\G(t,s)$ when Hypotheses \ref{hyp_comp} are satisfied.
Actually, these assumptions guarantee the compactness of $G(t,s)$ in $C_b(\Rd)$, which has been already studied in \cite{AngLor10Com, Lun10Com} extending
the results in the autonomous case proved in \cite{MPW}.

\begin{thm}\label{compactness}
Let $J\subset I$ be a bounded interval. Under Hypotheses $\ref{hyp_comp}$, if $G(t,s)$ is compact in $C_b(\Rd)$ for every $(t,s)\in \Sigma_J$, then $\G(t,s)$ is compact in $C_b(\Rd;\R^m)$ for every $(t,s)\in \Sigma_J$.
In particular, if there exist a $C^2$ function
$W:\Rd\to \R$ such that $\lim_{|x|\rightarrow\infty}W(x)=+\infty$, a number $R>0$ and a convex increasing function $g:[0,+\infty)\to \R$ with
$1/g\in L^1((a,+\infty))$ for large $a$ and $\tilde\A W \le -g\circ W$ in $I\times (\Rd\setminus B_R)$, then $\G(t,s)$ is compact in $C_b(\Rd;\R^m)$ for any $t>s\in I$.
\end{thm}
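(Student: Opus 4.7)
The plan is to couple Theorem \ref{comp_tigh} with the pointwise bound of Theorem \ref{point_prop} in order to transfer compactness from the scalar evolution operator $G(t,s)$ to the vector-valued one $\G(t,s)$. By Theorem \ref{comp_tigh}, it suffices to show that for every $(t,s) \in \Sigma_J$ and every pair of indices $i,j \in \{1,\ldots,m\}$, the family of finite Borel measures $\{|p_{ij}|(t,s,x,dy) : x \in \Rd\}$ is tight.

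Fix $(t,s) \in \Sigma_J$ and $i,j$. By the variational characterization of total variation (as in the proof of Theorem \ref{comp_tigh}),
\begin{align*}
|p_{ij}|(t,s,x,\Rd \setminus B_n) = \sup\left\{ \int_{\Rd} f(y) p_{ij}(t,s,x,dy) : f \in C_c(\Rd \setminus B_n),\ \|f\|_\infty \leq 1 \right\}.
\end{align*}
For any admissible $f$, extended by zero to $\Rd$, the representation formula \eqref{eq:inte_intro} gives $\int_{\Rd} f(y) p_{ij}(t,s,x,dy) = (\G(t,s)(f {\bf e}_j))_i(x)$. Since the function $f {\bf e}_j \in C_b(\Rd;\R^m)$ satisfies $|f {\bf e}_j|^2 \leq \chi_{\Rd \setminus B_n}$, the pointwise estimate in Theorem \ref{point_prop} yields
\begin{align*}
|(\G(t,s)(f {\bf e}_j))_i(x)|^2 \leq |\G(t,s)(f {\bf e}_j)(x)|^2 \leq c\, (G(t,s) \chi_{\Rd \setminus B_n})(x)
\end{align*}
for some constant $c=c(s,t)$ (uniform over the bounded interval $J$). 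Taking the supremum first over such $f$ and then over $x \in \Rd$, we obtain
\begin{align*}
\sup_{x \in \Rd} |p_{ij}|(t,s,x,\Rd \setminus B_n)^2 \leq c\, \sup_{x \in \Rd} (G(t,s) \chi_{\Rd \setminus B_n})(x).
\end{align*}

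The compactness of $G(t,s)$ in $C_b(\Rd)$ is equivalent, by the scalar analogue of Theorem \ref{comp_tigh}, to the tightness of the family of positive kernels $\{p(t,s,x,dy): x \in \Rd\}$ representing $G(t,s)$. Hence $\sup_{x \in \Rd} (G(t,s)\chi_{\Rd \setminus B_n})(x) \to 0$ as $n \to +\infty$, which yields the tightness of $\{|p_{ij}|(t,s,x,dy): x\in\Rd\}$ and, via Theorem \ref{comp_tigh}, the compactness of $\G(t,s)$ in $C_b(\Rd;\R^m)$.

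For the ``In particular'' assertion it is enough to recall that the existence of a $C^2$ Lyapunov function $W$ blowing up at infinity, together with a convex increasing $g$ with $1/g$ integrable at infinity satisfying $\tilde{\mathcal A}W \leq -g \circ W$ outside a large ball, is exactly the hypothesis under which \cite{AngLor10Com, Lun10Com} prove the compactness of the scalar evolution operator $G(t,s)$ in $C_b(\Rd)$ for every $t > s \in I$. Applying the first part of the theorem then immediately yields the compactness of $\G(t,s)$. The only delicate point in the whole argument is ensuring that the pointwise estimate of Theorem \ref{point_prop}, stated for $\f \in C_b(\Rd;\R^m)$, can be invoked on the cutoff functions $f {\bf e}_j$ with $f \in C_c(\Rd \setminus B_n)$; this is taken care of by the natural continuous extension of $f$ by zero to all of $\Rd$.
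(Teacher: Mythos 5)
Your proof is correct and takes essentially the same route as the paper: both reduce to the tightness criterion of Theorem \ref{comp_tigh}, bound $|p_{ij}|(t,s,x,\Rd\setminus B_n)$ via the variational formula, the representation \eqref{eq:inte_intro}, the pointwise estimate \eqref{pointwise}, and the positivity of $G(t,s)$, and then invoke the scalar compactness/tightness equivalence from \cite[Prop. 4.2]{AngLor10Com}. The ``In particular'' part is likewise handled exactly as in the paper by quoting \cite[Thm. 3.3]{Lun10Com}.
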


\begin{proof}
In view of Theorem \ref{comp_tigh}, we show that the family
$\{|p_{ij}|(t,s,x,dy): x\in\Rd\}$ is tight for any $(t,s)\in\Sigma_J$ and $i,j\in\{1,\ldots,m\}$.
From \eqref{var_tot}, \eqref{pointwise} and the positivity of the evolution operator $G(t,s)$, it follows that
\begin{align*}
|p_{ij}|(t,s,x,\Rd\setminus B_n)\le & e^{H_J(t-s)/2}\sup\,\{((G(t,s)|f|^2)(x))^{1/2}:\, f\in C_c(\Rd\setminus B_n),\, \|f\|_\infty\le 1\}\\
\le & e^{H_J(t-s)/2}((G(t,s)\chi_{\Rd\setminus B_n})(x))^{1/2}=e^{H_J(t-s)/2}(g_{t,s}(x,\Rd\setminus B_n))^{1/2}
\end{align*}
for any $\Rd$, any $(t,s)\in\Sigma_J$ and $i,j\in\{1,\ldots,m\}$, where $g_{t,s}(\cdot,dy)$ are
the transition kernels associated with the evolution operator $G(t,s)$. The assertion now follows from \cite[Prop. 4.2]{AngLor10Com}, which shows that the compactness of the scalar evolution operator $G(t,s)$ in $C_b(\Rd)$, for any $(t,s)\in\Sigma_{J}$, is equivalent to the tightness of the family $\{g_{t,s}(x,dy): x\in\Rd\}$ for any $(t,s)\in\Sigma_{J}$.

The last assertion follows from \cite[Thm. 3.3]{Lun10Com}
\end{proof}

\section{Uniform Gradient estimates}
\label{sect-5}

In this section, we prove a (weighted) uniform gradient estimate satisfied by the function ${\bf G}(t,s)\f$, which, besides, its own interest, leads
to some remarkable consequences that we illustrate in the next section.
We assume the following additional assumptions.

\begin{hyp}
\label{Hyp-stime-gradiente}
\begin{enumerate}[\rm (i)]
\item
The coefficients $Q_{ij}$, $b_j$ and the entries of the matrices $B_i$ and C belong to $C^{0,1+\alpha}_{\rm loc}(\overline J\times\R^d)$ for any $i,j=1,\ldots,d$, some $\alpha\in (0,1)$ and some bounded interval $J$ with $\overline J\subset I$; further,
$\langle b(t,x),x\rangle\le b_0(t,x)|x|$ for any $t\in J$, $x\in\R^d$ and some negative function $b_0$;
\item
there exists a matrix-valued function $M$ such that  $M_{ij}=M_{ji}\in C^{1,2+\alpha}_{\rm loc}(J\times\R^d)$ for any $i,j=1,\ldots,m$ and
$\inf_{J\times\Rd}\lambda_M>0$.
\item
there exist positive functions
$\psi_h:J\times\Rd\to\R$ $(h=1,\ldots,6)$ such that
$|\tilde B_i|\leq \psi_1$, $|D_k\tilde B_i|\leq\psi_2$, $|D_kC|\le\psi_3$
$|D_tM|\le \psi_4$, $|D_kM|\le \psi_5$, $|D_kQ|\leq \psi_6$
in $J\times\R^d$, for any $i,k=1,\ldots,d$, and
\begin{align}
\sup_{J\times\Rd}\frac{\psi_1^2\Lambda_{M^2}}{\lambda_Q\Lambda_M^2}<+\infty,\qquad
\sup_{(t,x)\in J\times\Rd}\frac{(\Lambda_Q(t,x))^2\Lambda_{M^2}(t,x)}{(1+|x|^2)(\lambda_Q(t,x))^2}<+\infty
\label{iesolo-1}
\end{align}
\begin{align}
\;\;\;\;\;\sup_{(t,x)\in J\times\Rd}\frac{(\Lambda_Q(t,x))^2(1+|x|^4)^{-1}}{(2\Lambda_C(t,x)+\Lambda_{{\mathcal M}+{\mathcal M}^T}(t,x))}<+\infty,
\qquad\sup_{J\times\Rd}\frac{\Lambda_M^2\psi_3^2}{2\Lambda_C+\Lambda_{{\mathcal M}+{\mathcal M}^T}}<+\infty,
\label{iesolo-2}
\end{align}
\begin{align}
\lim_{|x|\to +\infty}\sup_{t\in J}{\mathcal H}(t,x)&=\lim_{|x|\to +\infty}\sup_{t\in J}\frac{\Lambda_M(t,x)\psi_2(t,x)+\psi_4(t,x)}{\lambda_M(t,x)(2\Lambda_C(t,x)+\Lambda_{{\mathcal M}+{\mathcal M}^T}(t,x))}\notag\\
&=\lim_{|x|\to +\infty}\sup_{t\in J}\frac{\Lambda_Q(t,x)\psi_5(t,x)}{b_0(t,x)}=0,
\label{iesolo-3}
\end{align}
where
${\mathcal H}=(\Lambda_Q^2\psi_5^2+\Lambda_M^2\psi_6^2+\lambda_Q\psi_1^2)(\lambda_Q\lambda_M^2(2\Lambda_C+\Lambda_{{\mathcal M}+{\mathcal M}^T}))^{-1}$
and
${\mathcal M}:=M(J_x b)^TM^{-1}-\sum_{j=1}^d
b_j(D_jM)M^{-1}-\sum_{i,j=1}^d Q_{ij}(D_{ij}M)M^{-1}$.
\end{enumerate}
\label{hyp:system_maximum_principle}
\end{hyp}

\begin{rmk}
{\rm
We stress that the second condition in \eqref{iesolo-1} forces $M$ to grow no faster than linearly as $|x|\to +\infty$.
This condition might seem a bit strong but, already in the classical scalar case when the coefficients are bounded, in general the gradient of
$G(t,s)f$ does not vanishes as $|x|\to +\infty$. For instance, consider the one-dimensional autonomous operator $\A=u''+u$ and take $f(x)=\sin(x)$ for any $x\in\R$. Then,
$G(t,s)f=f$ for any $t\ge s\in\R$ and, clearly, $D_xT(t)f$ does not vanish at infinity.
}\end{rmk}

\begin{thm}
\label{prop:system_gradient_estimates}
Assume that Hypotheses $\ref{hyp_comp}$ and $\ref{hyp:system_maximum_principle}$
are satisfied. Then, for any $s\in J$, the function $M(J_x\G(\cdot,s)\f)^T$ is bounded and continuous in $(J\cap (s,+\infty))\times \Rd$ and there exists a positive constant $c=c_J$ such that
\begin{equation}
\sqrt{t-s}\,\|M(t,\cdot)(J_x\G(t,s)\f)^T\|_{\infty}\leq c\|\f\|_\infty,\qquad\;\,t\in J\cap (s,+\infty),\;\,\f\in C_b(\Rd;\R^m).
\label{eq:system_system_gradient_estimate}
\end{equation}
\end{thm}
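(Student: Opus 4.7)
The plan is to prove the estimate by the Bernstein method, applied to the auxiliary function
\[
\Phi(t,x) := (t-s)\,|M(t,x)(J_x\mathbf{u}(t,x))^T|^2 + \gamma\,|\mathbf{u}(t,x)|^2,
\]
where $\mathbf{u}=\mathbf{G}(\cdot,s)\mathbf{f}$ and $\gamma>0$ is a constant to be fixed. To make the computation rigorous I would first regularize in two steps: (a) approximate $\mathbf{f}$ by functions $\mathbf{f}_k\in C^{2+\alpha}_b(\mathbb{R}^d;\mathbb{R}^m)$ so that, by standard parabolic regularity, $J_x\mathbf{G}(\cdot,s)\mathbf{f}_k$ is continuous up to $t=s$; and (b) following Remark \ref{rem-neumann}, approximate $\mathbf{G}(\cdot,s)\mathbf{f}_k$ by the solutions $\mathbf{u}_n^{(k)}$ on the ball $B_n$ with homogeneous Neumann boundary conditions, which are regular enough to perform pointwise Bernstein calculations and, by interior Schauder estimates (Theorem A.2), converge locally in $C^{1,2}$. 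The Neumann choice is crucial: on $\partial B_n$ the inward-normal derivative of each component of $\mathbf{u}_n^{(k)}$ vanishes, which makes the boundary contribution of $\Phi_n$ tractable via a Hopf/maximum-principle argument.

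The main step is the algebraic identity for $(D_t-\tilde{\mathcal{A}})\Phi_n^{(k)}$. Differentiating $D_t\mathbf{u}_n^{(k)}=\bm{\mathcal{A}}\mathbf{u}_n^{(k)}$ in $x_k$ produces a system for $D_k\mathbf{u}_n^{(k)}$ with additional source terms involving $D_k Q$, $D_k B_j$, $D_k C$. Writing $D_k \mathbf{u} = M^{-1}(MD_k\mathbf{u})$ and regrouping all the terms arising from $D_tM$, $D_{ij}M$, $D_kM$, and $M(J_x\mathbf{b})^T$ produces exactly the matrix $\mathcal{M}$ introduced in Hypothesis \ref{Hyp-stime-gradiente}(iii); in particular, the coefficient of $|\mathbf{u}_n^{(k)}|^2$ after summing the contributions is $2\Lambda_C+\Lambda_{\mathcal{M}+\mathcal{M}^T}$, up to a sign that favours us when combined with the $\gamma|\mathbf{u}|^2$ piece. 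All remaining cross terms involve products $|M(J_x\mathbf{u}_n^{(k)})^T|\cdot|\mathbf{u}_n^{(k)}|$ or $|J_x\mathbf{u}_n^{(k)}|\cdot|\mathbf{u}_n^{(k)}|$; applying Young's inequality with weights tuned to $\lambda_Q,\Lambda_Q,\lambda_M,\Lambda_M$ reduces their estimation to bounds on $\psi_1,\ldots,\psi_6$, and the resulting prefactors are precisely those controlled by the sup's in \eqref{iesolo-1}--\eqref{iesolo-2} together with the quantity $\mathcal{H}$. The uniform ellipticity gives $-2\gamma\sum_{i,j}Q_{ij}\langle D_i\mathbf{u}_n^{(k)},D_j\mathbf{u}_n^{(k)}\rangle\le -2\gamma\lambda_Q\lambda_M^{-2}|M(J_x\mathbf{u}_n^{(k)})^T|^2$ (after bounding $|J_x\mathbf{u}|\ge\Lambda_M^{-1}|M(J_x\mathbf{u})^T|$), and this negative term absorbs the cross terms once $\gamma$ is chosen large enough and $|x|$ is large enough, using \eqref{iesolo-3} to make $\mathcal{H}$ small.

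With this differential inequality in hand, I would conclude by a maximum-principle argument in $[s,T]\cap J \times \overline{B}_n$. On the bounded region $\{|x|\le R\}$ the interior Schauder bound together with the $L^\infty$ estimate $\|\mathbf{u}_n^{(k)}(t,\cdot)\|_\infty\le c_{s,T}\|\mathbf{f}_k\|_\infty$ (Corollary \ref{coro-strong}) yields a direct bound on $|M(J_x\mathbf{u}_n^{(k)})^T|^2$ there, independent of $n$. For $|x|\ge R$ the Lyapunov function $\varphi_J$ from Hypothesis \ref{hyp_comp}(iii) applied to the scalar operator $\tilde{\mathcal{A}}$ implies, via a standard comparison with $\Phi_n^{(k)}+\varepsilon\varphi_J$ and letting $\varepsilon\to 0$, that
$\|\Phi_n^{(k)}(t,\cdot)\|_\infty \le c_J\|\mathbf{f}_k\|_\infty^2$ for $t\in J\cap(s,+\infty)$, with $c_J$ independent of $n,k$. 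Letting $n\to+\infty$ via local $C^{1,2}$ convergence transfers the estimate to $\mathbf{G}(\cdot,s)\mathbf{f}_k$; the continuity in $(t,x)\mapsto M(t,x)(J_x\mathbf{G}(t,s)\mathbf{f})^T$ on $(s,+\infty)\times\mathbb{R}^d$ follows from the interior Schauder regularity of $\mathbf{G}(\cdot,s)\mathbf{f}$ (Theorem 2.4) and the regularity of $M$. Finally, taking $k\to+\infty$ with $\|\mathbf{f}_k\|_\infty\le\|\mathbf{f}\|_\infty$ and using Proposition \ref{prop:convergence_properties}(ii) to pass the pointwise inequality to $\mathbf{f}\in C_b$ yields \eqref{eq:system_system_gradient_estimate}.

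The principal obstacle I anticipate is the bookkeeping in the Bernstein identity: producing the exact matrix $\mathcal{M}=M(J_x\mathbf{b})^T M^{-1}-\sum_j b_j(D_jM)M^{-1}-\sum_{i,j}Q_{ij}(D_{ij}M)M^{-1}$ as the natural object that collects the $M$-dependent error requires carefully splitting the $\tilde{\mathcal{A}}$ applied to $|M(J_x\mathbf{u})^T|^2$ into three groups (pure drift/second-order action on $M$, commutators with spatial differentiation, and ellipticity "good" term) and showing that only the terms bounded by the $\psi_h$ remain. Equally delicate is verifying that each cross term can be absorbed using the precise quantitative bounds in \eqref{iesolo-1}--\eqref{iesolo-3}: the exponents of $\lambda_Q,\Lambda_Q,\Lambda_M$ have to match exactly, and this is what the seemingly baroque form of $\mathcal{H}$ enforces.
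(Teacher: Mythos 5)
Your plan correctly identifies the Bernstein functional $\Phi = (t-s)\,|M(J_x\uu)^T|^2 + \gamma|\uu|^2$ and the role of $\mathcal M$, $\psi_1,\ldots,\psi_6$ and \eqref{iesolo-1}--\eqref{iesolo-3} in closing the differential inequality, and that part is sound. The gap is in the boundary treatment: you propose to approximate by Neumann problems on $B_n$ and claim that the Neumann condition makes the boundary contribution of $\Phi_n$ tractable. This is true only when $M$ is a constant multiple of the identity. For a general $x$-dependent $M$ one has, on $\partial B_n$,
\[
\partial_\nu\bigl|M\nabla_x u_j\bigr|^2
=2\bigl\langle (\partial_\nu M)\nabla_x u_j,\,M\nabla_x u_j\bigr\rangle
+2\bigl\langle M\,\partial_\nu\nabla_x u_j,\,M\nabla_x u_j\bigr\rangle,
\]
and neither term has a sign: the first involves $\partial_\nu M$, which is arbitrary, and even when $M$ is constant the second involves $M^T M$ rather than $I$, so the classical convexity/curvature computation that gives $\partial_\nu|\nabla u|^2\le 0$ under homogeneous Neumann conditions on a ball no longer applies. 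This is precisely the content of Remark \ref{rem-4.7} in the paper, which reserves the Neumann-approximation shortcut to the case $M=I$. The paper's own proof instead uses \emph{Dirichlet} approximants $\uu_n=\G_n^{\mathcal D}(\cdot,s)\f$ together with a cut-off $\vartheta_n$ (with $\vartheta_n\equiv 0$ near $\partial B_n$), so that the Bernstein functional $v_n=|\uu_n|^2+a(t-s)\vartheta_n^2\sum_j|M\nabla_xu_{n,j}|^2$ \emph{vanishes} on $(s,T]\times\partial B_n$ and the classical maximum principle on the bounded domain applies without any boundary-sign discussion. The price paid is the extra term $g_{3,n}$ coming from derivatives of $\vartheta_n$, which is exactly what the first condition in \eqref{iesolo-2}, the last condition in \eqref{iesolo-1} and the normal-derivative condition in \eqref{iesolo-3} are designed to control.

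Two smaller points. First, once one works on the bounded ball $B_n$ with a Bernstein functional that vanishes on the lateral boundary, there is no need to invoke the Lyapunov function $\varphi_J$ from Hypothesis \ref{hyp_comp}(iii): the classical maximum principle on the compact set $[s,T]\times\overline{B}_n$ suffices, and the passage to $\Rd$ is by local $C^{1,2}$ convergence as in the existence proof. Second, your outer regularization of $\f$ by $\f_k\in C^{2+\alpha}_b$ is unnecessary; the singular weight $(t-s)$ in the Bernstein functional already forces $\Phi_n(s,\cdot)=\gamma|\f|^2$, which is bounded, so no additional smoothness of the initial datum is needed.
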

\begin{proof}
To simplify the notation we set $\uu:=\G(\cdot,s)\f$ and
$\uu_n=\G_n^{\mathcal D}(t,s)\f$.
Moreover, we set ${\mathcal F}_n:=\sum_{j=1}^m|M\nabla_xu_{n,j}|^2$, ${\mathcal G}_n:=\sum_{i=1}^d\sum_{j=1}^m|M\nabla_xD_iu_{n,j}|^2$ and, throughout the proof, we denote by $c$ a positive constant, which may vary from line to line,
may depend on $s$ and $T=\sup J$, but is independent of $n$.
Let us consider the function $v_n=|\uu_n|^2+a(\cdot-s)\vartheta_n^2{\mathcal G}_n$, where $a$ is positive parameter to be fixed later on, $\vartheta_n(x)=\vartheta(|x|/n)$, for any $x\in\R^d$, and  $\vartheta\in C^{\infty}_c(\R)$ satisfies $\chi_{(-\infty,1/2]}\le\vartheta\le\chi_{(-\infty,1]}$.
From now on, we do not stress the dependence on $n$ of the component of $\uu_n$. Moreover, to ease the notation, we set $\phi_s(t):=t-s$.
The results in \cite[Thms 9.7 \& 9.11]{Fri64Par} and straightforward computations show that
$v_n$ is smooth, vanishes on $(s,T]\times \partial B_n$, $v_n(s,\cdot)=|\f|^2$ and
$D_tv_n-\tilde\A v_n=g_n$ in $(s,T]\times B_n$,
where $g_n=\sum_{i=1}^5g_{i,n}$ with
\begin{align*}
g_{1,n}=&-2\sum_{j=1}^m|\sqrt{Q}\nabla_xu_j|^2 -2a\phi_s\vartheta_n^2\sum_{i,k=1}^d\sum_{j=1}^mQ_{ik}\langle M\nabla_xD_iu_j,M\nabla_x D_k u_j\rangle,\\[2mm]
g_{2,n}=& a\phi_s\vartheta_n^2\sum_{j=1}^m \langle (\mathcal M+{\mathcal M}^T) M\nabla_x u_j,M\nabla_x u_j\rangle
+2a\phi_s\vartheta_n^2\sum_{j,k=1}^m C_{jk}\langle M \nabla_x u_{k},M \nabla_x u_j\rangle\\
&-2a\phi_s\vartheta_n^2\sum_{i,k=1}^d\sum_{j=1}^mQ_{ik}\langle
D_iM\nabla_xu_j,D_kM\nabla_x u_j\rangle+2a\phi_s\vartheta_n^2\sum_{j=1}^m\langle D_tM\nabla_xu_{n,j},\nabla_xu_{n,j}\rangle, \\[2mm]
g_{3,n}=&-2a\phi_s\langle Q \nabla\vartheta_n, \nabla\vartheta_n\rangle{\mathcal F}_n-2a\phi_s\vartheta_n(\tilde\A\vartheta_n){\mathcal F}_n\\
& -8a\phi_s\vartheta_n\sum_{k=1}^d\sum_{j=1}^m(Q\nabla\vartheta_n)_k\big (\langle M \nabla_x D_k u_j,M\nabla_x u_j\rangle+\langle D_kM\nabla_xu_j,M\nabla_x u_j\rangle\big ),\\[2mm]
g_{4,n}=& 2a\phi_s\vartheta_n^2\sum_{i,k=1}^d\sum_{j=1}^mD^2_{ik}u_j
\langle M\nabla_xQ_{ik},M\nabla_xu_j\rangle-4a\phi_s\vartheta_n^2\sum_{i,k=1}^d\sum_{j=1}^mQ_{ik}\langle D_iM \nabla_x D_ku_j,M\nabla_x u_j\rangle \\
& -4a\phi_s\vartheta_n^2\sum_{i,k=1}^d\sum_{j=1}^mQ_{ik}\langle D_iM\nabla_xu_j,M\nabla_x D_k u_j\rangle, \\[1.5mm]
g_{5,n}=&  2\sum_{i=1}^d\langle \uu_n, \tilde{B}_iD_i\uu_n\rangle
+2\langle C\uu_n,\uu_n\rangle+a\vartheta_n^2{\mathcal F}_n+2a\phi_s\vartheta_n^2\sum_{j,k=1}^m u_{k}\langle M \nabla_x C_{jk},M \nabla_x u_j\rangle \\
& +\!2a\phi_s\vartheta_n^2\sum_{i=1}^d\!\sum_{j,k=1}^m\!({\tilde B}_i)_{jk}\langle M\nabla_xD_iu_{k},M\nabla_xu_j\rangle
+ 2a\phi_s\vartheta_n^2\sum_{i=1}^d\sum_{j,k=1}^mD_iu_{k}\langle M \nabla_x(\tilde{B}_i)_{jk},M \nabla_x u_j\rangle.
\end{align*}

Let us estimate the function $g_n$. Recalling that, for any pair of nonnegative definite matrices $M_1$ and $M_2$, it holds that ${\rm Tr}(M_1M_2)\ge \lambda_{M_1}{\rm Tr}(M_2)$
and $|M\xi|^2\le\Lambda_{M^2}|\xi|^2$ in $J\times\Rd$, for any $\xi\in\R^d$, we conclude that
$g_{1,n}\leq -2\sum_{j=1}^m\lambda_Q|\nabla_xu_{j}|^2-2a\phi_s\vartheta_n^2\lambda_Q{\mathcal G}_n
\le -2\lambda_Q\Lambda_{M^2}^{-1}{\mathcal F}_n-2a\phi_s\vartheta_n^2\lambda_Q{\mathcal G}_n$.
The assumptions on ${\mathcal M}$, $M$ and $C$ allow us to estimate
$g_{2,n}\le a\phi_s\vartheta_n^2 (2\Lambda_C+2\lambda_M^{-1}\psi_4+\Lambda_{{\mathcal M}+{\mathcal M}^T}){\mathcal F}_n$.

Let us now consider the function $g_{3,n}$. Its first term is negative; hence, we disregard it. As far as the other terms are concerned, using the estimates
$|Q\nabla\vartheta_n|\leq cn^{-1}\Lambda_Q\chi_{B_{2n}\setminus B_n}|\vartheta'(|\cdot|n^{-1})|$ and ${\rm Tr}(QD^2\vartheta_n)\leq c\Lambda_Qn^{-2}\chi_{B_{2n}\setminus B_n}$, which hold in $I\times\Rd$, and the Young inequality $cxy\le a^{-1/2}c^2x^2+a^{1/2}y^2$ we get
\begin{align*}
|g_{3,n}|
\leq & ac\phi_s\Lambda_Qn^{-2}\vartheta_n\chi_{B_{2n}\setminus B_n}{\mathcal F}_n+2a\phi_s|\vartheta'(|\cdot|n^{-1})|\vartheta_nn^{-1}\langle {\bf b},x\rangle |x|^{-1}\chi_{B_{2n}\setminus B_n}{\mathcal F}_n\\
& +ac\phi_s\vartheta_nn^{-1}\Lambda_Q{\mathcal F}_n^{1/2}{\mathcal G}_n^{1/2}\chi_{B_{2n}\setminus B_n}
+ ac\phi_s|\vartheta'(|\cdot|n^{-1})|\vartheta_nn^{-1}\Lambda_Q\lambda_M^{-1}\psi_5\chi_{B_{2n}\setminus B_n}{\mathcal F}_n\\
\leq & \sqrt{a}c\phi_s{\mathcal F}_n+a^{3/2}c\phi_sn^{-4}\Lambda_Q^2\vartheta_n^2\chi_{B_{2n}\setminus B_n}{\mathcal F}_n
+\sqrt{a}c\phi_s\lambda_Q^{-1}\Lambda_Q^2n^{-2}\chi_{B_{2n}\setminus B_n}{\mathcal F}_n\\
&+ a\phi_s|\vartheta'(|\cdot|n^{-1})|\vartheta_nn^{-1}(2b_0+c\Lambda_Q\psi_5)\chi_{B_{2n}\setminus B_n}{\mathcal F}_n+a^{3/2}\phi_s \lambda_Q\vartheta_n^2{\mathcal G}_n.
\end{align*}

Further, we split $g_{4,n}=g_{4,1,n}+g_{4,2,n}+g_{4,3,n}$. To estimate $g_{4,1,n}$, we observe that
\begin{align*}
\bigg |\sum_{i,k=1}^d\sum_{j=1}^mD^2_{ik}u_{j}
\langle M\nabla_xQ_{ik},M\nabla_xu_{j}\rangle\bigg |
=&\bigg |\sum_{i,h,k=1}^d\sum_{j=1}^m\langle M\nabla_xQ_{ik}M^{-1}_{kh}(M\nabla_xD_iu_{j})_h,M\nabla_xu_{j}\rangle\bigg |\\
\le & \sqrt{d}{\mathcal F}_n^{1/2}{\mathcal G}_n^{1/2}\Lambda_M\lambda_M^{-1}\psi_6\\
\le &\varepsilon^{-1/2}c\Lambda_M^2\lambda_M^{-2}\lambda_Q^{-1}\psi_6^2{\mathcal F}_n+\varepsilon^{1/2}\lambda_Q{\mathcal G}_n.
\end{align*}
Using the Young inequality, we conclude that
$|g_{4,1,n}|\le a\varepsilon^{-1}c\phi_s\Lambda_M^2\lambda_M^{-2}\lambda_Q^{-1}\vartheta_n^2\psi_6^2{\mathcal F}_n+2a\varepsilon\phi_s\vartheta_n^2\lambda_Q{\mathcal G}_n$ for any $\varepsilon>0$.

The other terms in the definition of $g_{4,n}$ can be estimated in a similar way, splitting
$D_iM\nabla_x=(D_iMM^{-1})M\nabla_x$. We obtain
$|g_{4,2,n}+g_{4,3,n}|\le a\varepsilon^{-1}c\phi_s\vartheta_n^2\psi_5^2\lambda_M^{-2}\lambda_Q^{-1}\Lambda_Q^2{\mathcal F}_n+a\varepsilon\phi_s\vartheta_n^2\lambda_Q{\mathcal G}_n$, for any $\varepsilon>0$.
Collecting everything together, and choosing $\varepsilon=1/6$ we get
\begin{align*}
|g_{4,n}|\leq ac\phi_s\lambda_Q^{-1}\vartheta_n^2\lambda_M^{-2}(\Lambda_Q^2\psi_5^2+\Lambda_M^2\psi_6^2){\mathcal F}_n+\frac{1}{2}a\phi_s\vartheta_n^2\lambda_Q{\mathcal G}_n.
\end{align*}

Finally, we consider the function $g_{5,n}$ and observe that
\begin{align*}
g_{5,1,n}=&2\sum_{i=1}^d\sum_{j,h,k=1}^m(M^{-1})_{ih}(\tilde B_i)_{jk}(M\nabla_xu_k)_hu_j\\
g_{5,6,n}=&2a\phi_s\vartheta_n^2\sum_{i=1}^d\sum_{j,h,k=1}^m(M\nabla_xu_k)_h\langle M\nabla_x(\tilde B_i)_{jk}(M^{-1})_{ih},M\nabla_xu_{j,n}\rangle.
\end{align*}
Using Hypothesis \ref{Hyp-stime-gradiente}(iii) and, again, Young inequality, we obtain
\begin{align*}
|g_{5,n}|
\leq & (a^{-1/2}c+ca\phi_s+2\Lambda_C)|\uu_n|^2
+(a^{1/2}\lambda_M^{-2}\psi_1^2+a){\mathcal F}_n\\
& +c\phi_s\vartheta_n^2(a\lambda_M^{-1}\Lambda_M\psi_2+a^{3/2}\Lambda_M^2\psi_3^2
+a\lambda_M^{-2}\psi_1^2){\mathcal F}_n+2a^{3/2}\phi_s\vartheta_n^2\lambda_Q{\mathcal G}_n.
\end{align*}

Now, collecting all the terms together, we get
\begin{eqnarray*}
g_n \leq (ca^{-1/2}+ca\phi_s+2\Lambda_C)|\uu_n|^2+{\mathcal I}_n\mathcal{F}_n +3a\phi_s\vartheta_n^2\lambda_Q\bigg (\sqrt{a}-\frac{1}{2}\bigg ){\mathcal G}_n,
\end{eqnarray*}
where
\begin{align*}
{\mathcal I}_n(t,x)=& a\phi_s|\vartheta'(|x|n^{-1})|\vartheta_n(x)n^{-1}(2b_0(t,x)+c\Lambda_Q(t,x)\psi_5(t,x))\chi_{B_{2n}\setminus B_n}(x)+a+\sqrt{a}c(T-s)\\
&+\sqrt{a}\frac{(\psi_1(t,x))^2}{(\lambda_M(t,x))^2}+(T-s)\sqrt{a}c\frac{(\Lambda_Q(t,x))^2}{\lambda_Q(t,x)(1+|x|^2)}
-2\frac{\lambda_Q(t,x)}{\Lambda_{M^2}(x)}+a\phi_s(t)(\vartheta_n(x))^2{\mathcal J}_n(t,x),
\end{align*}
for any $(t,x)\in [s,T]\times\R^d$ and
\begin{align*}
{\mathcal J}_n(t,x) =&[2\Lambda_C(t,x)+\Lambda_{\mathcal M+\mathcal M^T}(t,x)](1+c{\mathcal H}(t,x))+2\frac{\psi_4(t,x)}{\lambda_M(t,x)} \\
&+c\sqrt{a}\bigg (\frac{(\Lambda_Q(t,x))^2}{1+|x|^4}+(\Lambda_M(t,x))^2(\psi_3(t,x))^2\bigg )
+c\psi_2(t,x)\frac{\Lambda_M(t,x)}{\lambda_M(t,x)}.
\end{align*}
Clearly, the coefficients in front of $|\uu_n|^2$ and ${\mathcal G}_n$ are, respectively, bounded in $[s,T]$, for any choice of $a$,
and negative, if $a<1/4$.
Let us consider the term ${\mathcal I}_n$.
Condition \eqref{iesolo-3} shows that $b_0(t,x)+c\Lambda_Q(t,x)\psi_5(t,x)$ vanishes as $|x|\to +\infty$, uniformly with respect to
$t\in J$. Hence, there exists a positive constant $K$ such that $b_0+c\Lambda_Q\psi_5\le K$ in $J\times\Rd$ and we can estimate the first term in the first line
of ${\mathcal I}_n$ by $a(T-s)Kn^{-1}$. Now, taking \eqref{iesolo-1} into account, it follows that ${\mathcal I}_n<0$
 provided that $a$ is small enough.
Finally, \eqref{iesolo-2} and \eqref{iesolo-3} imply that ${\mathcal J}_n$ is bounded from above in
$(s,T]\times\R^d$ if $\varepsilon\in (0,1/6)$ is fixed small enough. We have so proved that $|g_n|\le cv_n$ in $[s,T]\times\R^d$, and, by the classical maximum principle, we infer that $v_n\leq c\|\f\|_\infty$,
i.e., $\|\G_n^{\mathcal D}(t,s)\f\|_{\infty}+(t-s)^{1/2}\|\vartheta_n M(J_x\G_n^{\mathcal D}(t,s)\f)^T\|_{\infty}\le
c\|\f\|_{\infty}$ for any $t\in (s,T]$. As
the proof of Theorem \ref{thm:existence_solution}
shows, $\G_n^{\mathcal D}(t,s)\f$ converges to $\G(t,s)\f$ in $C^2(B_M;\R^m)$ for any $M>0$. Therefore, letting $n\to
+\infty$ in the previous estimate, we complete the proof.
\end{proof}

\begin{rmk}
\label{rem-4.7}
{\rm In the particular case when $M=I$, we can relax a bit Hypotheses \ref{Hyp-stime-gradiente}. Indeed, by Remark \ref{rem-neumann}
we can approximate the evolution operator $\G(t,s)$ with the evolution operator $\G_n^{\mathcal N}(t,s)$ associated with the realization of
$\bm{\mathcal A}$ in $B_n$ with homogeneous Neumann boundary conditions. In this way, we do not need to introduce the cut-off sequence $(\vartheta_n)$
since the normal derivatives of the function $|J_x\G_n^{\mathcal N}(t,s)\f|^2$ is nonpositive on $\partial B_n$.
As a byproduct, the term $g_{3,n}$ disappears and we do not need to assume anymore
the first condition in \eqref{iesolo-2} and the last condition in \eqref{iesolo-1} and \eqref{iesolo-3}.
Moreover, in this case the matrix ${\mathcal M}$ reduces to the matrix $J_xb$. Therefore, we are assuming a bound on the growth as $|x|\to +\infty$ of
the quadratic form associated with the matrix $J_xb$, i.e., we are assuming a dissipativity condition on the diagonal part of the drift of
$\bm{\mathcal A}$. In the scalar case, this is a typical assumption used to prove gradient estimates both
in the autonomous and nonautonomous setting (see e.g., \cite{AngLorLun12Asy, BerLor07, BerLorbook, KunLorLun09Non, lorenzi-1, lunardi}).
Finally, if the operator $\bm{\mathcal A}$ satisfies Hypotheses \ref{Hyp-stime-gradiente}, then the scalar operator $\tilde {\mathcal A}$ satisfies
the same conditions, and therefore, the scalar evolution operator $G(t,s)$ satisfies \eqref{eq:system_system_gradient_estimate} as well.}
\end{rmk}

\section{Some applications of the gradient estimate \eqref{eq:system_system_gradient_estimate}}

\subsection{The converse of Theorem \ref{compactness}}

As Theorem \ref{compactness} shows, the compactness of the evolution operator $G(t,s)$ in $C_b(\Rd)$ implies the compactness of the evolution operator $\G(t,s)$ in $C_b(\Rd;\R^m)$.
Now, we are interested in finding out sufficient condition for the converse. The main step in this direction, consists in proving formula \eqref{repr_formula_intro}.
Once it is proved, we can adapt to our situation the arguments in the proof of \cite[Thm. 3.6]{DelLor11OnA}.

Since, in general, the evolution operator $G(t,s)$ is well defined only on bounded (and Borel measurable) functions, to make formula \eqref{repr_formula_intro} meaningful, we need to guarantee that the Borel measurable function
$({\mathcal S}\G(\cdot,s)\f)(r,\cdot)$ is bounded in $\Rd$ for any $r\in (s,t)$
and that the integral in the right-hand side of \eqref{repr_formula_intro} is finite.
Note that in the weakly coupled case considered in \cite{DelLor11OnA}, $\tilde B_i\equiv 0$ for any $i=1,\ldots,d$. Hence,
the boundedness of $row_{\bar k}C$ and the uniform estimate \eqref{stima-evol-op} were enough to guarantee the
existence of the integral term in \eqref{repr_formula_intro}. In our situation things are much more difficult since
we have to guarantee that also the function $\sum_{i=1}^d \langle row_{\bar k}\tilde{B}_i(r,\cdot),D_i\G(r,s)\f\rangle$ is bounded in $\Rd$ for
any $r\in (s,t)$.
As we will see in the following proposition, thanks to the estimate
\eqref{eq:system_system_gradient_estimate}, the boundedness of the function $({\mathcal S}\G(\cdot,s)\f)(r,\cdot)$ can be guaranteed under the following two different additional assumptions.

\begin{hyp} \label{bdd_pot}
Hypotheses $\ref{hyp_comp}$  and Hypotheses $\ref{Hyp-stime-gradiente}$ are satisfied. Further, there exists a positive constant $c_*$ such that
$|\tilde B_i|\le c_*\lambda_M$ in $J\times\Rd$ and $row_{\bar k}C\in C_b(J\times \Rd;\Rm)$ for any $i=1,\ldots,d$ and some $\bar k\in\{1,\ldots,m\}$.
\end{hyp}

\begin{hyp}\label{bdd_pot_drift}
Hypotheses $\ref{hyp_comp}$ and Hypotheses $\ref{Hyp-stime-gradiente}$ with $M=I$ are satisfied. Further,
there exist $\bar k\in\{1,\ldots,m\}$ and a bounded interval $J$ such that $row_{\bar k}C$ and $row_{\bar k}\tilde B_i$
belong to $C_b(J\times \Rd;\Rm)$ for any $i=1, \ldots,d$.
\end{hyp}

\begin{thm}
\label{thm-comp-rev}
Assume that Hypotheses $\ref{bdd_pot}$ $($resp. Hypotheses $\ref{bdd_pot_drift})$ are satisfied. If $\G(t,s)$
is compact in $C_b(\Rd;\R^m)$ for any $(t,s)\in \Sigma_{J}$, then $G(t,s)$ is compact in $C_b(\Rd)$ for the same values of $t$ and $s$.
\end{thm}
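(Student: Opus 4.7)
The strategy follows the blueprint outlined in the introduction: first establish the representation formula \eqref{repr_formula_intro}, and then adapt the argument of \cite[Thm.~3.6]{DelLor11OnA} to deduce compactness of $G(t,s)$ from that of $\G(t,s)$.

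The plan for \eqref{repr_formula_intro} is to approximate $\uu:=\G(\cdot,s)\f$ by $\uu_n:=\G_n^{\mathcal D}(\cdot,s)\f$ (respectively, by the Neumann approximation under Hypotheses \ref{bdd_pot_drift}, invoking Remark \ref{rem-neumann}). The $\bar k$-th component $u_{n,\bar k}$ satisfies $D_t u_{n,\bar k}=\tilde{\mathcal A}u_{n,\bar k}+\mathcal S\uu_n$ in $(s,t)\times B_n$, with the appropriate homogeneous boundary condition and initial datum $f_{\bar k}$, so the scalar variation-of-constants formula in $B_n$ gives
\begin{equation*}
u_{n,\bar k}(t,\cdot)=G_n^{\mathcal D}(t,s)f_{\bar k}+\int_s^t G_n^{\mathcal D}(t,r)(\mathcal S\uu_n)(r,\cdot)\,dr.
\end{equation*}
To pass to the limit $n\to+\infty$ I would use: the $C^{1,2}_{\rm loc}$-convergence $\uu_n\to\uu$ from the proof of Theorem \ref{thm:existence_solution}; the local uniform convergence $G_n^{\mathcal D}(t,s)f_{\bar k}\to G(t,s)f_{\bar k}$; and crucially a uniform-in-$n$ integrable dominating function for $\mathcal S\uu_n$. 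Since the proof of Theorem \ref{prop:system_gradient_estimates} in fact produces \eqref{eq:system_system_gradient_estimate} for the approximations $\uu_n$ with $n$-independent constant, the bound $|M(r,\cdot)(J_x\uu_n)^T|\le c(r-s)^{-1/2}\|\f\|_\infty$ holds uniformly in $n$. Under Hypotheses \ref{bdd_pot}, the condition $|\tilde B_i|\le c_*\lambda_M$ combined with $|\nabla_xu_{n,j}|\le\lambda_M^{-1}|M\nabla_xu_{n,j}|$ gives $|\tilde B_iD_i\uu_n|\le c(r-s)^{-1/2}\|\f\|_\infty$, while under Hypotheses \ref{bdd_pot_drift} the analogous bound follows directly from the boundedness of $row_{\bar k}\tilde B_i$ (with $M=I$). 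In both cases $|\mathcal S\uu_n(r,\cdot)|\le C(r-s)^{-1/2}\|\f\|_\infty$, which is $L^1$ in $r$, and dominated convergence establishes \eqref{repr_formula_intro}.

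Granted \eqref{repr_formula_intro}, let $(f_n)$ be bounded in $C_b(\Rd)$ and set $\f_n:=f_n{\bf e}_{\bar k}$. By hypothesis, along a subsequence $\G(t,s)\f_n$ converges uniformly in $\Rd$, so in particular $(\G(t,s)\f_n)_{\bar k}$ does. It thus suffices to extract a further subsequence along which
\begin{equation*}
I_n(\cdot):=\int_s^t G(t,r)(\mathcal S\G(\cdot,s)\f_n)(r,\cdot)\,dr
\end{equation*}
converges uniformly in $\Rd$. I would split the integral at $s+\delta$: using the uniform $(r-s)^{-1/2}$ bound proved above together with the contractivity $\|G(t,r)\|_{\mathcal L(C_b(\Rd))}\le 1$, the contribution of $\int_s^{s+\delta}$ is dominated in sup-norm by $C\sqrt{\delta}\|f_n\|_\infty$, hence is arbitrarily small. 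For the tail $\int_{s+\delta}^t$, the semigroup identity $\G(r,s)=\G(r,s+\delta/2)\G(s+\delta/2,s)$ combined with compactness of $\G(s+\delta/2,s)$ gives, along a subsequence, uniform convergence of $\G(s+\delta/2,s)\f_n$; the bounded-operator continuity of $\G(r,s+\delta/2)$ then yields uniform-in-$\Rd$ convergence of $\G(r,s)\f_n$ for every $r\in[s+\delta,t]$. Interior Schauder estimates (Theorem \ref{thm-A2}) together with the weighted gradient estimate \eqref{eq:system_system_gradient_estimate} and the tightness of $|p_{ij}(r,s,\cdot,dy)|$ granted by Theorem \ref{comp_tigh} upgrade this to uniform-in-$\Rd$ convergence of the spatial derivatives $D_i\G(r,s)\f_n$, and therefore of $(\mathcal S\G(\cdot,s)\f_n)(r,\cdot)$. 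A further application of dominated convergence, using the uniform $L^1$-in-$r$ majorant constructed above, concludes the argument.

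The main obstacle is precisely the global, as opposed to merely local, control of the gradient $D_i\G(r,s)\f_n$: in the weakly coupled setting of \cite{DelLor11OnA} the operator $\mathcal S$ involved only $\uu$, so compactness of $\G(r,s)$ alone sufficed to treat the integral term; here the presence of $\tilde B_iD_i\G(r,s)\f$ forces one to combine the weighted gradient estimate of Theorem \ref{prop:system_gradient_estimates} with the tightness information coming from Theorem \ref{comp_tigh}, and it is exactly to secure the pointwise boundedness of $\mathcal S\G(\cdot,s)\f$ (modulo the $(r-s)^{-1/2}$ singularity which keeps the time integral convergent) that the two alternative sets of assumptions in Hypotheses \ref{bdd_pot} and \ref{bdd_pot_drift} have been formulated.
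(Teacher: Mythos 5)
Your overall blueprint—establish \eqref{repr_formula_intro}, split the integral near $s$, exploit compactness together with the evolution property—matches the paper's. However, two steps are not justified as you state them.

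First, your passage to the limit in the variation-of-constants formula under Hypotheses \ref{bdd_pot}: you claim that the proof of Theorem \ref{prop:system_gradient_estimates} yields the bound $|M(r,\cdot)(J_x\uu_n)^T|\le c(r-s)^{-1/2}\|\f\|_\infty$ uniformly in $n$ on $B_n$. In fact that proof delivers $(t-s)^{1/2}\|\vartheta_n M(J_x\G_n^{\mathcal D}(t,s)\f)^T\|_\infty\le c\|\f\|_\infty$, i.e., the bound carries the cut-off $\vartheta_n$ and is lost near $\partial B_n$. Consequently one cannot bound $\Phi_{n,\bar k}$ on all of $B_n$ and dominated convergence does not go through directly; the paper resolves this (in its Step~2) by replacing $\uu_n$ with an auxiliary sequence $\ww_n$ solving the \emph{inhomogeneous} scalar problem with compactly supported source $\bm\Psi_n$, for which the genuine $n$-uniform gradient bound is available. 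Under Hypotheses \ref{bdd_pot_drift} with the Neumann approximation (Remark \ref{rem-4.7}) the cut-off is unnecessary and your argument does work, but you cannot transfer it verbatim to the Dirichlet case.

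Second, and more seriously, the claim that interior Schauder estimates together with tightness ``upgrade'' the uniform convergence of $\G(r,s)\f_n$ to uniform-in-$\Rd$ convergence of the spatial derivatives $D_i\G(r,s)\f_n$ is not supported: Schauder gives only locally uniform convergence (the constants in Theorem \ref{thm-A2} blow up as the domains grow, since the coefficients are unbounded), and tightness of $|p_{ij}(r,s,\cdot,dy)|$ controls the measures, not the derivatives. The correct mechanism is the one the paper uses in its Step~3: write $\G(r,s)\f_n=\G(r,s_0)\G(s_0,s)\f_n$ and apply \eqref{poirot} \emph{restarted at $s_0$} to the uniformly convergent data $\g_n:=\G(s_0,s)\f_n$, giving
\begin{equation*}
\|({\mathcal S}\G(\cdot,s)(\f_n-\f_m))(r,\cdot)\|_\infty=\|({\mathcal S}\G(\cdot,s_0)(\g_n-\g_m))(r,\cdot)\|_\infty\le K(r-s_0)^{-1/2}\|\g_n-\g_m\|_\infty,
\end{equation*}
which vanishes (uniformly for $r\ge s_0+\delta$) by compactness of $\G(s_0,s)$. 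This avoids proving any global gradient convergence. You actually have this tool in hand—\eqref{eq:system_system_gradient_estimate} with the shifted base point—but you invoke the wrong pair of results for it. Finally, since you extract a subsequence for each $\delta$ (or each $s_0$), a diagonal argument over a sequence $\delta_h\downarrow 0$ (the paper iterates over $s_0,s_1,\ldots$) is needed to produce a single Cauchy subsequence; this is left implicit in your proposal.
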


\begin{proof}
To simplify the notation, we set $\uu:=\G(\cdot,s)\f$ and $T:=\sup J$. Moreover, by $c$ we will denote a positive constant, which
is independent of $j$, $n$, $t$ and $\f$, and may vary from line to line.

As a first step, we show that the integral in the right-hand side of \eqref{repr_formula_intro} is well defined when $\f\in B_b(\Rd;\R^m)$. Since
$\|G(t,s)\|_{\mathcal{L}(C_b(\Rd))}\le 1$ for any $I\ni s\le t$, we just prove that the function
$r\mapsto \|({\mathcal S}\G(\cdot,s)\f)(r,\cdot)\|_{\infty}$  belongs to $L^1((s,t))$ for any $t \in (s,T)$.
The boundedness of $row_{\bar k}C$ and \eqref{stima-evol-op} yield that
$\langle row_{\bar k}C,\uu\rangle \in C_b([s,T]\times\Rd)$ and $\|\langle row_{\bar k}C,\uu\rangle\|_{\infty}\le c_{s,T}\|row_{\bar k}C\|_\infty\|\f\|_\infty$.
Moreover, if Hypotheses \ref{bdd_pot} are satisfied, then from \eqref{eq:system_system_gradient_estimate}, we get
\begin{align*}
\bigg |\sum_{i=1}^d \langle row_{\bar k}\tilde{B}_i(r,\cdot), D_i\uu(r,\cdot)\rangle\bigg |
\le & c\bigg (\sum_{i=1}^d\sum_{j=1}^m|\tilde{B}_i(r,\cdot)_{{\bar k}j}|^2 (D_i u_j(r,\cdot))^2\bigg )^{1/2}
\le  c\lambda_M(r,\cdot)|J_x \uu(r,\cdot)|\notag\\
\le &c\|M(r,\cdot)(J_x \uu(r,\cdot))^T\|_{\infty}
\le  c(r-s)^{-1/2}\|\f\|_{\infty}
\end{align*}
for any $r \in (s,T)$.
On the other hand, if Hypotheses \ref{bdd_pot_drift} hold true, then, from \eqref{eq:system_system_gradient_estimate}, with $M=I$, it follows that
\begin{align*}
\bigg |\sum_{i=1}^d \langle row_{\bar k}\tilde{B}_i(r,\cdot), D_i\uu(r,\cdot)\rangle\bigg |
\le c\max_{1\le i\le d}\|row_{\bar k}B_i\|_{\infty}(r-s)^{-1/2}\|\f\|_{\infty}.
\end{align*}
In both the cases, the function $({\mathcal S}\G(\cdot,s)\f)(r,\cdot)$ is bounded in $\Rd$ for any $r \in (s,T)$ and
\begin{equation}
\|({\mathcal S}\G(\cdot,s)\f)(r,\cdot)\|_\infty\le K(r-s)^{-1/2}\|\f\|_{\infty},\qquad\;\,r \in (s,+\infty)\cap J,
\label{poirot}
\end{equation}
for some constant $K$, depending on $m$, $d$, $s$, $J$, $\|row_{\bar k}C\|_{\infty}$, and also on $\|row_{\bar k}\tilde{B}_i\|_\infty$, $(i=1, \ldots,d)$ when Hypotheses \ref{bdd_pot_drift} hold true.
 This, in particular, yields that the integral in the right-hand side of formula \eqref{repr_formula_intro} is well defined.

We now split the rest of the proof into three steps. The first two steps are devoted to prove formula \eqref{repr_formula_intro} for functions $\f\in C^{2+\alpha}_c(\Rd;\R^m)$.
Once it is proved for such functions, this formula can be easily extended to functions in $B_b(\Rd;\R^m)$ by density.
Indeed, if $\f\in B_b(\Rd;\R^m)$, then we fix a sequence $(\f_n)\subset C^{2+\alpha}_c(\Rd;\R^m)$, bounded with respect to the sup-norm and converging to $\f$ almost
everywhere in $\Rd$. Writing \eqref{repr_formula_intro} with $\f$ being replaced by $\f_n$ and letting $n\to +\infty$, from Corollary \ref{coro-strong}(ii) we conclude that
$\G(t,s)\f_n$ and $G(t,s)f_{n,\bar k}$ converge to $\G(t,s)\f$ and $G(t,s)f_{\bar k}$, respectively, locally uniformly in $\Rd$, as $n\to +\infty$.
Similarly, ${\mathcal S}\G(\cdot,s)\f_n$ converges locally uniformly in $(s,+\infty)\times\Rd$ to
${\mathcal S}\G(\cdot,s)\f$, as $n\to +\infty$. Taking
\eqref{poirot} (with $\f$ being replaced by $\f_n$) and the contractivity of the evolution operator $G(t,s)$ into account, we can apply the
dominated convergence theorem to infer that the integral term in \eqref{repr_formula_intro} converges to the corresponding one with $\f_n$ being
replaced by $\f$, and formula \eqref{repr_formula_intro} follows.

{\em Step 1.} Here, we assume Hypotheses \ref{bdd_pot_drift} and
denote by $\G_n^{\mathcal N}(t,s)$ the evolution operator associated with the operator $\bm{\mathcal A}$ in $C_b(B_n;\R^m)$,
with homogeneous Neumann boundary conditions, which, by Remark \ref{rem-4.7}, satisfies the gradient estimate
$\|\G_n^{\mathcal N}(t,s)\f\|_{C_b(B_n;\R^m)}+\sqrt{t-s}\|J_x\G_n^{\mathcal N}(t,s)\f\|_{C_b(B_n;\R^m)}\le c\|\f\|_{\infty}$
for any $n\in\N$.

As it has been stressed in Remark \ref{rem-neumann}, $G_n^{\mathcal N}(\cdot,s)\f$ tends to $\uu$ locally uniformly in $\Rd$, as $n\to +\infty$. If we split $(\bm{\mathcal A}\tilde\uu_n)_{\bar k}=\tilde {\mathcal A}u_{n,k}+\Phi_{n,\bar k}$,
where $\Phi_{n,\bar k}=\sum_{j=1}^d\langle row_{\bar k} \tilde B_j,D_j\tilde\uu_n\rangle+\langle row_{\bar k} C, \tilde\uu_n\rangle$, then we can write
\begin{equation}
\tilde u_{\bar k}(t,x)= (G_n^{\mathcal N}(t,s)f_{\bar k})(x)+\int_s^t(G_n^{\mathcal N}(t,r)\Phi_{n,\bar k}(r,\cdot))(x)dr,\qquad\;\,t\in (s,T),\;\,x\in\overline{B}_n.
\label{repr-form}
\end{equation}
Clearly, $\tilde u_{\bar k}$ and $G_n^{\mathcal N}(\cdot,s)f_{\bar k}$ converge to $(\G(t,s)\f)_{\bar k}$ and $G(\cdot,s)f_{\bar k}$, respectively, locally uniformly
in $(s,+\infty)\times\Rd$. As far as the integral term in \eqref{repr-form} is concerned, the boundedness of $row_{\bar k}C$ and $row_{\bar k}B_i$ ($i=1,\ldots,d$) and the above gradient estimate,
imply, first, that $\|\Phi_{n,\bar k}(r,\cdot)\|_{C_b(B_n)}\le c(r-s)^{-1/2}\|\f\|_{\infty}$
and, then, that $\|G_n^{\mathcal N}(t,r)\Phi_{n,\bar k}(r,\cdot)\|_{C_b(B_n)}\le c(r-s)^{-1/2}\|\f\|_{\infty}$ for any $r\in (s,T)$ and any $n\in\N$, since
each operator $G_n^{\mathcal N}(t,r)$ is contractive.
Next, we estimate
\begin{align}
&|G_n^{\mathcal N}(t,r)\Phi_{n,\bar k}(r,\cdot)-G^{\mathcal N}(t,r)({\mathcal S}\G(\cdot,s)\f)(r,\cdot)|\notag\\
\le &G_n^{\mathcal N}(t,r)|\Phi_{n,\bar k}(r,\cdot)-({\mathcal S}\G(\cdot,s)\f)(r,\cdot)|
+|G_n^{\mathcal N}(t,r)({\mathcal S}\G(\cdot,s)\f)(r,\cdot)-G(t,r)({\mathcal S}\G(\cdot,s)\f)(r,\cdot)|.
\label{chain}
\end{align}
As $n\to +\infty$, the last term in the right-hand side of \eqref{chain} vanishes, locally uniformly in $\Rd$, due to Remark \ref{rem-neumann}.
To show that also the first term  vanishes, we claim that, for any $R>0$ and $\varepsilon>0$ and $r\in (s,t)$, there exists $M\in\N$ such that
$G_n^{\mathcal N}(t,r)\chi_{\Rd\setminus B_M}\le\varepsilon$ in $B_R$, for any $n$ sufficiently large.
Once the claim is proved, we estimate
\begin{align*}
\|G_n^{\mathcal N}(t,r)g_n(r,\cdot)\|_{C_b(B_R)}
\le &\|G_n^{\mathcal N}(t,r)(\chi_{B_M}g_n(r,\cdot))\|_{C_b(B_R)}
+\|G_n^{\mathcal N}(t,r)(\chi_{\Rd\setminus B_M}g_n(r,\cdot))\|_{C_b(B_R)}\\
\le &\|g_n(r,\cdot)\|_{C_b(B_M)}
+\|G_n^{\mathcal N}(t,r)\chi_{\Rd\setminus B_M}\|_{C_b(B_R)}\sup_{n\in\N}\|g_n(r,\cdot)\|_{C_b(B_n)}\\
\le &\|g_n(r,\cdot)\|_{C_b(B_M)}+\varepsilon c(r-s)^{-1/2}\|f\|_{\infty},
\end{align*}
where $g_n=\Phi_{n,\bar k}-{\mathcal S}\G(\cdot,s)\f$.
Since $g_n$ vanishes, locally uniformly in $(s,+\infty)\times\Rd$ as $n\to +\infty$,
$\limsup_{n\to +\infty}\|G_n^{\mathcal N}(t,r)g_n(r,\cdot)\|_{C_b(B_R)}\le \varepsilon c(r-s)^{-1/2}\|f\|_{\infty}$ and, letting $\varepsilon\to 0^+$, we conclude that
$G_n^{\mathcal N}(t,r)g_n(r,\cdot)$ vanishes uniformly in $B_R$ as $n\to +\infty$, for any $r\in (s,T)$.

To prove the claim, we fix a sequence $(\psi_n)\subset C_b(\Rd)$ satisfying $\chi_{\R^d\setminus B_n}\le \psi_n\le\chi_{\Rd\setminus B_{n-1}}$.
Since $\psi_n$ vanishes locally uniformly in $\Rd$, by \cite[Prop. 3.1]{KunLorLun09Non}, $G(t,r)\psi_n$ tends to $0$ locally uniformly in $\Rd$, for any $r\in (s,t)$. Therefore,
for any fixed $\varepsilon,R>0$, there exists $M=M(r)>0$ such that $G(t,r)\psi_M\le \varepsilon/2$ in $B_R$.
Since $G_n^{\mathcal N}(t,r)\psi_M$ converges to $G(t,r)\psi_M$ in $B_R$, we can determine $n_0=n_0(r)\in\N$ such that
$\|G(t,r)\psi_M-G_n^{\mathcal N}(t,r)\psi_M\|_{C_b(B_R)}\le\varepsilon/2$ for any $n\ge n_0$, which
yields the claim.

{\em Step 2.} Here, we assume Hypotheses \ref{bdd_pot}. In such a case, the argument in Step 1 does not work, since
from the proof of Theorem \ref{prop:system_gradient_estimates} we now just infer
that $\sqrt{t-s}\|\vartheta_n\sqrt{Q}(t,\cdot)J_x\uu_n(t,\cdot)\|_{C_b(B_n;\R^m)}+\|\uu_n(t,\cdot)\|_{C_b(B_n;\R^m)}\le c\|\f\|_{\infty}$ for any $n\in\N$,
where, as usually, $(\vartheta_n)$ is a sequence of cut-off functions, such that ${\rm supp}(\vartheta_n)\subset B_n$ for any $n\in\N$, and
$\uu_n:=\G_n^{\mathcal D}(\cdot,s)\f$. From this inequality we can not deduce the crucial estimate
$\|\Phi_{n,\bar k}(r,\cdot)\|_{C_b(B_n)}\le c(r-s)^{-1/2}\|\f\|_{\infty}$. To overcome this difficulty, we use a slightly different approximation argument.
We denote by ${\bf\Psi}_n$ the function whose components are $\Psi_{n,i}=\vartheta_n\sum_{j=1}^d\langle row_i \tilde
B_j,D_j\uu_n\rangle+\langle row_i C, \uu_n\rangle$,
for any $i=1,\ldots,m$.
Since $\uu_n\in C^{1+\alpha/2,2+\alpha}((s,T)\times B_n;\R^m)$
(see \cite[Thm. IV.5.5]{LadSolUra68Lin}),
the function ${\bf\Psi}_n$ belongs to $C^{\alpha/2,\alpha}((s,T)\times B_n;\R^m)$ and is compactly supported in $[s,T]\times B_n$.
Hence, the same theorem shows that, for any $n\in\N$ such that ${\rm supp}(\f)\subset B_n$,
there exists a unique function $\ww_n\in C^{1+\alpha/2,2+\alpha}((s,T)\times B_n;\R^m)$, which satisfies
$D_t \ww_n=\tilde{\bm \A}\ww_n+{\bf\Psi}_n$ in $(s,T)\times\Rd$, $\ww_n(s,\cdot)=\f$ and it vanishes on $(s,T)\times \partial B_n$.
For any $i=1,\ldots,m$, the component $w_{n,i}$ of $\ww_n$ can be represented through the formula
\begin{equation}\label{formula_natale}
w_{n,i}(t,x)= (G_n^{\mathcal D}(t,s)f_i)(x)+\int_s^t(G_n^{\mathcal D}(t,r)\Psi_{n,i}(r,\cdot))(x)dr,\qquad\;\,t\in (s,T),\;\,x\in\overline{B}_n.
\end{equation}

We claim that $\uu$ is the limit of the sequence $(\ww_n)$. By
Theorem \ref{thm-A2}, $\|\ww_n\|_{C^{1+\alpha/2,2+\alpha}(K;\R^m)}\leq c$ for any compact set
$K\subset(s,T)\times\Rd$ and $n$ large enough. Hence, we can extract a subsequence $(\ww_{n_j})$ which, as $j\to +\infty$, converges in
$C^{1,2}([s+\tau^{-1},T]\times \overline{B}_\tau;\R^m)$, for any $\tau>(T-s)^{-1}$, to some function $\ww \in
C^{1+\alpha/2,2+\alpha}_{\rm loc}((s,T)\times \Rd;\R^m)$. Since $\uu_n$ converges to $\uu$ in $C^{1,2}([s+\tau^{-1},T]\times \overline{B}_{\tau};\R^m)$ (see the proof of Theorem \ref{thm:existence_solution}),
we conclude that $D_t \ww=\tilde{\bm \A}\ww+\bm{\mathcal S}\uu$ in $(s,T)\times \Rd$ where
${\mathcal S}_k\uu= \sum_{i=1}^d\langle row_k \tilde B_i,D_i\uu\rangle+\langle
row_kC, \uu\rangle$ for any $k=1,\ldots,m$.
To conclude that $\ww=\uu$, it suffices to show that $\ww$ can be extended by continuity at $t=s$, where it equals $\f$.
For this purpose, we follow the same strategy
as in the proof of Theorem \ref{thm:existence_solution}, localizing the problem in the ball $B_R$ for any $R>0$. To make the arguments therein contained work, we need to show that
$|{\g}_{n_j}(t,x)|\le c(t-s)^{-1/2}\|\f\|_\infty$ for any $t\in (s,s+1)$,
$x\in B_R$, where ${\g}_{n_j}=-\ww_{n_j}\bm \tilde \A\eta-2J_x\ww_{n_j}(Q\nabla\eta)
+\eta{\bf \Psi}_{n_j}$ ($n_j>M$) and $\eta$ is as in the proof of the quoted theorem. The term ${\bf \Psi}_{n_j}$ can be estimated using the proof of Theorem \ref{prop:system_gradient_estimates},
which shows that $\|\uu_n(t,\cdot)\|_{C_b(B_n;\R^m)}+\sqrt{t-s}\|\vartheta_nJ_x\uu_n(t,\cdot)\|_{C_b(B_n;\R^m)}\le c\|\f\|_{\infty}$ for any $t\in (s,T)$ and $n\in\N$,
and implies that $\|{\bf\Psi}_{n_j}(t,\cdot)\|_{C_b(B_{n_j};\R^m)}\le c(t-s)^{-1/2}\|\f\|_{\infty}$ for any $j\in\N$.
As far as the function $\ww_{n_j}$ is concerned, we observe that the operator $\tilde{\mathcal A}$ satisfies Hypotheses \ref{Hyp-stime-gradiente}.
Therefore, $\sqrt{t-s}\|\vartheta_n\nabla_xG_{n_j}^{\mathcal D}(t,s)g\|_{C_b(B_n;\R^m)}\le c\|g\|_{C_b(B_n;\R^m)}$ for any $t\in (s,T)$ and any $g\in C_c(B_n)$.
Differentiating formula \eqref{formula_natale} with respect to $x$, taking the sup norm in $B_R$ of both sides, and using
the previous two estimates, we conclude that
\begin{align*}
\|J_x\ww_{n_j}(t,\cdot)\|_{C_b(B_{R})}&\le
\sum_{i=1}^d\|\nabla_xG_{n_j}^{\mathcal D}(t,s)f_i\|_{C_b(B_R)}+
\sum_{i=1}^d\int_s^t\|\nabla_xG_{n_j}^{\mathcal D}(t,r)\Psi_{n_j,i}(r,\cdot)\|_{C_b(B_R)}dr\\
&\le
c\bigg(\frac{\|\f\|_{\infty}}{\sqrt{t-s}}+\int_s^t\frac{\|{\bf
\Psi}_{n_j}(r,\cdot)\|_{C_b(B_{n_j};\R^m)}}{\sqrt{t-r}}dr\bigg )\le c \frac{\|\f\|_{\infty}}{\sqrt{t-s}}
\end{align*}
for any $t \in (s, T]$ and $n_j>R$.
The wished estimate on $\g_{n_j}$ follows. Since the above arguments can be applied to any convergent subsequence of $({\bf w}_n)$,
the whole sequence $({\bf w}_n)$ converges to $\uu$.

We now fix $i=\bar k$ in \eqref{formula_natale} and let $n\to +\infty$.
Since $\Psi_{n,i}$ converges to ${\mathcal S}\G(\cdot,s)\f$, locally uniformly in $(s,+\infty)\times\Rd$, as $n\to +\infty$, and
$\|\Psi_{n,i}(t,\cdot)\|_{C_b(B_n)}\le c(t-s)^{-1/2}\|\f\|_{\infty}$ for any $t\in (s,T)$, we can
repeat the same arguments as in Step 1, with $G_n^{\mathcal N}(t,s)$ being replaced by
the operator $G_n^{\mathcal D}(t,s)$, and complete the proof of \eqref{repr_formula_intro}.

{\em Step 3.} Let $(f_j)\subset C_b(\Rd)$ be bounded sequence and set $\f_j=f_j {\bf e}_{\bar k}$ for any $j\in\N$. Without loss of generality, we assume that
$\|f_j\|_{\infty}\le 1$ for any $j\in\N$.
We fix $(t,s)\in\Sigma_J$ and $s_0\in [s,t]$ satisfying $s_0-s\le (8K)^{-2}$, where $K$ is the constant in \eqref{poirot}.
Since $\G(s_0,s)$ is compact in $C_b(\Rd; \R^m)$, there exists a subsequence $(\G(s_0,s)\f_{j_n^0})$ converging uniformly in $\Rd$, as $n \to +\infty$, to some function ${\bf g}_{s_0}\in C_b(\Rd;\R^m)$.
Clearly, $(\G(t,s){\bf f}_{j_n^0})_{\bar k}$ converges uniformly to the $\bar k$-th component of $G(t,s_0){\bf g}_{s_0}$. Moreover, recalling that
$G(t,s)$ is a contractive evolution operator, we can estimate
\begin{align}
\sup_{x\in\Rd}\bigg |\int_s^t (G(t,r)({\mathcal S}\G(\cdot,s)(\f_{j_n^0}-\f_{j_m^0}))(r,\cdot))(x)dr\bigg |\le &
\int_s^{s_0}\|({\mathcal S}\G(\cdot,s)(\f_{j_n^0}-\f_{j_m^0}))(r,\cdot)\|_{\infty}dr\notag\\
&+\int_{s_0}^t\|({\mathcal S}\G(\cdot,s)(\f_{j_n^0}-\f_{j_m^0}))(r,\cdot)\|_{\infty}dr.
\label{star-aru}
\end{align}
Estimate \eqref{poirot} and our choice of $s_0$ implies that the first term in the right-hand side of \eqref{star-aru} does not
exceed $1/2$. On the other hand, ${\mathcal S}\f(r,\cdot)={\mathcal S}\G(\cdot,s_0)\G(s_0,s)(\f_{j_n^0}-\f_{j_m^0})$
and applying \eqref{poirot}, with $\G(\cdot,s)$ being replaced by $\G(\cdot,s_0)$, we estimate the last term in the right-hand side of \eqref{star-aru}
from above by $c\|G(s_0,s)(\f_{j_n^0}-\f_{j_m^0})\|_{\infty}$. Putting everything together, we conclude that
\begin{eqnarray*}
\sup_{x\in\Rd}\bigg |\int_s^t (G(t,r)({\mathcal S}\G(\cdot,s)(\f_{j_n^0}-\f_{j_m^0}))(r,\cdot))(x)dr\bigg |
\le\frac{1}{2}+c\|G(s_0,s)(\f_{j_n^0}-\f_{j_m^0})\|_{\infty},
\end{eqnarray*}
which, combined with \eqref{repr_formula_intro}, \eqref{stima-evol-op}, shows that
$\|G(t,s)(f_{j^0_n}-f_{j^0_m})\|_{\infty}\le \frac{1}{2}+c\|\G(s_0,s)({\bf f}_{j^0_n}-{\bf f}_{j^0_m})\|_{\infty}$.
The last term in the right-hand side of this inequality vanishes as $n,m \to +\infty$. Therefore, there exists $N_0\in \N$ such that $\|G(t,s)(f_{j^0_n}-f_{j^0_m})\|_{\infty}\le 1$ for any $n, m\ge N_0$.

Now, we fix $s_1\in (s,t)$ such that $s_1-s\le (16K)^{-2}$ and repeat the same construction as above with $s_1$ replacing $s_0$.
We thus determine $N_1\in\N$ and a subsequence $(f_{j_n^1})$ of $(f_{j_n^0})$ such that
$\|G(t,s)(f_{j^1_n}-f_{j^1_m})\|_{\infty}\le 1/2$ for any $m,n\ge N_1$.
Iterating this argument, for any $h\in\N$ we can determine a subsequence $(f_{j^h_n})\subset (f_{j^{h-1}_n})$ and an integer $N_h$ such that
\begin{equation}
\|G(t,s)(f_{j^h_n}-f_{j^h_m})\|_{\infty}\le 2^{-h},\qquad\;\,m,n\ge N_h.
\label{guarito}
\end{equation}

Now, we are almost done and, to conclude the proof, we consider the diagonal sequence $(\psi_n)$ with $\psi_n=f_{j^n_n}$ for any $n\in\N$.
We claim that $G(t,s)\psi_n$ converges uniformly in $\Rd$. For this purpose, we fix $\varepsilon>0$ and $h\in\N$ such $2^{-h}\le\varepsilon$. We also set $N=\max\{h,N_h\}$. With
this choice of $N$, and recalling that $\psi_n, \psi_m\in (f_{j^h_p})$ if $n,m\ge h$, from \eqref{guarito} we deduce that $\|G(t,s)(\psi_n-\psi_m)\|_{\infty}\le \varepsilon$ for any $m,n\ge N$,
which, clearly, shows that $(G(t,s)\psi_n)$ is a Cauchy sequence.
\end{proof}

\subsection{Semilinear systems and systems of markovian forward backward stochastic differential equations}

At first, we consider a Cauchy problem for a semilinear system of parabolic equations and we show that, under suitable assumptions, it admits a mild solution $\uu$.
The special form of the nonlinear part allows us to connect the Cauchy problem with a system of Markovian backward stochastic differential equations (BSDE's for short); in particular, we prove that a solution $(\Y,\Z)$ to the system of BDSE's exists, and that it is possible to write this solution in terms of the function $\uu$. Finally, we investigate the existence of a Nash equilibrium for a non-zero sum stochastic differential game.
Hereafter, we assume Hypotheses \ref{hyp_comp}.

\subsubsection{Semilinear systems}
In this subsection we deal with the backward semilinear Cauchy problem
\be
\left\{\begin{array}{lll}D_t\uu(t,x)+(\bm{\mathcal A}\uu)(t,x)=({\boldsymbol \Psi}(\uu))(t,x), &
t\in[0,T), &
x\in\Rd, \\[1mm]
\uu(T,x)=\g(x), & & x\in\Rd,
\end{array}
\right.
\label{eq:davide-2}
\tag{SL-CP}
\ee
where $\bm{\mathcal A}$ is the operator in \eqref{operat-A} and
$({\boldsymbol\Psi}(\uu))(t,x)={\boldsymbol\psi}(x,\sqrt{Q}(t,x)J_x\uu(t,x))$ for any $t\in [0,T)$ and $x\in\Rd$. We assume Hypotheses \ref{Hyp-stime-gradiente} with $M=\sqrt{Q}$
and the following conditions on $\g$ and $\boldsymbol\psi$.

\begin{hyp}{\label{hyp:dickens}}
The function $\bf g$ belongs to $C_b(\Rd;\R^m)$ and there exist positive constants $c$ and $\beta\in(0,1)$ such that
$|\boldsymbol\psi(x_1,z_1)-\boldsymbol \psi(x_2,z_2)|\leq c(1+|z_1|+|z_2|)(|x_1-x_2|^\beta+|z_1-z_2|^\beta)$ and
$|\boldsymbol\psi(x,z)|\leq c(1+|z|)$ for any $x,x_1,x_2\in\Rd$ and $z,z_1,z_2\in\R^{md}$.
\end{hyp}

\begin{rmk}
{\rm
Without loss of generality, we can suppose that $\alpha$ (see Hypotheses \ref{base}) and $\beta$ coincide, and hereafter we simply denote them by $\alpha$.
}
\end{rmk}

We prove the existence of a mild solution $\uu$ of \eqref{eq:davide-2},
i.e., a function $\uu\in \bm{\mathcal K}_T$ which satisfies
the equation
\begin{equation}
\uu(t,x)=(\widehat\G(T-t,0)\g)(x)-\int_t^T(\widehat\G(T-t,T-s)({\boldsymbol\Psi}(\uu))(s,\cdot))(x)ds,\qquad\;\,(t,x)\in [0,T]\times\Rd,
\label{eq:davide-5}
\end{equation}
Here, $\bm{\mathcal K}_T$ is the set of all functions $\uu\in C_b([0,T]\times \R^d;\R^m)\cap
C^{0,1}([0,T)\times\R^d;\R^m)$ such that
$\|\uu\|_{\bm{\mathcal K}_T}:=\|\uu\|_\infty+[\uu]_{\bm{\mathcal K}_T}:=\|\uu\|_\infty+\sup_{t\in[0,T)}\sqrt{T-t}\|\sqrt{Q}(t,\cdot)(J_x\uu(t,\cdot))^T\|_{\infty}<+\infty$.
Moreover, $\widehat\G(t,s)$ is the evolution operator associated with the family $\{\bm{\mathcal A}(T-t): t\in [0,T]\}$.

We first approximate $\g$ and ${\boldsymbol\psi}$ by two sequences $({\bf g}_n)$ and $({\boldsymbol\psi}^{(n)})$ of globally Lipschitz continuous functions,
defined as follows: ${\bf g}_n(x):=(\varrho_n \star \g)(x)$ and
${\boldsymbol\psi}^{(n)}(x,z):=\vartheta(n^{-1}|z|)(\varrho_n\star {\boldsymbol\psi})(x,z)$ for any $n\in\N$,
 $x\in\Rd$ and $z\in\R^{md}$, where $\star$ denotes the convolution
operator, $\varrho_n(x,z)=\vartheta(n^{-1}|x|)\vartheta(n^{-1}|z|)$ for any $(x,z)\in\Rd\times\R^{md}$, and $\vartheta:\R\to\R$ is smooth and satisfy $\chi_{B_1}\le\vartheta\le\chi_{B_2}$.

The Banach fixed-point theorem yields the existence and uniqueness of a solution $\uu_n$ to the equation \eqref{eq:davide-5} with $\boldsymbol\psi$
replaced by ${\boldsymbol\psi}^{(n)}$ for any $n\in\N$. With some more effort, we then prove that there exists a subsequence $(\uu_{k_n})\subset(\uu_n)$ which converges to a mild solution to \eqref{eq:davide-2}.

\begin{rmk}
\label{rem:tolkien}
{\rm
\begin{enumerate}[\rm (i)]
\item
Since we are assuming Hypotheses \ref{Hyp-stime-gradiente} with $M=\sqrt{Q}$, the evolution operator $\widehat\G(t,s)$ satisfies the estimate
$\sqrt{t-s}\|\sqrt{Q}(T-t,\cdot)(J_x\widehat\G(t,s)\f)^T\|_{\infty}\le c\|\f\|_{\infty}$ for any $0\le s<t\le T$, $\f\in C_b(\Rd;\R^m)$ and
some positive constant $c$. From Hypothesis \ref{base}, we also deduce that $\sqrt{t-s}\|(J_x\widehat\G(t,s)\f)^T\|_{\infty}\le c\lambda_0^{-1/2}\|\f\|_{\infty}$ for same $s$ and $\f$.
\item
The choices of ${\bf g}_n$ and $\boldsymbol\psi^{(n)}$ are also connected with the application to systems of forward backward stochastic differential equations (FBSDE's in short) of the next subsection, where
we show that the convergence of the subsequence $(\uu_{k_n})$ implies the convergence of a sequence of solutions $({\Y}_{k_n},{\Z}_{k_n})$, to a family of approximated systems of FBSDE's, to a pair of processes $(\Y,\Z)$.
\end{enumerate}
}
\end{rmk}

\begin{prop}
For any $n\in\N$ there exists a unique mild solution $\uu_n\in \bm{\mathcal K}_T$ to the Cauchy problem \eqref{eq:davide-2},
with $({\boldsymbol\psi},\g)$ being replaced by $({\boldsymbol\psi}^{(n)},\g_n)$. Moreover, there exists a positive constant $K$, independent of $n$, such that $\|\uu_n\|_{\bm{\mathcal K}_T}\le K$
for any $n\in\N$.
\label{prop:davide-1}
\end{prop}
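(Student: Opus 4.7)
The plan is to obtain $\uu_n$ via the Banach fixed-point theorem applied to the map $\Gamma_n:\bm{\mathcal K}_T\to\bm{\mathcal K}_T$ given by
\begin{equation*}
(\Gamma_n\vv)(t,x):=(\widehat\G(T-t,0)\g_n)(x)-\int_t^T(\widehat\G(T-t,T-s)\boldsymbol\Psi^{(n)}(\vv)(s,\cdot))(x)\,ds,
\end{equation*}
where $\boldsymbol\Psi^{(n)}(\vv)(s,x):=\boldsymbol\psi^{(n)}(x,\sqrt{Q}(s,x)(J_x\vv(s,x))^T)$. The first summand lies in $\bm{\mathcal K}_T$ by \eqref{stima-evol-op} and Remark \ref{rem:tolkien}(i); the integrand is controlled by the boundedness of $\boldsymbol\psi^{(n)}$ and by the same gradient bounds, so $\Gamma_n$ sends $\bm{\mathcal K}_T$ into itself.

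\textbf{Local contraction and iteration.} Since $\boldsymbol\psi^{(n)}$ is globally Lipschitz continuous with some constant $L_n$ (due to the truncation $\vartheta(n^{-1}|z|)$ and the mollification by $\varrho_n$), for $\vv_1,\vv_2\in\bm{\mathcal K}_T$ one obtains
\begin{equation*}
|\boldsymbol\Psi^{(n)}(\vv_1)-\boldsymbol\Psi^{(n)}(\vv_2)|(s,\cdot)\le L_n(T-s)^{-1/2}[\vv_1-\vv_2]_{\bm{\mathcal K}_T}.
\end{equation*}
Combining this with the two gradient estimates of Remark \ref{rem:tolkien}(i) and the Beta identity $\int_t^T(s-t)^{-1/2}(T-s)^{-1/2}ds=\pi$, a direct computation on a time slice $[t_0,T]$ yields $\|\Gamma_n\vv_1-\Gamma_n\vv_2\|\le C_n\sqrt{T-t_0}\,\|\vv_1-\vv_2\|$ in the natural norm on that slice. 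Choosing $T-t_0=\tau(n)$ small enough makes $\Gamma_n$ a strict contraction and so produces a unique local fixed point; a finite backward iteration extends this to all of $[0,T]$, yielding $\uu_n\in\bm{\mathcal K}_T$. Uniqueness in $\bm{\mathcal K}_T$ follows from the very same estimate.

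\textbf{The $n$-uniform bound — main obstacle.} The delicate point is $\|\uu_n\|_{\bm{\mathcal K}_T}\le K$ with $K$ independent of $n$, since the contraction constant $L_n$ blows up as $n\to+\infty$. My plan is to exploit the fact that the sublinear bound $|\boldsymbol\psi(x,z)|\le c(1+|z|)$ of Hypothesis \ref{hyp:dickens} is inherited uniformly in $n$ by $\boldsymbol\psi^{(n)}$: both the convolution with $\varrho_n$ and the cut-off $\vartheta(n^{-1}|z|)$ alter the constant only by fixed factors. Applying $\sqrt{Q}(t,\cdot)(J_x\cdot)^T$ to the fixed-point equation and using Remark \ref{rem:tolkien}(i) together with this uniform growth, I would derive
\begin{equation*}
\sqrt{T-t}\,\|\sqrt{Q}(t,\cdot)(J_x\uu_n(t,\cdot))^T\|_\infty\le c\|\g\|_\infty+c(T-t)+c\pi\sqrt{T-t}\,\eta_n(t),
\end{equation*}
where $\eta_n(t):=\sup_{r\in[t,T]}\sqrt{T-r}\|\sqrt{Q}(r,\cdot)(J_x\uu_n(r,\cdot))^T\|_\infty$ and all constants are independent of $n$. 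Taking the supremum over $t\in[T-\tau_0,T]$ with $\tau_0$ so small that $c\pi\sqrt{\tau_0}<1/2$ (independently of $n$) absorbs the last term on the left, giving a uniform bound of $\eta_n$ on that slice; paired with an analogous sup-norm estimate, a finite backward iteration (the constants depending only on the interval length, through Theorem \ref{prop:system_gradient_estimates}) closes the argument and delivers $\|\uu_n\|_{\bm{\mathcal K}_T}\le K$ for every $n\in\N$.
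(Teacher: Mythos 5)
Your proposal is essentially correct and follows the same two-step structure as the paper: Banach fixed point for existence and uniqueness of $\uu_n$, followed by exploitation of the $n$-uniform sublinear bound $|\boldsymbol\psi^{(n)}(x,z)|\le c(1+|z|)$ combined with the weighted gradient estimate of Remark \ref{rem:tolkien}(i) to close a uniform a priori bound. The one genuine difference is technical: where the paper invokes the generalized (singular) Gronwall lemma once, on the whole interval $[0,T]$, to absorb the integral term $c\int_t^T(s-t)^{-1/2}\|\sqrt{Q}(s,\cdot)(J_x\uu_n(s,\cdot))^T\|_\infty\,ds$, you instead work on a short terminal slice $[T-\tau_0,T]$ where $c\pi\sqrt{\tau_0}<1/2$ makes the integral term absorbable by hand, and then iterate backward in finitely many steps. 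Both are standard and equivalent; the Gronwall route is shorter, while your slice-by-slice argument is more elementary but requires some bookkeeping you glossed over: on slices away from $T$ there is no singularity $(T-s)^{-1/2}$ in the integrand, so the natural quantity to absorb is the \emph{unweighted} sup $\sup_r\|\sqrt{Q}(r,\cdot)(J_x\uu_n(r,\cdot))^T\|_\infty$ rather than $\eta_n$, and the contribution from the already-controlled slice enters as a bounded inhomogeneous term via $\int(s-t)^{-1/2}(T-s)^{-1/2}ds=\pi$. With that adjustment your iteration closes with an $n$-independent constant, exactly as claimed. The same remark applies to your local-contraction existence argument versus the paper's global Gronwall argument for the contraction: both deliver the unique global fixed point.
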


\begin{proof}
The proof is classical, hence we do not enter too much in details. To enlighten the notation, throughout the proof, we denote by $c$ a positive constant, depending at most on $T$, which may vary from line to line.
Since each function ${\boldsymbol\psi}^{(n)}(x,\cdot)$
is Lipschitz continuous in $\R^{md}$, uniformly with respect to $x\in\R^d$, classical arguments, based on
the Banach fixed-point theorem and the generalized Gronwall lemma, allow us to show that equation
\eqref{eq:davide-5} admits a unique solution, which is defined in $[0,T]\times\Rd$.
To prove that $\sup_{n\in\N}\|\uu_n\|_{\bm{\mathcal K}_T}<+\infty$, we observe that
$|{\boldsymbol\psi}^{(n)}(x,z)|\le c(1+|z|)$ and
$\|\g_n\|_{\infty}\le c\|\g\|_{\infty}$ for any $x\in\Rd$, $z\in\R^{md}$ and $n\in\N$.
From equation \eqref{eq:davide-5}, with ${\boldsymbol\psi}^{(n)}$
replacing ${\boldsymbol\psi}$, and Remark \ref{rem:tolkien}(i) we thus deduce that
\begin{align*}
\sqrt{T-t}\|\sqrt{Q}(t,\cdot)(J_x\uu_n(t,\cdot))^T\|_{\infty}
\le c_T(T-t)^{-\frac{1}{2}}\|\g\|_\infty+c+c\int_t^T\frac{\|\sqrt{Q}(t,\cdot)(J_x\uu_n(t,\cdot))^T\|_{\infty}}{\sqrt{s-t}}ds
\end{align*}
for any $t\in [0,T)$.
The generalized Gronwall lemma shows that $\sqrt{T-t}\|\sqrt{Q}(t,\cdot)(J_x\uu_n(t,\cdot))^T\|_{\infty}\leq c$ for any $t\in [0,T)$.
Now, taking the sup-norm (with respect to $x\in\Rd$) of both the sides of the equation
\begin{equation}
\uu_n(t,x)=(\widehat\G(T-t,0)\g_n)(x)\!-\!\int_t^T(\widehat\G(T-t,T-s)({\boldsymbol\Psi}^{(n)}(\uu_n))(s,\cdot))(x)ds,\qquad\,\,(t,x)\in [0,T)\times\Rd
\label{eq:davide-5a}
\end{equation}
and using this last estimate, we deduce that the sup-norm of $\uu_n$ in $[0,T]\times\Rd$ is bounded from above by a positive constant independent of $n\in\N$.
This completes the proof.
\end{proof}

To go further, we need an intermediate result.
For any $n\in\N$, we introduce the space
$\bm{\mathcal X}_{T,n}:=C_b([0,T-n_T^{-1}]\times \overline{B}_{n_T};\R^m)\cap
C^{0,1}([0,T-n_T^{-1}]\times B_{\hat n};\R^m)$,
where $n_T:=[1/T]+n$, and the operator ${\boldsymbol\Gamma}^{(n)}$, defined by
\begin{align*}
({\boldsymbol\Gamma}^{(n)}(\uu))(t,x):=(\widehat\G(T-t,0){\bf g}_n)(x)-
\int_{t+n_T^{-1}}^T(\widehat\G(T-t,T-s)({\boldsymbol\Psi}^{(n)}(\uu))(s,\cdot))(x)ds
\end{align*}
for any $k,n\in\N$, any $(t,x)\in[0,T-n_T^{-1}]\times\Rd$ and
any $\uu\in\bm{\mathcal K}_T$.

\begin{prop}
The operator ${\boldsymbol\Gamma}^{(n)}$ is compact from $\bm{\mathcal K}_T$ in $\bm{\mathcal X}_{T,n}$.
\end{prop}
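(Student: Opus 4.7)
The plan is to apply the Ascoli--Arzelà theorem. Fix a bounded sequence $(\uu_k)\subset\bm{\mathcal K}_T$ with $\|\uu_k\|_{\bm{\mathcal K}_T}\le R$, and set $\vv_k:=\boldsymbol\Gamma^{(n)}(\uu_k)$. The goal is to extract a subsequence converging in $\bm{\mathcal X}_{T,n}$. The first and most useful observation is that the cutoff factor $\vartheta(n^{-1}|z|)$ appearing in the definition of $\boldsymbol\psi^{(n)}$ forces $\boldsymbol\psi^{(n)}$ to be \emph{globally} bounded by a constant $c_n$ depending only on $n$; consequently $\h_k(s,x):=\boldsymbol\psi^{(n)}(x,\sqrt{Q}(s,x)J_x\uu_k(s,x))$ satisfies $\|\h_k(s,\cdot)\|_\infty\le c_n$ uniformly in $k\in\N$ and $s\in[0,T]$. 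In particular, the highly nonuniform bound $[\uu_k]_{\bm{\mathcal K}_T}\le R$ on the spatial gradient of $\uu_k$, which blows up as $s\uparrow T$, plays no role in what follows.

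Write
\[
\vv_k(t,x)=(\widehat\G(T-t,0)\g_n)(x)-\int_{t+n_T^{-1}}^{T}(\widehat\G(T-t,T-s)\h_k(s,\cdot))(x)\,ds.
\]
The first summand is independent of $k$ and, by the regularizing properties of $\widehat\G$, lies in $C^{1+\alpha/2,2+\alpha}_{\rm loc}((0,T]\times\Rd;\R^m)$, so it contributes nothing to the compactness argument. The entire issue is thus the integral term, for which the crucial point is that the lower limit $t+n_T^{-1}$ ensures $(T-t)-(T-s)=s-t\ge n_T^{-1}$ throughout the integration: the two time arguments of $\widehat\G$ are bounded away from one another and we never touch the singularity underlying the gradient estimate \eqref{eq:system_system_gradient_estimate}. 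Combining $\|\widehat\G(T-t,T-s)\h_k(s,\cdot)\|_\infty\le c_{0,T}c_n$ with the interior Schauder estimates of Theorem \ref{thm-A2}, applied to the function $(t,x)\mapsto(\widehat\G(T-t,T-s)\h_k(s,\cdot))(x)$ on $[0,T-n_T^{-1}]\times\overline{B}_{n_T}$ (a set that lies at parabolic distance at least $n_T^{-1}$ from the initial slice $\{T-s\}\times\Rd$), one obtains a uniform estimate
\[
\|\widehat\G(T-\cdot,T-s)\h_k(s,\cdot)\|_{C^{1+\alpha/2,2+\alpha}([0,T-n_T^{-1}]\times\overline{B}_{n_T};\R^m)}\le C(n,T),
\]
with $C(n,T)$ independent of both $k$ and $s\in[t+n_T^{-1},T]$.

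Integrating this uniform bound in $s$ over an interval of length at most $T$ shows that $(\vv_k)$ is equi-bounded in $C^{0,1}([0,T-n_T^{-1}]\times\overline{B}_{n_T};\R^m)$ and that both $\vv_k$ and $\nabla_x\vv_k$ are equi-Hölder continuous in $(t,x)$ with constants depending only on $n$ and $T$. Ascoli--Arzelà then extracts a subsequence of $(\vv_k)$ converging in $\bm{\mathcal X}_{T,n}$, which is the desired compactness. The one technical point demanding care is the differentiation of the integral with respect to $t$: Leibniz' rule yields the boundary contribution $(\widehat\G(T-t,T-t-n_T^{-1})\h_k(t+n_T^{-1},\cdot))(x)$ from the endpoint $s=t+n_T^{-1}$, but the two time arguments of $\widehat\G$ still differ by exactly $n_T^{-1}$, so the same Schauder estimates apply uniformly and this term inherits identical equi-Hölder bounds. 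Once this verification is carried out, all regularity properties transfer from the integrand to $\vv_k$, and the proof is complete.
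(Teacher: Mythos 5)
Your argument is correct and follows the same skeleton as the paper's proof (equiboundedness and equicontinuity in $C^{0,1}$ via the interior Schauder estimates of Theorem \ref{thm-A2}, then Ascoli--Arzel\`a), but the way you control the integrand is genuinely different and simpler. You read the cutoff factor $\vartheta(n^{-1}|z|)$ directly off the definition of $\boldsymbol\psi^{(n)}$ to obtain a \emph{global} sup-bound $\|\boldsymbol\psi^{(n)}\|_\infty\le c_n$, so that $\boldsymbol\Psi^{(n)}(\uu_k)(s,\cdot)$ is bounded uniformly in $s$ and $k$ and the seminorm $[\uu_k]_{\bm{\mathcal K}_T}$, with its $(T-s)^{-1/2}$ blowup, never enters. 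The paper instead keeps the $n$-uniform linear-growth bound $|\boldsymbol\psi^{(n)}(x,z)|\le C(1+|z|)$, feeds in $[\uu_k]_{\bm{\mathcal K}_T}\le R$ and the weighted gradient estimate \eqref{eq:system_system_gradient_estimate} to get $\|\boldsymbol\Psi^{(n)}(\uu_k)(s,\cdot)\|_\infty\le C(1+R(T-s)^{-1/2})$, and then integrates the resulting singular weight. Your route avoids \eqref{eq:system_system_gradient_estimate} and the $\bm{\mathcal K}_T$-seminorm entirely, at the harmless price of an $n$-dependent constant; what the paper's $n$-uniform bookkeeping buys is needed only later, in the convergence argument of Proposition \ref{prop:davide-2}, not in this compactness lemma where $n$ is fixed.

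One overclaim deserves a flag. You assert that the Leibniz boundary term $(\widehat\G(T-t,T-t-n_T^{-1})\h_k(t+n_T^{-1},\cdot))(x)$ ``inherits identical equi-H\"older bounds.'' That is not true as stated: the $\bm{\mathcal K}_T$ norm gives no modulus of continuity in $t$ for $J_x\uu_k$, so the map $s\mapsto\h_k(s,\cdot)$ is not equicontinuous across the family $(\uu_k)$, and hence the $t$-dependence of the boundary term through its second slot is not equi-H\"older. What you actually have --- and what suffices --- is that this boundary term and the integral of $D_tF$ are equi-\emph{bounded} (by your sup-bound on $\h_k$ and Schauder with gap $n_T^{-1}$), which yields equi-Lipschitz control of $\vv_k$ and $J_x\vv_k$ in $t$. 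Alternatively, one can avoid Leibniz altogether by splitting the time increment $(D_i^h\vv_k)(t,\cdot)-(D_i^h\vv_k)(r,\cdot)$ into the moved-window integral over $(r+n_T^{-1},t+n_T^{-1})$, small because the integrand is bounded, plus the fixed-window integral over $(t+n_T^{-1},T)$ against the difference $\widehat\G(T-t,T-s)-\widehat\G(T-r,T-s)$, small by the H\"older-in-time part of the Schauder estimate; this is precisely the decomposition in the paper's proof and closes the argument cleanly.
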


\begin{proof}
We fix $n\in\N$ and a bounded subset
$\bm{\mathcal W}\subset \bm{\mathcal K}_T$. Clearly, we can limit ourselves to proving that
the integral term $\tilde{\boldsymbol\Gamma}^{(n)}$  in the definition of the operator ${\boldsymbol\Gamma}^{(n)}$
is compact. For this purpose, we claim that the families
$\bm{\mathcal I}_{h,i}=\{D_i^h\tilde{\boldsymbol\Gamma}^{(n)}(\ww): \ww\in \bm{\mathcal W}\}$ ($h=0,1$, $i=1,\ldots,d$) are equibounded and equicontinuous. Throughout the proof, we denote by $c_n$
positive constants, which may vary from line to line and depend on $n$.
The equiboundedness of the families ${\mathcal I}_{h,i}$ follows easily from estimates \eqref{stima-evol-op} and Remark \ref{rem:tolkien}(i), taking into account
that $|{\boldsymbol\psi}^{(n)}(x,z)|\le C(1+|z|)$ for any $x\in\R^d$, $z\in\R^{md}$ and some positive constant independent of $n$.
To prove the equicontinuity of $\bm{\mathcal I}_{h,i}$, we fix ${\bf w}\in \bm{\mathcal W}$, $r,t\in [0,T-n_T^{-1}]$, with $t>r$, $x,y\in B_{n_T}$ and we observe that
\begin{align}
&|(D_i^h\tilde{\boldsymbol\Gamma}^{(n)}(\ww))(t,x)-(D_i^h\tilde{\boldsymbol\Gamma}^{(n)}(\ww))(r,y)|\notag\\
\le &|(D_i^h\tilde\Gamma^{(n)}(\ww))(r,x)-(D_i^h\tilde\Gamma^{(n)}(\ww))(r,y)|
+\bigg|\int_{r+n_T^{-1}}^{t+n_T^{-1}}(D_i^h(\widehat\G(T-r,T-s)({\boldsymbol\Psi}^{(n)}(\ww))(s,\cdot)))(x)ds\bigg|\notag \\
&\bigg|\int_{t+n_T^{-1}}^T(D_i^h(\widehat\G(T-t,T-s)-\widehat\G(T-r,T-s))({\boldsymbol\Psi}^{(n)}(\ww)))(s,\cdot))(x)ds\bigg|
\label{belinda-c}
\end{align}
for $h=0,1$ and $i=1,\ldots,d$. Theorem \ref{thm-A2} shows that, for any convex compact set $K_0\subset\Rd$ and any $\delta>0$, there exists a positive
constant $c_{K_0,\delta}$ such that $\|\G(\cdot,t_1)\f\|_{C^{1+\alpha/2,2+\alpha}([t_2,T]\times K;\R^m)}\leq c_{K_0,\delta}\|{\bf f}\|_\infty$ for any ${\bf f}\in C_b(\Rd;\R^m)$ and any $t_1,t_2\in [0,T)$ such that $t_2-t_1\ge\delta$.
Using this inequality and \eqref{eq:system_system_gradient_estimate}, for any $x\in B_{n_T}$ we can estimate the functions under the integral signs in the right-hand side of the previous inequality, respectively, by
$c_n(T-s)^{-1/2}$ for any $s\in (r+n_T^{-1},t+n_T^{-1})$ and by $c_n(t-r)^{\alpha/2}(T-s)^{-1/2}$ for any $s\in (t+n_T^{-1},T)$. Similarly, to estimate the last term in \eqref{belinda-c}
we use the estimate $|(J_xD_i^h\widehat\G(T-r,T-s){\boldsymbol\Psi}^{(k)}(\ww))(s,z)|\leq c_n(T-s)^{-1/2}$ which holds true for any $s\in (r+n_T^{-1},T)$ and any $z\in K$. We thus conclude that
$|(D_i^h\tilde\Gamma^{(n)}(\ww))(t,x)-(D_i^h\tilde\Gamma^{(n)}(\ww))(r,y)|
\le  c_n(\sqrt{t-r}+|x-y|)$.
We have so proved that the families $\bm{\mathcal I}_{h,i}$ ($h=0,1$, $i=1,\ldots,d$) are
equicontinuous in $[0,T-n_T^{-1}]\times B_{n_T}$.
Arzel\`a-Ascoli Theorem allows us to conclude that $\tilde{\boldsymbol\Gamma}^{(n)}$ is compact from $\bm{\mathcal K}_T$ in $\bm{\mathcal X}_{T,n}$.
\end{proof}

\begin{prop}
\label{prop:davide-2}
Up to a subsequence, $(\uu_n)$
and $(J_x\uu_n)$ converge
locally uniformly in $[0,T]\times\Rd$ and $[0,T)\times\Rd$, respectively.
If we denote by $\uu$ the limit of $(\uu_n)$, then
$\uu\in\bm{\mathcal K}_{T}$ and it is a mild solution of \eqref{eq:davide-2}.
\end{prop}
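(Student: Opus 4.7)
The starting point is Proposition~\ref{prop:davide-1}: $K:=\sup_n\|\uu_n\|_{\bm{\mathcal K}_T}<+\infty$, so by Hypothesis~\ref{base}, $\|J_x\uu_n(t,\cdot)\|_\infty\le K\lambda_0^{-1/2}(T-t)^{-1/2}$, and hence by the growth in Hypothesis~\ref{hyp:dickens}, $\|\boldsymbol\Psi^{(n)}(\uu_n)(s,\cdot)\|_\infty\le c(1+K(T-s)^{-1/2})$, an $n$-independent integrable bound on $(0,T)$. My strategy is to extract a convergent subsequence via a Cantor diagonal argument and then pass to the limit in \eqref{eq:davide-5a}.

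For the extraction, fix $N\in\N$ and set $N_T:=[1/T]+N$. For every $n\ge N$ the identity $\uu_n=\boldsymbol\Gamma^{(n)}(\uu_n)$ holds on $[0,T-n_T^{-1}]\times\overline B_{n_T}\supseteq[0,T-N_T^{-1}]\times\overline B_{N_T}$. Inspecting the equiboundedness/equicontinuity estimates in the proof of the previous proposition, one sees that their constants depend only on the $n$-uniform growth $|\boldsymbol\psi^{(n)}(x,z)|\le c(1+|z|)$, on the a priori bound $\|\uu_n\|_{\bm{\mathcal K}_T}\le K$, and on the regularity of $\widehat\G$ away from the diagonal; in particular, they are $n$-independent. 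Hence $\{\uu_n\}_{n\ge N}$ and $\{J_x\uu_n\}_{n\ge N}$ are equibounded and equicontinuous on $[0,T-N_T^{-1}]\times\overline B_{N_T}$. Ascoli-Arzelà together with Cantor diagonalisation over $N$ produce a subsequence $(\uu_{k_n})$ converging, together with its spatial gradient, locally uniformly on $[0,T)\times\Rd$ to some $\uu\in C^{0,1}([0,T)\times\Rd;\R^m)$.

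Passing to the limit in \eqref{eq:davide-5a} is then standard: by Hypothesis~\ref{hyp:dickens}, the locally uniform convergences $\boldsymbol\psi^{(k_n)}\to\boldsymbol\psi$, $\g_{k_n}\to\g$, and $J_x\uu_{k_n}\to J_x\uu$, and Proposition~\ref{prop:convergence_properties} applied to $\widehat\G$, the integrand is dominated by $c(1+K(T-s)^{-1/2})$ and converges pointwise to the corresponding integrand involving $\boldsymbol\psi$, so dominated convergence yields \eqref{eq:davide-5} on $[0,T)\times\Rd$. Letting $n\to\infty$ in $\sqrt{T-t}\|\sqrt{Q}(t,\cdot)(J_x\uu_{k_n}(t,\cdot))^T\|_\infty\le K$ gives $[\uu]_{\bm{\mathcal K}_T}\le K$, and continuity at $t=T$ with $\uu(T,\cdot)=\g$ follows from
\[
|\uu(t,x)-\g(x)|\le|\widehat\G(T-t,0)\g(x)-\g(x)|+c\int_t^T\bigl(1+K(T-s)^{-1/2}\bigr)\,ds,
\]
whose right-hand side vanishes locally uniformly in $x$ as $t\to T^-$ (the first summand by continuity of the evolution operator at the initial time, the second because it is $O(\sqrt{T-t})$). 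Hence $\uu\in\bm{\mathcal K}_T$ is the desired mild solution.

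\textbf{The main obstacle} I anticipate is reconciling the fixed-point relation $\uu_n=\boldsymbol\Gamma^{(n)}(\uu_n)$ with the compactness of $\boldsymbol\Gamma^{(N)}$ for a fixed $N$, since these involve different approximation indices. The resolution hinges on the observation that the constants in the compactness argument of the previous proposition are governed by the $n$-uniform linear growth of $\boldsymbol\psi^{(n)}$ and the a priori $\bm{\mathcal K}_T$-bound on $\uu_n$, both of which are $n$-independent.
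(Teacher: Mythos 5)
There is a genuine gap, and it is exactly at the place you flagged as "the main obstacle." The fixed-point identity you invoke, $\uu_n=\boldsymbol\Gamma^{(n)}(\uu_n)$, is false. Comparing \eqref{eq:davide-5a} with the definition of $\boldsymbol\Gamma^{(n)}$, the integral in $\boldsymbol\Gamma^{(n)}$ is truncated to $[t+n_T^{-1},T]$, so the true relation is $\uu_n=\boldsymbol\Gamma^{(n)}(\uu_n)-\bm{\mathcal E}_n$ with $\bm{\mathcal E}_n(t,x)=\int_t^{t+n_T^{-1}}(\widehat\G(T-t,T-s)(\boldsymbol\Psi^{(n)}(\uu_n))(s,\cdot))(x)\,ds$. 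The truncation is not cosmetic: it is what keeps the time argument of $\widehat\G(T-t,T-s)$ uniformly bounded away from the diagonal, which is exactly what makes the Schauder-type estimates in the previous proposition usable. Consequently your second claim — that the equicontinuity constants from that proposition are $n$-independent — is also incorrect; the paper explicitly denotes them $c_n$ because they deteriorate as the truncation parameter $n_T^{-1}$ shrinks. So "inspecting the estimates" of the previous proposition does not yield $n$-uniform equicontinuity of $(\uu_n)$ and $(J_x\uu_n)$ on $[0,T-N_T^{-1}]\times\overline B_{N_T}$.

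What you would actually need, if you wanted the direct Ascoli--Arzel\`a route, is an $n$-uniform modulus of continuity in $t$ for the untruncated integral $\int_t^T\widehat\G(T-t,T-s)\boldsymbol\Psi^{(n)}(\uu_n)(s,\cdot)\,ds$ and for its spatial gradient. Splitting at $t_2$ and writing $\widehat\G(T-t_1,T-s)-\widehat\G(T-t_2,T-s)=[\widehat\G(T-t_1,T-t_2)-I]\widehat\G(T-t_2,T-s)$ forces you to estimate $\|[\widehat\G(T-t_1,T-t_2)-I]\phi\|$ against a H\"older norm of $\phi$, and $\phi=\widehat\G(T-t_2,T-s)\boldsymbol\Psi^{(n)}(\uu_n)(s,\cdot)$ has blowing-up H\"older norms as $s\to t_2^+$; controlling the resulting near-diagonal integral (and more so its gradient, where the singularity is $(s-t_2)^{-1/2}$ rather than $(s-t_2)^{-\alpha/2}$) is not something the previous proposition hands you. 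That is precisely why the paper takes a different route: it fixes $m$, uses the compactness of $\boldsymbol\Gamma^{(m)}$ (constants allowed to depend on $m$) and a diagonal extraction to make $\boldsymbol\Gamma^{(m)}(\uu_{k_n})$ converge for each $m$; then in Step~1 it shows, via a Cauchy-type decomposition comparing $\boldsymbol\Gamma^{(k_n)}(\uu_{k_n})$ with $\boldsymbol\Gamma^{(k_\ell)}(\uu_{k_n})$, that the diagonal sequence $\boldsymbol\Gamma^{(k_n)}(\uu_{k_n})$ converges; and only then, in Step~2, it uses $\uu_{k_n}=\boldsymbol\Gamma^{(k_n)}(\uu_{k_n})-\bm{\mathcal E}_{n}$ together with the fact that $\bm{\mathcal E}_n$ and $J_x\bm{\mathcal E}_n$ vanish uniformly on compacts. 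Your passage to the limit in \eqref{eq:davide-5a} and the $\bm{\mathcal K}_T$-membership argument are fine once the convergence of $(\uu_{k_n})$ and $(J_x\uu_{k_n})$ is actually secured, but as written the extraction step is not justified.
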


\begin{proof}
Since each operator ${\boldsymbol\Gamma}^{(n)}$ is compact and the sequence $(\uu_n)$, defined in Proposition \ref{prop:davide-1} is bounded
in $\bm{\mathcal K}_T$ by a positive constant $K$, using a diagonal argument,
we can define a subsequence $(\uu_{k_n})\subset (\uu_n)$ such that, for any $m\in\N$, ${\boldsymbol\Gamma}^{(m)}(\uu_{k_n})$ converges to some function
${\boldsymbol\zeta}^{(m)}$ in $\bm{\mathcal X}_{T,m}$, as $n\to +\infty$.

Being rather long, we split the rest of the proof in some steps.

{\em Step 1.} Here, we prove that the sequences $(\boldsymbol\Gamma^{(k_n)}(\uu_{k_n}))$ and $({J_x}\boldsymbol\Gamma^{(k_n)}(\uu_{k_n}))$ converge.
Fix $h\in\N$, $(t,x)\in[0,T-h_T^{-1})\times B_{h_T}$, and $\ell,n,m\in\N$ such that
$n,m\geq\ell>h$. Then, for $\gamma=0,1$ and $i=1,\ldots,d$ we can estimate
$|D^{\gamma}_i{\boldsymbol\Gamma^{(k_n)}}(\uu_{k_n})-D^{\gamma}_i{\boldsymbol\Gamma^{(k_m)}}(\uu_{k_m})|\leq
\bm{\mathcal A}_{m,n,i}^{(\ell,\gamma)}+\bm{\mathcal B}^{(\ell,m)}_{i,\gamma}+\bm{\mathcal B}^{(\ell,n)}_{i,\gamma}+\bm{\mathcal C}^{(\ell,m)}_{i,\gamma}
+\bm{\mathcal C}^{(\ell,n)}_{i,\gamma}+\bm{\mathcal D}_{i,\gamma}^{(m,n)}$,
where
\begin{align*}
&\bm{\mathcal A}_{m,n,i}^{(\ell,\gamma)}(t,x) = |(D^{\gamma}_i{\boldsymbol\Gamma^{(k_\ell)}}(\uu_{k_n}))(t,x)
-(D^{\gamma}_i{\boldsymbol\Gamma^{(k_{\ell})}}(\uu_{k_m}))(t,x)|, \notag\\
&\bm{\mathcal B}^{(\ell,p)}_{i,\gamma}(t,x) =\bigg|\int_{t+1/\hat k_\ell}^{T}(D^{\gamma}_i\widehat\G(T-t,T-s)[{\boldsymbol\Psi}^{(k_{\ell})}(\uu_{k_p})-{\boldsymbol\Psi}^{(k_p)}(\uu_{k_p})])(s,x)ds\bigg |,\\
&\bm{\mathcal C}^{(\ell,p)}_{i,\gamma}(t,x) = \bigg|\int_{t+1/\hat k_p}^{t+1/\hat k_\ell}(D^{\gamma}_i\widehat\G(T-t,T-s){\boldsymbol\Psi}^{(k_p)}(\uu_{k_p}))(s,x)ds\bigg|, \\
&\bm{\mathcal D}_{i,\gamma}^{(m,n)}(t,x) =|(\widehat\G(T-t,0)(\g_{k_n}-\g_{k_m}))(x)|.
\end{align*}

Fix $\varepsilon>0$. Clearly, from the first part of the proof, we conclude that, for any $\ell\in\N$, there exists $N_{\ell}\in\N$ such that
$\bm{\mathcal A}_{m,n,i}^{(\ell,\gamma)}(t,x)\leq\varepsilon/5$, for any $n,m\geq N_{\ell}$.

As far as the term $\bm{\mathcal B}^{(\ell,p)}_{i,\gamma}$ ($p=m,n$) is concerned, we observe that from Remark \ref{rem:tolkien}(i) we obtain
\begin{align}
\bm{\mathcal B}^{(\ell,p)}_{i,\gamma}
\leq &c_T\int_{T-\delta}^T\frac{\|({\boldsymbol\Psi}^{(k_{\ell})}(\uu_{{k_p}}))(s,\cdot)-({\boldsymbol\Psi}^{(k_p)}(\uu_{k_p}))(s,\cdot)\|_\infty}{(s-t)^{\gamma/2}}ds\notag\\
&+c_T\int_{t}^{T-\delta}
\frac{\|({\boldsymbol\Psi}^{(k_{\ell})}(\uu_{k_p}))(s,\cdot)-({\boldsymbol\Psi}^{(k_p)}(\uu_{k_p}))(s,\cdot)\|_\infty}{(s-t)^{\gamma/2}} ds
\label{stima-Bnkgamma}
\end{align}
for any $\delta>0$.
Using the estimate $|{\boldsymbol\psi}^{(n)}(x,z)|\le c(1+|z|)$, which holds for any $x\in\Rd$, $z\in\R^{md}$, $n\in\N$ and some positive constant $c$, we get $\|{\boldsymbol\Psi}^{(k_{\ell})}(\uu_{k_p}(s,\cdot))-{\boldsymbol\Psi}^{(k_p)}(\uu_{k_p}(s,\cdot))\|_{\infty}\le c(1+(T-s)^{1/2}K)$. Therefore,
\begin{align*}
c_T\int_{T-\delta}^T\frac{\|({\boldsymbol\Psi}^{(k_{\ell})}(\uu_{k_p}))(s,\cdot)-({\boldsymbol\Psi}^{(k_p)}(\uu_{k_p}))(s,\cdot)\|_{\infty}}{(s-t)^{\gamma/2}} ds
\leq &c_T\left (h_T^{-1}-\delta\right )^{-\frac{\gamma}{2}}\sqrt{\delta}(2K+\sqrt{\delta}),
\end{align*}
for any $t\in [0,T-h_T^{-1}]$, provided that $\delta<h_T^{-1}$.

As far as the second term in the right-hand side of \eqref{stima-Bnkgamma} is concerned,
we first observe that
\begin{align}
|{\boldsymbol\psi}^{(n)}(x,z)-{\boldsymbol\psi}(x,z)|
\leq c(1+|z|)n^{-\alpha},\qquad\;\,x\in\Rd,\;\,z\in B_n
\label{arena}
\end{align}
for any $n\in\N$, as it easily follows from the equality
\begin{align*}
{\boldsymbol\psi}^{(n)}(x,z)-{\boldsymbol\psi}(x,z)
= &\int_{\Rd}dy_1\int_{\R^{md}}\varrho_n(y_1,y_2)({\boldsymbol\psi}(x-y_1,z-y_2)-{\boldsymbol\psi}(x,z))dy_2,\qquad\;\,x\in\Rd,\;\,z\in B_n
\end{align*}
and Hypotheses \ref{hyp:dickens}. Splitting
${\boldsymbol\Psi}^{(k_p)}-{\boldsymbol\Psi}^{(k_{\ell})}=({\boldsymbol\Psi}^{(k_p)}-{\boldsymbol\Psi})-({\boldsymbol\Psi}^{(k_{\ell})}-{\boldsymbol\Psi})$ and using \eqref{arena}, we can estimate
$\|({\boldsymbol\Psi}^{(k_{\ell})}(\uu_{k_p}))(s,\cdot)-({\boldsymbol\Psi}^{(k_p)}(\uu_{k_p}))(s,\cdot)\|_{\infty}\le ck_{\ell}^{-\alpha}(1+K\delta^{-1/2})$ for any
$s\in [t,T-\delta]$ and $k_{p}\ge k_{\ell}\ge K\delta^{-\gamma/2}$, and we thus conclude that
\begin{align*}
\int_{t}^{T-\delta}
\frac{\|({\boldsymbol\Psi}^{(k_{\ell})}(\uu_{k_p})(s,\cdot)-({\boldsymbol\Psi}^{(k_p)}(\uu_{k_p}))(s,\cdot)\|_\infty}{(s-t)^{\gamma/2}} ds
\le c_Tk_\ell^{-\alpha}(1+K\delta^{-\frac{1}{2}})(T-\delta-t)^{1-\frac{\gamma}{2}}.
\end{align*}
Now, it is easy to check that we can fix $\delta$ small and $\ell_1$ large such that
$\bm{\mathcal B}^{(\ell,m)}_{i,\gamma}\le\varepsilon/5$ and $\bm{\mathcal B}^{(\ell,n)}_{i,\gamma}\le\varepsilon/5$ for $m,n>\ell\ge \ell_1$.

Now we consider $\bm{\mathcal C}^{(\ell,p)}_{i,\gamma}$, which, thanks again to Remark \ref{rem:tolkien}(i), we estimate as follows:
\begin{align*}
\bm{\mathcal C}^{(\ell,p)}_{i,\gamma}(t,x)
& \leq c_T\lambda_0^{-\gamma/2}\int_{t+\frac{1}{k_p}}^{t+\frac{1}{k_\ell}}(t-s)^{-\frac{\gamma}{2}}(1+K(T-s)^{-1/2})ds\le c_T\lambda_0^{-\frac{\gamma}{2}}(1+ \sqrt{h_T}K)k_{\ell}^{\frac{\gamma}{2}-1}
\end{align*}
for any $p>\ell$.
Hence, there exists $\ell_2\in\N$ such that
$\bm{\mathcal C}^{(\ell,p)}_{i,\gamma}\le \varepsilon/5$ in $[0,T-h_T^{-1})\times B_{h_T}$ for any $p>\ell_2$.

As far as $\bm{\mathcal D}_{i,\gamma}^{(m,n)}$ is concerned, since $\g_{k_n}$ converges to $\g$ locally uniformly in $\Rd$, from Proposition \ref{prop:convergence_properties}$(ii)$
and Theorem \ref{thm-A2}, we conclude that there exists $\ell_3\in\N$ such that $\bm{\mathcal D}_{i,\gamma}^{(m,n)}\leq \varepsilon$ for any $n,m\geq \ell_3$.
Summing up, if $m,n\ge\max\{N_{\ell_1\vee\ell_2}, \ell_1, \ell_2, \ell_3\}$, then
$\|{\boldsymbol\Gamma}^{(k_n)}(\uu_{k_n})-{\boldsymbol\Gamma}^{(k_m)}(\uu_{k_m})\|_{\bm{\mathcal X}_h}\leq \varepsilon$,
and we are done.

{\em Step 2.} Here, we show that also $(\uu_{k_n})$ converges in $\bm{\mathcal X}_{T,h}$, for any
$h\in\N$. For this purpose, we observe that, from \eqref{eq:davide-5a} it follows that
$\uu_{k_n}= {\boldsymbol\Gamma^{(k_n)}}(\uu_{k_n})-\bm{\mathcal E}_n$, where
\begin{align*}
\bm{\mathcal E}_n(t,x):=\int_t^{t+1/{k_n}}(\widehat\G(T-t,T-s)({\boldsymbol\Psi}^{(k_n)}(\uu_{k_n})))(s,x)ds,\qquad\;\,t\in [0,T-h_T^{-1}]\times B_{h_T}.
\end{align*}
Arguing as we did to estimate the term $\bm{\mathcal C}^{(\ell,p)}_{i,\gamma}$, we conclude that ${\mathcal E}_n$ and ${J_x\mathcal E}_n$ vanish uniformly in $[0,T-h_T^{-1}]\times B_{h_T}$, as $n\to +\infty$.
By the arbitrariness of $h$, we conclude that there exists a function $\uu$ such that $\uu_{k_n}$ and $J_x\uu_{k_n}$ converges locally uniformly in $[0,T)\times\Rd$ to $\uu$ and $J_x\uu$, respectively,
as $n\to +\infty$.

{\em Step 3.} Here, we conclude the proof, by showing that $\uu$ is a mild solution of \eqref{eq:davide-2} and
$\uu\in\bm{\mathcal K}_{T}$. Since the sequence $(\uu_n)$ is bounded in $\bm{\mathcal K}_T$, it follows immediately that $\uu$ belongs to $\bm{\mathcal K}_T$
as well. On the other hand, since
\begin{equation}
\uu_{k_n}(t,x)
= (\widehat\G(T-t,0)\g_{k_n})(x)-\int_{t}^T(\widehat\G(T-t,T-s)({\boldsymbol\Psi}^{(k_n)}(\uu_{k_n})))(s,x)ds
\label{crozza-1}
\end{equation}
and $J_x\uu_{k_n}(s,\cdot)$ converges to $J_x\uu(s,\cdot)$
locally uniformly in $[0,T)\times\Rd$, using Proposition \ref{prop:convergence_properties} and the properties of ${\boldsymbol\psi}^{(k_n)}$,
we deduce that $\widehat\G(T-t,T-s)(({\boldsymbol\Psi}^{k_n}(\uu_{k_n}))(s,\cdot))$ converges pointwise in $[0,T)\times\Rd$ to $\widehat\G(T-t,T-s)(({\boldsymbol\Psi}(\uu))(s,\cdot))$.
Moreover, due again to the estimate $|{\boldsymbol\psi}^{(n)}(x,z)|\le c(1+|z|)$, we can let $n$ tend to $+\infty$ in \eqref{crozza-1} and conclude that $\uu$ satisfies \eqref{eq:davide-2} in $[0,T)\times\Rd$.
Finally, we extend $\uu$ by continuity  in $T$ setting $\uu(T,\cdot)=\g$.
This completes the proof.
\end{proof}

\subsubsection{Systems of markovian FBSDE's}

Here, we study a system of forward-backward stochastic
differential equations and show that its solution can be written
in terms of the mild solution to a semilinear Cauchy problem of the type considered in the previous subsection.

Let $(\Omega,{\mathcal E},\mathbb P)$ be a complete probability space and let $(W_t)$ be a $d$-dimensional
Wiener process. By $(\F_t^W)$ we denote its natural filtration augmented with the negligible sets in ${\mathcal E}$.

For any $p\in[1,+\infty)$, we denote by ${\mathbb H}^p$ and ${\mathbb K}$, respectively, the space of progressively measurable
with respect to $\F_t^W$-random processes $(X_t)$
such that $\|X\|_{{\mathbb H}^p}:= \E\sup_{t\in[0,T]}|{X_t}|^p<+\infty$,
and the space of all the $\F_t^W$-progressively measurable processes
$({\bf Y},{\bf Z})$
such that
\begin{eqnarray*}
\|{({\bf Y},{\bf Z})}\|^2_{{\it{cont}}}:=\E\sup_{t\in[0,T]}|{{\bf Y}_t}|^2+\E\int_0^T|{
{\bf Z}_\sigma}|^2d\sigma<+\infty.
\end{eqnarray*}

We consider the system \eqref{eq:FBSDE}, with $H_j(t,x,z):=\sum_{i=1}^d\sum_{k=1}^m(\tilde B_i(t,x)(G(t,x))^{-1})_{jk}z_{i,k}+
\psi_j(x,z)$ for any $j=1,\ldots,m$, under the following assumptions.
\begin{hyp}
\begin{enumerate}[\rm(i)]
\item
$G_{ij}=G_{ji}\in C^{\alpha/2,\alpha}_{\rm loc}([0,T]\times\Rd)\cap C^{1,2+\alpha}_{\rm loc}([0,T]\times\R^d)$ for some $\alpha\in (0,1)$ and any $i,j=1,\ldots,d$; moreover,
the function $\lambda_G$ is bounded from below by a positive constant;
\item
the entries of the vector-valued function ${\bf b}$ and of the matrices-valued functions $\tilde B_i$ $(i=1,\ldots,d)$ belong to $C^{\alpha/2,\alpha}_{\rm loc}([0,T]\times\Rd)\cap C^{0,1+\alpha}_{\rm loc}([0,T]\times\R^d)$; moreover $\g\in C_b(\Rd;\R^m)$;
\item
${\bf b}$ and the matrices $Q=\frac{1}{2}G^2,\tilde B_i$ $(i=1,\ldots,d)$ satisfy the growth conditions in Hypotheses $\ref{Hyp-stime-gradiente}(iii)$, with $M=G$ and $J=[0,T]$;
\item
${\bf b}$ and $G$ grow at most linearly as $|x|\to +\infty$, uniformly with respect to $t\in [0,T]$, and
$\boldsymbol{\psi}:\Rd\times\R^{md}\to\R^m$ satisfies the inequalities in Hypothesis $\ref{hyp:dickens}$ with $\beta=\alpha$.
\end{enumerate}
\label{hyp:devilheart}
\end{hyp}

We denote by $\uu$ the mild solution to \eqref{eq:davide-2} with $C\equiv 0$, provided by Proposition \ref{prop:davide-2}, and state the main result of this subsection.

\begin{thm}
For any $0\leq t\leq\tau\leq T$ and $x\in\Rd$, set
${\bf Y}(\tau,t,x):=\uu(\tau,X(\tau,t,x))$ and
${\bf Z}(\tau,t,x):=G(\tau,X_\tau(\tau,t,x))(J_x\uu(\tau,X(\tau,t,x)))^T$. Then, $(\Y,\Z)\in\mathbb K$
and $(X,\Y,\Z)$ is an adapted solution to \eqref{eq:FBSDE}.
\label{thm:bauman}
\end{thm}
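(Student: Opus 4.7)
The plan is to realize $({\bf Y},{\bf Z})$ as the limit of the Markovian representations of the approximating BSDE's built from the mollified sequence $(\uu_{k_n})$ of Proposition \ref{prop:davide-2}. Under Hypotheses \ref{hyp:devilheart}, the coefficients ${\bf b}$ and $G$ are locally Lipschitz and grow at most linearly at infinity, so the forward SDE admits a unique strong $({\mathcal F}_\tau^W)$-adapted solution $X(\cdot,t,x)$ with continuous paths and all moments finite; this handles the forward component and guarantees that ${\bf Y}$ and ${\bf Z}$ as defined in the statement are $({\mathcal F}_\tau^W)$-progressively measurable.

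For the nonlinear identification, fix $n\in\N$. The nonlinearity $\boldsymbol\psi^{(k_n)}$ is globally Lipschitz and bounded and $\g_{k_n}\in C_b(\Rd;\R^m)$, so interior parabolic Schauder estimates applied to $D_\tau\uu_{k_n}+\bm{\mathcal A}\uu_{k_n}=\boldsymbol\Psi^{(k_n)}(\uu_{k_n})$ provide $\uu_{k_n}\in C^{1+\alpha/2,2+\alpha}_{\rm loc}([0,T)\times\Rd;\R^m)$, enough regularity to apply It\^o's formula componentwise. Applied to $\uu_{k_n}(\tau,X_\tau)$ on $[t,T-\varepsilon]$ and combined with the semilinear PDE (with $C\equiv 0$) and the decomposition $\bm{\mathcal A}=\tilde{\bm{\mathcal A}}+\sum_i\tilde B_iD_i$, one obtains
\begin{align*}
-d\uu_{k_n}(\tau,X_\tau)={\bf H}^{(n)}(\tau,X_\tau,{\bf Z}_\tau^n)d\tau-{\bf Z}_\tau^n dW_\tau,
\end{align*}
where ${\bf Y}_\tau^n:=\uu_{k_n}(\tau,X_\tau)$, ${\bf Z}_\tau^n:=G(\tau,X_\tau)(J_x\uu_{k_n}(\tau,X_\tau))^T$ and ${\bf H}^{(n)}$ denotes ${\bf H}$ with $\boldsymbol\psi$ replaced by $\boldsymbol\psi^{(k_n)}$; the $\tilde B_i$-part of $\bm{\mathcal A}$ is absorbed into ${\bf H}^{(n)}$ upon rewriting $\tilde B_i(J_x\uu_{k_n})^T=\tilde B_iG^{-1}{\bf Z}^n$. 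Letting $\varepsilon\to 0^+$ and using continuity of $\uu_{k_n}$ up to $T$ extends the identity to $[t,T]$ with ${\bf Y}_T^n=\g_{k_n}(X_T)$.

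To pass to the limit, we establish a uniform bound in ${\mathbb K}$. The estimate $\|\uu_{k_n}\|_\infty\le K$ from Proposition \ref{prop:davide-1} gives $|{\bf Y}^n|\le K$; applying It\^o's formula to $|{\bf Y}^n|^2$, taking expectations and exploiting the uniform linear growth $|{\bf H}^{(n)}(\tau,x,z)|\le C(1+|z|)$ (inherited from the growth of $\boldsymbol\psi$ and the boundedness of $\tilde B_iG^{-1}$ via Hypothesis \ref{hyp:devilheart}(iii)) together with Young's inequality yields $\E\int_0^T|{\bf Z}_\sigma^n|^2\,d\sigma\le C'$ uniformly in $n$. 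By Proposition \ref{prop:davide-2}, $\uu_{k_n}\to\uu$ and $J_x\uu_{k_n}\to J_x\uu$ locally uniformly on $[0,T]\times\Rd$ and $[0,T)\times\Rd$ respectively, hence ${\bf Y}_\tau^n\to{\bf Y}_\tau$ a.s. for every $\tau\in[0,T]$ and ${\bf Z}_\tau^n\to{\bf Z}_\tau$ a.s. for every $\tau\in[0,T)$; combining this a.s. convergence with the uniform ${\mathbb K}$-bound gives $({\bf Y},{\bf Z})\in{\mathbb K}$, and, together with the convergence of the drift (see below) and of the stochastic integral (via the It\^o isometry), produces the BSDE \eqref{eq:FBSDE}.

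The main obstacle is the passage to the limit in the nonlinear drift near $\tau=T$. The weighted gradient estimate of Theorem \ref{prop:system_gradient_estimates} only yields the pointwise bound $|{\bf Z}_r^n|\le c(T-r)^{-1/2}$, which is not $L^2(0,T)$-integrable, so the $L^2$ membership of ${\bf Z}$ cannot be obtained from a naive pointwise argument and must be extracted from the uniform a priori BSDE estimate above. Accordingly, convergence of $\int_\tau^T{\bf H}^{(n)}(r,X_r,{\bf Z}_r^n)\,dr$ is handled by splitting at $T-\varepsilon$: on $[t,T-\varepsilon]$ dominated convergence applies thanks to the uniform pointwise bound $(T-r)^{-1/2}$ and the a.s. local uniform convergence of $J_x\uu_{k_n}$, while on $[T-\varepsilon,T]$ the integral is bounded in $L^1(\Omega)$ by $c(\sqrt{\varepsilon}+\varepsilon)$ uniformly in $n$, via Cauchy--Schwarz and the uniform ${\mathbb K}$-estimate; one then concludes by letting $\varepsilon\to 0^+$.
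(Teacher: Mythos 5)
Your overall architecture matches the paper's: mollify $\g$ and $\boldsymbol\psi$, write the Markovian BSDE for each $\uu_{k_n}$, pass to the limit using the local uniform convergence of $\uu_{k_n}$ and $J_x\uu_{k_n}$ from Proposition \ref{prop:davide-2}, and use the pointwise bound $|{\bf Z}^n_\sigma|\le K(T-\sigma)^{-1/2}$ as an $L^1$ dominant for the drift integral. That much is faithful to the paper (which, instead of re-deriving the approximate BSDE via It\^o, simply cites \cite{fuhrman-tessitore}; your derivation is a harmless variant).

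The gap is in the stochastic-integral term, which is precisely the step you flag as delicate. You correctly observe that the pointwise bound is not $L^2$-integrable in $\sigma$, derive a uniform a priori estimate $\sup_n\E\int_0^T|{\bf Z}^n_\sigma|^2\,d\sigma\le C'$ via It\^o on $|{\bf Y}^n|^2$, and then assert that almost sure convergence ${\bf Z}^n\to{\bf Z}$ together with this uniform $L^2$ bound lets you pass to the limit in $\int_\tau^T{\bf Z}^n_\sigma\,dW_\sigma$ ``via the It\^o isometry.'' That step does not close: It\^o's isometry requires $L^2((0,T)\times\Omega)$ convergence of ${\bf Z}^n\to{\bf Z}$, and a.s. convergence plus a uniform $L^2$ bound yields only $\liminf$-type information (Fatou gives ${\bf Z}\in L^2$, nothing more). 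To get $L^2$ convergence one would need $L^2$-equi-integrability of $(|{\bf Z}^n|^2)$, which cannot be extracted from the pointwise $(T-\sigma)^{-1/2}$ bound near $\tau=T$ (its square is not integrable there), nor from the uniform $L^2$ bound. The splitting at $T-\varepsilon$ you use for the drift also fails for the stochastic integral, since the tail piece over $[T-\varepsilon,T]$ is only controlled by $2\sqrt{C'}$ in $L^2$, not by something vanishing with $\varepsilon$.

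The paper fills exactly this gap by proving directly that $({\bf Z}^{k_n})$ is Cauchy in $L^2((0,T)\times\Omega)$: It\^o's formula applied to $|{\bf Y}^{k_n}_\tau-{\bf Y}^{k_m}_\tau|^2$, followed by taking expectations (legitimate since $({\bf Y}^{k_n},{\bf Z}^{k_n})\in\mathbb K$), yields
\begin{align*}
\E|{\bf Y}^{k_n}_\tau-{\bf Y}^{k_m}_\tau|^2+\E\int_\tau^T|{\bf Z}^{k_n}_\sigma-{\bf Z}^{k_m}_\sigma|^2\,d\sigma
=\E|\g_{k_n}(X_T)-\g_{k_m}(X_T)|^2+2\E\int_\tau^T\langle {\bf Y}^{k_n}_\sigma-{\bf Y}^{k_m}_\sigma,\overline\H^{n,m}_\sigma\rangle\,d\sigma,
\end{align*}
and the right-hand side is estimated using $\|{\bf Y}^{k_n}\|_\infty\le K$, the $\alpha$-H\"older modulus of $\boldsymbol\psi$ (so that $|\overline\H^{n,m}_\sigma|$ involves only $|{\bf Z}^{k_n}_\sigma-{\bf Z}^{k_m}_\sigma|^\alpha$, with $\alpha<1$), the pointwise bound, and dominated convergence. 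This gives Cauchy-ness in $L^2$ directly and hence identifies the $L^2$ limit with ${\bf Z}$, so that $\int{\bf Z}^{k_n}dW\to\int{\bf Z}\,dW$. You should replace your ``a.s. + uniform bound + isometry'' step with a difference estimate of this kind; the rest of your outline would then go through.
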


\begin{proof}
Throughout the proof, $(\uu_{k_n})$ is the subsequence of $(\uu_n)$ provided by Proposition \ref{prop:davide-2}, i.e., $\uu_{k_n}$ is the unique mild solution to \eqref{eq:davide-2}, with $C\equiv0$ and $\g$ and $\boldsymbol\Psi$ replaced by $\g_{k_n}$ and $\boldsymbol\Psi^{(k_n)}$, respectively.

Under Hypotheses \ref{hyp:devilheart}(ii), (iv), there exists a unique adapted process $(X_t)$ with continuous trajectories such that
$X_{\tau}=x+\int_t^{\tau}{\bf b}(\sigma,X_{\sigma})d\sigma+\int_t^{\tau}G(\sigma,X_{\sigma})dW_t$ (see \cite[Thm. 3.4.6]{kunita}).
Now we fix $t\in[0,T]$, $x\in\R^d$, set $X_\tau:=X(\tau,t,x)$ and, for any $\tau\in[t,T]$, define
${\bf Y}_{\tau}=\uu(\tau,X_\tau)$, ${\bf Y}^{k_n}_{\tau}=\uu_{k_n}(\tau,X_\tau)$,
${\bf Z}_{\tau}=G(\tau,X_\tau)(J_x\uu(\tau,X_\tau))^T$
and ${\bf Z}^{k_n}_\tau=G(\tau,X_\tau)(J_x\uu_{k_n}(\tau,X_\tau))^T$.
By \cite{fuhrman-tessitore}, $({\bf Y}^{k_n},{\bf Z}^{k_n})$ satisfies the equation (recall that $\g_{k_n}$ and $\bm\psi^{(k_n)}$ are regular approximations)
\begin{equation}
{\bf Y}^{k_n}_\tau+\int_\tau^T {\bf Z}^{k_n}_\sigma dW_\sigma =
\g_{k_n}(X_T)+\int_\tau^T\H_{k_n}(\sigma,X_\sigma,{\bf Z}^{k_n}_\sigma)d\sigma,\qquad\;\,{\mathbb P}\textrm{-}a.s.,\;\,\tau\in [t,T].
\label{eq:davide-11}
\end{equation}
Our aim consists in showing that we can let $n$ tends to $+\infty$ in both the sides of \eqref{eq:davide-11}
obtaining
\begin{equation}
{\bf Y}_\tau+\int_\tau^T {\bf Z}_\sigma dW_\sigma =
\g(X_T)+\int_\tau^T\H(\sigma,X_\sigma,{\bf Z}_\sigma)d\sigma\qquad\;\,{\mathbb P}\textrm{-}a.s.,\;\,\tau\in [t,T].
\label{eq:davide-12}
\end{equation}
Clearly, ${\bf g}_{k_n}$ converges to ${\bf g}$, locally uniformly in $\Rd$. Moreover, since
$\uu_{k_n}$ and $J_x\uu_{k_n}$ converge locally uniformly in $[0,T)\times\Rd$ to $\uu$ and $J_x\uu$, respectively,
${\bf Y}^{k_n}_{\tau}$ and $\Z^{k_n}_\tau$ converge almost surely in $\Omega$ to ${\bf Y}_{\tau}$ and $\Z_{\tau}$, respectively, as $n\to +\infty$.
As far as the integral term in the right-hand side of \eqref{eq:davide-11} is concerned, we begin by observing that
\begin{align}
|{\boldsymbol\psi}_{k_n}(x,z_1)-{\boldsymbol\psi}(x,z_2)|\le c(1+|z_1|+|z_2|)\big(k_n^{-\alpha}+|{z_1-z_2}|^\alpha\big),
\label{eq:davide-10}
\end{align}
for any $x\in\Rd$, $z_1\in B_n$ and $z_2\in\R^{md}$.
Moreover, Hypothesis \ref{hyp:devilheart}(iv) implies that ${\bf H}$ satisfies the condition in Hypothesis \ref{hyp:devilheart}(iv) with $x_1=x_2=x$.
Since $|{\bf Z}^{k_n}_{\sigma}|\vee |{\bf Z}_{\sigma}|\leq K(T-\sigma)^{-1/2}$ (where $K$ is the constant in Proposition \ref{prop:davide-1}), taking $n$ sufficiently large, from
\eqref{eq:davide-10} we conclude that
\begin{align*}
|{\H_{k_n}(\sigma,X_\sigma,{\bf Z}^{k_n}_\sigma)-\H(\sigma,X_\sigma,{\bf Z}_\sigma)}|
\le &c(1+|{\bf Z}^{k_n}_\sigma|+|{\bf Z}_\sigma|)\big(k_n^{-\alpha}+|G(\sigma,X_\sigma)[(J_x(\uu_{k_n}-\uu))(\sigma,X_{\sigma})]^T|^\alpha\big)
\end{align*}
$\mathbb P$-almost surely, for almost every $\sigma\in (\tau,T)$ and that the last side of the previous inequality vanishes
$d\sigma\otimes\mathbb P$-almost surely, as $n\to +\infty$.
Moreover, $|{\boldsymbol\psi}_{k_n}(x,z)|\le c(1+|z|)$ for any $x\in\Rd$, any $z\in\R^{md}$ and some positive constant $c$, independent of $n$, and this shows that
$|{\bf H}_{k_n}(\sigma,X_\sigma,{\bf Z}^{k_n}_\sigma)|\leq c(1+K(T-\sigma)^{-1/2})$.
By dominated convergence, we now easily conclude that the integral term in the right-hand side of \eqref{eq:davide-11} converges $\mathbb P$-almost surely
to $\int_\tau^T\H(\sigma,X_\sigma,{\bf Z}_\sigma)d\sigma$.

It remains to prove the convergence of $\int_\tau^T {\bf Z}^{k_n}_\sigma dW_\sigma$ to
$\int_\tau^T {\bf Z}_\sigma dW_\sigma$. First, we prove that $\int_\tau^T
{\bf Z}_\sigma dW_\sigma$ makes sense, since this is not guaranteed by the previous
estimates, which show only that the growth of ${\bf Z}_\sigma$ can be estimated by
$(T-\sigma)^{-1/2}$, which is not square integrable in $(\tau,T)$.
For this purpose, we show that $({\bf Z}^{k_n}_{\tau})$ is a Cauchy sequence $L^2((0,T)\times\Omega)$.
This is enough to conclude that ${\bf Z}_{\tau}$ is a square integrable process since
${\bf Z}^{k_n}_\tau$ pointwise tends to ${\bf Z}_\tau$.
Let us set $\overline {\bf Y}^{n,m}_\sigma:={\bf Y}^{k_n}_\sigma-{\bf Y}^{k_m}_\sigma$,
$\overline {\bf Z}^{n,m}_\sigma:={\bf Z}^{k_n}_\sigma-{\bf Z}^{k_m}_\sigma$, $\overline\g^{n,m}_T:= \g_{k_n}(X_T)-\g_{k_m}(X_T)$, $\overline\H^{n,m}_\sigma :=
\H_{k_n}(\sigma,X_\sigma,{\bf Z}^{k_n}_\sigma)-\H_{k_m}(\sigma,X_\sigma,{\bf Z}^{k_m}_\sigma)$
for any $n,m\in\N$ and $\sigma\in[0,T]$. Integrating the It\^o formula
$d|{\overline {\bf Y}^{n,m}_\tau}|^2=2\langle\overline
{\bf Y}^{n,k}_\tau,\overline \H^{n,m}_\tau\rangle d\tau
-2\langle\overline {\bf Y}^{n,m}_\tau,\overline {\bf Z}^{n,m}_\tau\rangle dW_\tau
+|{\overline{\bf Z}^{n,m}_\tau}|^2d\tau$,
we obtain
\begin{align}
|{\overline {\bf Y}^{n,m}_\tau}|^2+\int_\tau^T|{\overline {\bf Z}^{n,m}_\sigma}|^2
d\sigma=|{\overline\g^{n,m}_T}|^2
+2\int_\tau^T\langle\overline {\bf Y}^{n,m}_\sigma,\overline\H^{n,m}_\sigma\rangle
d\sigma-2\int_\tau^T \langle\overline{\bf Y}^{n,m}_\sigma, \overline
{\bf Z}^{n,m}_\sigma \rangle dW_\sigma.
\label{ito}
\end{align}
Since  $({\bf Y}^{k_n},{\bf Z}^{k_n}), ({\bf Y}^{k_m},{\bf Z}^{k_m})\in {\mathbb K}$,
the processes $\{\int_0^{\tau}\langle\overline{\bf Y}^{n,m}_\sigma, \overline
{\bf Z}^{n,m}_\sigma \rangle dW_\sigma: \tau\in [0,T]\}$ are martingales.
Hence, from \eqref{ito} it follows that
\begin{eqnarray*}
\E|{\overline {\bf Y}^{n,m}_\tau}|^2+\E\int_\tau^T|{\overline {\bf Z}^{n,m}_\sigma}|^2
d\sigma=\E|{\overline\g^{n,m}_T}|^2
+2\E\int_\tau^T\langle\overline {\bf Y}^{n,m}_\sigma,\overline\H^{n,m}_\sigma\rangle
d\sigma.
\end{eqnarray*}
Note that
\begin{align*}
\bigg |\E\int_\tau^T\langle\overline {\bf Y}^{n,m}_\sigma,\overline\H^{n,m}_\sigma\rangle
d\sigma\bigg |\le &\E\bigg (\sup_{\tau\in[0,T]}|{\overline {\bf Y}^{n,m}_\tau}|
\int_\tau^T|{\overline \H^{n,m}_\sigma}| d\sigma \bigg )\le K\E\int_\tau^T|{\overline \H^{n,m}_\sigma}|d\sigma,
\end{align*}
and, since
$|{\boldsymbol\psi}_{k_n}(x,z_1)-{\boldsymbol\psi}_{k_m}(x,z_2)|
\le c(1+|z_1|+|z_2|)\big(k_n^{-\alpha}+k_m^{-\alpha}+|{z_1-z_2}|^\alpha\big)$
for any $x\in\Rd$, $z_1\in B_n$ and $z_2\in\R^{md}$, we can estimate
$|{\overline \H^{n,m}_\sigma}|\leq
(1+|{\bf Z}^{k_n}_\sigma|+|{\bf Z}^{k_m}_\sigma|)
\big(k_n^{-\alpha}+k_m^{-\alpha}+|{\bf Z}^{k_n}_\sigma-{\bf Z}^{k_m}_\sigma|^\alpha\big)$
for any $k_n,k_m>K(T-s)^{-1/2}$.
From the definitions of ${\bf Z}^{k_n}$, $\overline {\bf Z}^{n,m}$ and the estimate
$|{\Z}^{k_n}(t,x)|,|{\Z}(t,x)|\leq c(T-t)^{-1/2}$, for any $x\in\Rd$, $t\in(0,T)$ and some positive constant $c$, which follows
from Remark \ref{rem:tolkien}(i), by dominated convergence we conclude that, for any
$\varepsilon>0$, there exists $\bar n\in\N$ such that
$\E\int_0^T|{\overline {\bf Z}^{n,m}_\sigma}|d\sigma\leq \varepsilon$, for any $n,m\geq \bar n$.
Similarly, we can show that, for any $\varepsilon>0$, there exists $\bar
n\in\N$ such that $\E|{\overline\g^{n,m}_T}|^2\leq \varepsilon$, for any
$n,k\geq \bar n$.
Hence, $({\bf Z}^{k_n})$ is a Cauchy sequence in $L^2((0,T)\times\Omega)$, and this
implies that $\int_\tau^T
{\bf Z}_\sigma dW_\sigma$ makes sense and that ${\bf Z}^{k_n}$ converges to ${\bf Z}$ in
$L^2((0,T)\times\Omega)$. Thus,
$\E|{\int_\tau^T({\bf Z}^{k_n}_\sigma-{\bf Z}_\sigma)
dW_\sigma}|^2$ tends to $0$ as $n\to +\infty$.
We thus conclude that $\int_\tau^T{\bf Z}^{k_n}_\sigma dW_\sigma$ tends to
$\int_\tau^T{\bf Z}_\sigma dW_\sigma$ in $\Pp$-almost surely. Hence, we can let $n\to +\infty$
in $\eqref{eq:davide-11}$ and deduce that $({\bf Y},{\bf Z})$ is a solution to
\eqref{eq:davide-12}. Clearly, we have also proved that $({\bf Y},{\bf Z})\in{\mathbb K}$.
\end{proof}

\begin{coro}
For any $t\in[0,T]$, the law of the process $\{{\bf Y}_\tau\}_{\tau\in[t,T]}$,
obtained as the limit
of the sequence
$\{{\bf Y}^{k_n}_\tau\}_{\tau\in[t,T]}$, is uniquely determined, i.e.,
if $(\Omega,\F,\{\F_t\},\Pp)$
and $(\tilde \Omega,\tilde\F,\{\tilde\F_t\},\tilde\Pp)$ are two probability
spaces, and $\{{\bf Y}_\tau\}_{\tau\in[t,T]}$, $\{\tilde{\bf Y}_\tau\}_{\tau\in[t,T]}$
are the random processes of Theorem
$\ref{thm:bauman}$, related respectively to $(\Omega,\F,\{\F_t\},\Pp)$ and $(\tilde \Omega,\tilde\F,\{\tilde\F_t\},\tilde\Pp)$,  then $\{{\bf Y}_\tau\}_{\tau\in[t,T]}$,
$\{\tilde{\bf Y}_\tau\}_{\tau\in[t,T]}$
have the same law.
\end{coro}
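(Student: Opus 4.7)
The plan is to reduce the statement to the uniqueness in law of the forward SDE for $X$, combined with the observation that $\uu$ is a deterministic object. First, I would note that the function $\uu$ provided by Proposition \ref{prop:davide-2} is constructed by a purely analytic procedure: regularization of $\g$ and $\boldsymbol\psi$, a Banach fixed-point argument in $\bm{\mathcal K}_T$, and a compactness/diagonal extraction based on the operators ${\boldsymbol\Gamma}^{(n)}$. None of these steps involves a probability space, so $\uu$ is a fixed function on $[0,T]\times\R^d$, the same for $(\Omega,\F,\Pp)$ and $(\tilde\Omega,\tilde\F,\tilde\Pp)$. Moreover, by Proposition \ref{prop:davide-2}, $\uu\in\bm{\mathcal K}_T\subset C_b([0,T]\times\R^d;\R^m)$, so $\uu$ is continuous on the whole cylinder $[0,T]\times\R^d$.

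Next, I would show that the law of $X(\cdot,t,x)$ on $C([t,T];\R^d)$ is determined by $({\bf b},G,t,x)$ alone. By Hypothesis \ref{hyp:devilheart}(i)--(ii), the coefficients ${\bf b}$ and $G$ belong to $C^{0,1+\alpha}_{\rm loc}$, hence are locally Lipschitz in $x$ uniformly on compact time intervals, and by Hypothesis \ref{hyp:devilheart}(iv) they grow at most linearly at infinity. Under these conditions, \cite[Thm.~3.4.6]{kunita} yields pathwise uniqueness (and existence) of a strong solution. Yamada--Watanabe then gives uniqueness in law, so the distribution of the canonical map $\tau\mapsto X_\tau(\omega)$ on $C([t,T];\R^d)$ depends only on $({\bf b},G,t,x)$ and not on the probability space on which the driving Wiener process is defined.

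Finally, I would conclude by a pushforward argument. The map $\Phi:C([t,T];\R^d)\to C([t,T];\R^m)$ defined by $(\Phi w)(\tau):=\uu(\tau,w(\tau))$ is continuous (since $\uu$ is continuous and bounded), hence Borel measurable. By construction ${\bf Y}_\tau=\uu(\tau,X_\tau)=(\Phi(X_{\cdot}))_\tau$, so the law of the process $\{{\bf Y}_\tau\}_{\tau\in[t,T]}$ coincides with the pushforward $\Phi_*\mathrm{Law}(X_{\cdot})$. The same identity holds with $\tilde X$ and the same function $\Phi$, so $\mathrm{Law}(\{{\bf Y}_\tau\})=\mathrm{Law}(\{\tilde{\bf Y}_\tau\})$.

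The only genuine point of care is the verification that the SDE for $X$ enjoys uniqueness in law notwithstanding the unboundedness of the coefficients; this is handled by combining the local Lipschitz regularity of Hypotheses \ref{hyp:devilheart}(i)--(ii) with the linear growth in Hypothesis \ref{hyp:devilheart}(iv), which rules out explosion and delivers pathwise uniqueness, whence uniqueness in law. Everything else is a routine measurable-selection / pushforward computation using the determinism and continuity of $\uu$.
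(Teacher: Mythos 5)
Your proof is correct, but it takes a genuinely different route from the paper's. The paper's argument is a two-step limiting argument: it appeals to (a) uniqueness in law of the solutions $({\bf Y}^{k_n})$ of the approximated BSDEs \eqref{eq:davide-11} — these have globally Lipschitz drivers ${\bf H}_{k_n}$, so classical BSDE theory applies — and (b) the $\Pp$-a.s.\ convergence ${\bf Y}^{k_n}_\tau\to{\bf Y}_\tau$ established in the proof of Theorem \ref{thm:bauman}, which passes the uniqueness of laws to the limit. You instead bypass the approximating sequence entirely and exploit the closed-form identification ${\bf Y}_\tau=\uu(\tau,X_\tau)$ from Theorem \ref{thm:bauman}: since $\uu$ is a single deterministic function obtained by a purely analytic construction (and $\uu\in\bm{\mathcal K}_T\subset C_b([0,T]\times\Rd;\R^m)$), the law of ${\bf Y}$ is the pushforward of the law of $X$ under a fixed continuous map on path space, and uniqueness in law for $X$ (pathwise uniqueness via local Lipschitz regularity plus linear growth, hence uniqueness in law) concludes. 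Your approach is more direct and avoids any appeal to uniqueness theory for BSDEs; it also makes the dependence of the conclusion on the forward-SDE data $({\bf b},G,t,x)$ explicit. The paper's approach, by contrast, stays closer to the stated formulation of the corollary (``obtained as the limit of $({\bf Y}^{k_n})$'') and is the standard argument in the BSDE literature. The one thing to keep in mind with your argument is that it relies on the representation ${\bf Y}_\tau=\uu(\tau,X_\tau)$ holding identically — not just a.s.\ at each fixed $\tau$ — which it does here since both sides are continuous in $\tau$ and $\uu$ is continuous; this is the point that lets you regard $w\mapsto(\tau\mapsto\uu(\tau,w(\tau)))$ as a well-defined continuous map $C([t,T];\Rd)\to C([t,T];\R^m)$.
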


\begin{proof}
The result is a straightforward consequence of the uniqueness in
law of the solutions $({\bf Y}^{k_n})$ of the approximated problems
\eqref{eq:davide-11}, and
the $\Pp$-a.s. convergence of $({\bf Y}^{k_n})$ to ${\bf Y}$.
\end{proof}

\subsection{Nash Equilibrium for a non-zero stochastic differential game}

In this last subsection, we adapt our results to obtain the existence of
a Nash equilibrium to a non-zero sum stochastic differential game.
Let $(\Omega,{\mathcal E},\mathbb P)$ be a complete probability space and let $(W_t)$ be a $d$-dimensional
Wiener process. By $(\F_t^W)$ we denote its natural filtration augmented with the negligible sets in ${\mathcal E}$. Let $T>0$, $t\in [0,T)$ and $x\in\Rd$.
As in the previous subsection, by $(X_{\tau}=X_{\tau}^{t,x})$ we denote the unique adapted and continuous solution of the stochastic equation
\begin{eqnarray*}
X_{\tau}^{t,x}=x+\int_t^T{\bf b}(\sigma,X_{\sigma}^{t,x})d\sigma+\int_t^TG(\sigma,X_{\sigma}^{t,x})dW_{\sigma},\qquad\;\,{\mathbb P}{\rm -}a.s.,\;\,\tau\in [t,T].
\end{eqnarray*}
We notice that, exactly as in the previous subsection, the assumptions on ${\bf b}$ and $G$, formulated below (see Hypotheses \ref{hyp:davide-nz}), imply existence
and uniqueness of the solution to the above equation.

We suppose that $m$ players intervene on a system and,
for any player $i=1,\ldots,d$, we introduce the space of admissible controls
$U^i:=\{u:[0,T]\times\Omega\to V^i:u {\textrm{ is a predictable
process}}\}$,
where $V^i\subset\R$ are prescribed sets. Moreover, we introduce the
space of admissible strategies $\bm{\mathcal U}:=\otimes_{i=1}^mU^i$. Given $\uu\in \bm{\mathcal U}$, we define
\begin{equation}
W_{\tau}^{(\uu)}:=W_{\tau}-\int_t^{\tau}{\bf r}(\sigma,X_{\sigma}^{t,x},\uu_{\sigma})d\sigma,\qquad\;\,{\mathbb P}^{(\uu)}:=\rho^{(\uu)}\mathbb P,
\label{eq:silmarilion}
\end{equation}
where $\rho^{(\uu)}=\exp\left (\int_t^{\tau}{\bf r}(\sigma,X_{\sigma}^{t,x},\uu_{\sigma})d\sigma-\frac{1}{2}\int_t^{\tau}({\bf r}(\sigma,X_{\sigma}^{t,x},\uu_{\sigma}))^2d\sigma\right )$. Note that
$W^{(\uu)}$ is a $d$-dimensional Wiener process with respect to $\mathbb P^{(\uu)}$ and that $X$ satisfies
\begin{equation}
X^{(t,x)}_{\tau}=x+\int_t^T{\bf b}(\sigma,X_{\sigma}^{t,x})d\sigma+\int_t^TG(\sigma,X_{\sigma}^{t,x}){\bf r}(\sigma,X_{\sigma}^{t,x},\uu_{\sigma})d\sigma+\int_t^TG(\sigma,X_{\sigma}^{t,x})dW_{\sigma}^{(\uu)},
\label{eq:davide-1}
\end{equation}
${\mathbb P}$-almost surely.
For any $\uu\in \bm{\mathcal U}$, to any player $i$, a cost functional $J^i$ is associated, which depends on the
strategies of the whole players. More precisely,
\begin{equation}
J^i({\bf u})={\mathbb E}^{({\bf u})}\bigg [\int_0^Th^i(X_s,{\bf u}_s)ds+g^i(X_T)\bigg ],\qquad\;\,i=1,\ldots,m,
\label{cost-funct}
\end{equation}
where ${\mathbb E}^{(\uu)}$ is the expectation with respect to $\Pp^{(\uu)}$ and
$\h:\Rd\times \bm{\mathcal U}\to\R^m$, the running cost, and $\g:\Rd\to\R^m$, the terminal cost, are bounded
Borel measurable functions.

\begin{defi}
\label{def-6.13}
The strategy $\bm{\mathcal U}\ni\tilde{\bf u}=\left(\tilde u^1,\ldots,\tilde u^m\right)$ is a Nash equilibrium to problem \eqref{eq:davide-1}-\eqref{cost-funct},
if  $J^i(\tilde{\bf u})\leq J^i(\tilde u^1,\ldots,\tilde u^{i-1}, u^i,\tilde u^{i+1},
\ldots,\tilde u^m)$ for any $u^i\in U^i$ $(i=1,\ldots,m)$.
\end{defi}

If $\tilde{\bf u}$ is a Nash equilibrium for
$i=1,\ldots,m$, then the player $i$ has no earn changing its control $\tilde
u^i$, if the other $m-1$ players choose the strategy
$(\tilde u_1,\ldots,\tilde u_{i-1},\tilde u_{i+1},\ldots,\tilde u_m)$.
The Hamiltonian function $\tilde \H$
associated to this system is defined by $\tilde H_i(t,x,z,{\bf u}):=\langle z^i,{\bf r}(t,x,{\bf u})\rangle+h_i(x,{\bf u})$
for any $x\in\Rd$, $z\in\R^{md}$, ${\bf u}\in U$ ($U$ being the set of all the controls) and $i=1,\ldots,m$.
We assume the following assumptions on ${\bf r}$, $\h=(h_1,\ldots,h_m)$ and $\tilde\H$.
\begin{hyp}
\label{hyp:davide-nz}
\begin{enumerate}[\rm (i)]
\item
${\bf h}\in C^{\gamma}_b(\Rd\times \bm {\mathcal U};\R^m)$ for some $\gamma\in (0,1)$ and ${\bf r}$ splits into the sum ${\bf r}={\bf r}_1+{\bf r}_2$, where
${\bf r}_1\in C^{0,1+\gamma}_{\rm loc}([0,T]\times\Rd;\Rd)\cap C_b([0,T]\times\Rd;\Rd)$ and ${\bf r}_2\in C^{\gamma}_b(\Rd\times \bm{\mathcal U};\Rd)$;
\item
The functions $G$, ${\bf b}$, $\tilde B_i:=(G{\bf r}_1)_iI_m$ $(i=1,\ldots,d)$ and $Q=\frac{1}{2}G^2$ satisfy Hypotheses $\ref{hyp:devilheart}$;
\item
$\tilde\H$ satisfies the generalized minimax condition, i.e., for any $i=1,\ldots,m$ there exists a function $\tilde\uu\in C^{\beta}(\Rd\times\R^{md};\bm{\mathcal U})$ $(\beta\in (0,1))$ such that for any $t\in [0,T]$,
$x\in\Rd$, $z\in\R^{md}$ and $u^i\in U^i$ $(i=1,\ldots,m)$ it holds that
\begin{eqnarray*}
\;\;\;\;\;\;\;\;\;\;\;\tilde H^i(t,x,z,\tilde \uu(x,z))\leq \tilde H^i(t,x,z,\tilde u^1(x,z),\ldots,
\tilde u^{i-1}(x,z), u^i,\tilde u^{i+1}(x,z),\ldots,\tilde u^m(x,z)),
\end{eqnarray*}
$\Pp$-almost surely in $\Omega$.
\end{enumerate}
\end{hyp}

\begin{rmk}
\label{rmk:davide}
{\rm
\begin{enumerate}[\rm (i)]
\item
Since $\tilde B_i:=(G{\bf r_1})_iI_m$, the differential operator $\bm\A$ in \eqref{eq:davide-2} is
uncoupled. We stress that this is the classical setting: indeed, comparing our situation with the classical literature (see e.g., \cite{bens-frehse, bensoussan-reg, friedman-diffgames, friedman-stochastic, mannucci},
it is possible to see that, in view of applications to differential games,
the equations of the system of PDE's are coupled only in the semilinear term, while the linear part is the same for any component.
\item
The function ${\boldsymbol\psi}$, whose components are defined by $\boldsymbol\psi_i(x,z):=\langle z^i,{\bf r}_2(x,\tilde {\bf u}(x,z))\rangle+h_i(x,\tilde{\bf u}(x,z))$ for any $x\in\Rd$, $z\in\R^{md}$ and $i=1,\ldots,m$,
satisfies the Hypothesis \ref{hyp:devilheart}(iv) with $\alpha:=\beta\gamma$.
\item
Under Hypotheses \ref{hyp:davide-nz}, Theorem \ref{thm:bauman} holds true.
\end{enumerate}}
\end{rmk}

\begin{thm}
There exists a Nash equilibrium for
problem \eqref{eq:davide-1}-\eqref{cost-funct}.
\end{thm}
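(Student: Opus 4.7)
The plan is to follow Hamad\`ene's FBSDE approach: implement the generalized minimax feedback $\tilde{\bf u}$ of Hypothesis \ref{hyp:davide-nz}(iii) in the driver of a Markovian FBSDE that fits the framework of Theorem \ref{thm:bauman}, and read off the Nash equilibrium from the controlled dynamics under suitable Girsanov changes of measure.

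First, I would reduce the game to an FBSDE. Setting $C\equiv 0$, $\tilde B_i:=(G{\bf r}_1)_iI_m$ (as in Remark \ref{rmk:davide}(i)) and $\bm\psi_i(x,z):=\langle z^i,{\bf r}_2(x,\tilde{\bf u}(x,z))\rangle+h_i(x,\tilde{\bf u}(x,z))$, where $z^i$ denotes the $i$-th column of the $d\times m$ matrix $z$, Remark \ref{rmk:davide}(ii) guarantees that $\bm\psi$ satisfies Hypothesis \ref{hyp:dickens}, while Hypothesis \ref{hyp:davide-nz}(i)--(iii) guarantees that the standing assumptions on the coefficients of $\bm{\mathcal A}$ are met. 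Proposition \ref{prop:davide-2} then produces a mild solution $\uu\in\bm{\mathcal K}_T$ of the semilinear system \eqref{eq:davide-2}, and Theorem \ref{thm:bauman} promotes it to an adapted solution $(X,\Y,\Z)$ of the corresponding Markovian FBSDE, with $Z^i_\tau=G(\tau,X_\tau)(\nabla_x u_i(\tau,X_\tau))^T$. A direct substitution using ${\bf r}={\bf r}_1+{\bf r}_2$ identifies the driver as $H_i(\tau,X_\tau,\Z_\tau)=\tilde H_i(\tau,X_\tau,\Z_\tau,\tilde{\bf u}(X_\tau,\Z_\tau))$.

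Next, I would verify that the feedback $\tilde{\bf u}^*_\tau:=\tilde{\bf u}(X_\tau,\Z_\tau)$, which is progressively measurable by the $\beta$-H\"older continuity of $\tilde{\bf u}$ and the adaptedness of $(X,\Z)$, is a Nash equilibrium. Since ${\bf r}={\bf r}_1+{\bf r}_2$ is bounded by Hypothesis \ref{hyp:davide-nz}(i), the exponential density $\rho^{(\tilde{\bf u}^*)}$ in \eqref{eq:silmarilion} is a true $\Pp$-martingale by Novikov's criterion and defines the equivalent probability $\Pp^{(\tilde{\bf u}^*)}$ under which $W^{(\tilde{\bf u}^*)}$ is a $d$-dimensional Brownian motion. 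Rewriting the $i$-th equation of the FBSDE under $\Pp^{(\tilde{\bf u}^*)}$, the Girsanov correction exactly cancels the $\langle Z^i,{\bf r}\rangle$-part of $\tilde H_i$, giving
\begin{equation*}
-dY^i_\tau=h_i(X_\tau,\tilde{\bf u}^*_\tau)\,d\tau-Z^i_\tau\,dW^{(\tilde{\bf u}^*)}_\tau,\qquad Y^i_T=g_i(X_T),
\end{equation*}
so that taking $\Pp^{(\tilde{\bf u}^*)}$-expectation at time $0$ yields $Y^i_0=J^i(\tilde{\bf u}^*)$. For any player $i$ and any alternative control $u^i\in U^i$, form ${\bf u}:=(\tilde u^{*,1},\ldots,\tilde u^{*,i-1},u^i,\tilde u^{*,i+1},\ldots,\tilde u^{*,m})$: under $\Pp^{({\bf u})}$ the same FBSDE becomes
\begin{equation*}
-dY^i_\tau=\bigl[\tilde H_i(\tau,X_\tau,\Z_\tau,\tilde{\bf u}^*_\tau)-\langle Z^i_\tau,{\bf r}(\tau,X_\tau,{\bf u}_\tau)\rangle\bigr]d\tau-Z^i_\tau\,dW^{({\bf u})}_\tau,
\end{equation*}
and the generalized minimax condition bounds the bracket from above by $h_i(X_\tau,{\bf u}_\tau)$. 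Taking $\Pp^{({\bf u})}$-expectation produces $Y^i_0\le J^i({\bf u})$, hence $J^i(\tilde{\bf u}^*)\le J^i({\bf u})$, which is the Nash condition of Definition \ref{def-6.13}.

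The only non-cosmetic obstacle is justifying that the stochastic integrals $\int_0^T Z^i_\sigma\,dW^{(\cdot)}_\sigma$ are genuine $\Pp^{(\cdot)}$-martingales, so that their expectation vanishes. Under $\Pp$ one has $(\Y,\Z)\in{\mathbb K}$, hence $\E\int_0^T|\Z_\sigma|^2d\sigma<+\infty$, and boundedness of ${\bf r}$ places $\rho^{(\cdot)}$ in $\bigcap_{p\ge1}L^p(\Pp)$; combining H\"older's inequality with the Burkholder--Davis--Gundy estimate, and, if necessary, localizing through stopping times based on the pointwise bound $|\Z_\sigma|\le C(T-\sigma)^{-1/2}$ coming from Remark \ref{rem:tolkien}(i) and the gradient estimate built into $\bm{\mathcal K}_T$, will close the argument.
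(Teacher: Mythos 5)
Your proof is correct and takes essentially the same route as the paper: reduce the game to a Markovian FBSDE whose driver is $\tilde\H$ evaluated at the generalized minimax feedback (via Remark \ref{rmk:davide}), identify $Y_0^i$ with the cost functional under $\Pp^{(\tilde{\bf u}^*)}$, and use the Girsanov change of measure together with Hypothesis \ref{hyp:davide-nz}(iii) to compare against deviating strategies. The only presentational difference is that you establish the equality $Y_0^i = J^i(\tilde{\bf u}^*)$ first and then the comparison inequality, while the paper proves $Y_{0,1}\le J^1(\widehat{\bf u}^1)$ for an arbitrary deviation and then observes equality when $\widehat{\bf u}^1=\tilde{\bf u}$; and your integrability argument leans directly on $(\Y,\Z)\in\mathbb K$ plus $L^p$-moments of the Girsanov density rather than isolating the stochastic integral from the backward equation as the paper does, but both amount to the same BDG/H\"older estimate.
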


\begin{proof}
We begin by proving that
$J^1(\tilde \uu)\leq J^1(u^1,\tilde u^2\ldots,\tilde u^m)$ for any $u^1\in U^1$, where $\tilde\uu$ is as in Hypothesis \ref{hyp:davide-nz}. For this purpose, we fix $u_t^1$ arbitrarily in $U^i$,
set $\widehat{\bf u}^{1}_t:=(u^1_t,\tilde u^2_t,\ldots,\tilde u^d_t)$ and observe that $X_{\tau}$ satisfies
\begin{align*}
X_{\tau}
& = x+\int_t^\tau {\bf b}(\sigma, X_\sigma)d\sigma
+\int_t^\tau G(\sigma, X_{\sigma}){\bf r}(\sigma,X_{\sigma},\widehat
{\bf u}^{1}_\sigma)d\sigma+\int_t^\tau G(\sigma,X_\sigma)dW^{(\widehat{\bf u}^1)}_\sigma
\end{align*}
for any $\tau\in [t,T]$,
where
$\tilde W_\tau^{(\hat\uu)}$ is as in \eqref{eq:silmarilion}, with $\uu$ being replaced by $\widehat{\uu}^1$.
We now introduce the backward system
\begin{eqnarray*}
{\bf Y}_\tau+\int_t^\tau {\bf Z}_\sigma d\tilde W_\sigma =
\g(X_T)+\int_t^\tau\tilde \H(\sigma,X_{\sigma},{\bf Z}_\sigma,\widehat
{\bf u}^{1}_\sigma) d\sigma,
\end{eqnarray*}
where $\tilde\H$ has been introduced after Definition \ref{def-6.13}.
By Theorem \ref{thm:bauman}, this system admits a solution $(X,{\bf Y},{\bf Z})$.
Writing the backward system
with respect to $W^{(\widehat {\bf u}^{1})}$, we get
\begin{equation}
{\bf Y}_\tau+\int_\tau^T {\bf Z}_\sigma dW^{(\widehat {\bf u}^{1})}_\sigma
+\int_\tau^T{\bf Z}_{\sigma}{\bf r}(\sigma,X_{\sigma},\widehat {\bf u}^{1}_\sigma)d\sigma=
\g(X_T)+\int_\tau^T\H(\sigma,X_{\sigma},{\bf Z}_\sigma)d\sigma.
\label{eq:differential_games_backward_system_tilde}
\end{equation}
Note that $\E^{(\widehat{\bf u}^{1})}\left(\int_0^T|{{\bf Z}_t}|^2dt\right)^{1/2}<+\infty$.
Indeed,
\begin{align*}
\bigg |\int_\tau^T {\bf Z}_\sigma dW^{(\widehat{\bf u}^{1})}_\sigma\bigg |
& \leq 2\sup_{\tau\in[0,T]}|{\bf Y}_\tau|+\int_\tau^T\big(|{\bf Z}_\sigma|+|\H(\sigma,X_\sigma,{\bf Z}_\sigma)|\big)d\sigma.
\end{align*}

Taking into account the Burkholder-Davis-Gundy inequalities (see \cite[Thm. 3.28]{kar-shreve}) and using the estimate
$|\H(\sigma,X_{\sigma},{\bf Z}_\sigma)|\le |\H(\sigma,X_{\sigma},0)|+|{\bf Z}_\sigma|^{1+\alpha}$, which follows from Hypothesis \ref{hyp:davide-nz}
and Remark \ref{rmk:davide},
we get
\begin{align*}
\E^{(\widehat{\bf u}^{1})}\bigg (\int_0^T|{{\bf Z}_t}|^2dt\bigg )^{\frac{1}{2}}\!
\le & c \E^{(\widehat{\bf u}^{1})}\!\sup_{\tau\in[0,T]}|{\bf Y}_\tau|+
c\E^{(\widehat {\bf u}^{1})}\!\int_\tau^T\!(|{\bf Z}_\sigma|\!+\!|\H(\sigma,X_{\sigma},0)|)d\sigma\!+\!c\E^{(\widehat {\bf u}^{1})}\int_\tau^T|{\bf Z}_\sigma|^{1+\alpha}d\sigma\\
\le & c(\tilde\E\varrho^2)^{\frac{1}{2}}\bigg (\tilde\E\sup_{\tau\in[0,T]}|{\bf Y}_\tau|
+\tilde\E\int_\tau^T\big(|{\bf Z}_\sigma|^2+|\H(\sigma,X_{\sigma},0)|^2\big)d\sigma\bigg )^{\frac{1}{2}} \\
&+c(\tilde\E\varrho^{\frac{2}{1-\alpha}})^{\frac{1-\alpha}{2}}\bigg(\tilde\E\int_\tau^T|{\bf Z}_\sigma|^2d\sigma\bigg)^{\frac{2}{1+\alpha}},
\end{align*}
and the last side of the previous chain of inequalities is finite.
Hence, taking the conditional expectation in
\eqref{eq:differential_games_backward_system_tilde} with respect to
$\Pp^{(\widehat{\bf u}^{1})}$
and $\tau=t$, we
obtain
\begin{align*}
{\bf Y}_t=\E^{(\widehat {\bf u}^{1})}[\varphi(X_T)|\F_t]+\E^{(\widehat {\bf u}^{1})}\bigg[\int_t^T
[\H(\sigma,X_{\sigma},{\bf Z}_\sigma,\widehat {\bf u}^{1}_\sigma)-{\bf Z}_\sigma
{\bf r}(\sigma,X_{\sigma},\widehat {\bf u}^{1}_\sigma)]d\sigma\bigg |\F_t\bigg].
\end{align*}

Adding and subtracting $\displaystyle\E^{(\widehat{\bf u}^{1})}
\bigg [\int_t^T \h(X_{\sigma},{\widehat
{\bf u}^{1}}_\sigma)d\sigma\bigg |\F_t\bigg ]$
and setting $t=0$, we get
\begin{align}
{\bf Y}_0& = {\bf J}({\widehat{\bf u}^{1}})+\E^{(\widehat{\bf u}^{1})}
\bigg[\int_0^T[\H(\sigma,X_{\sigma},{\bf Z}_\sigma,)-{\bf Z}_{\sigma}
{\bf r}(\sigma,X_{\sigma},{\widehat{\bf u}^{1}}_\sigma)-\h(X_{\sigma},{\widehat{\bf u}^{1}}_\sigma)]d\sigma\bigg ]\notag\\
& = {\bf J}({\widehat {\bf u}^{1}})+\E^{(\widehat {\bf u}^{1})}\bigg [
\int_0^T(\H(\sigma,X_{\sigma},{\bf Z}_\sigma)-\tilde\H(\sigma,X_{\sigma},{\bf Z}_\sigma,\widehat{\bf u}^{1}_\sigma))d\sigma\bigg ],
\label{eq:davide-14}
\end{align}
where ${\bf J}=(J^1,\ldots,J^m)$. Considering the first component of the first and last side of \eqref{eq:davide-14} and
using the minimax condition in Hypothesis \ref{hyp:davide-nz}(iii), we
conclude that $Y_{0,1}\leq J^1({\widehat{\bf u}^{1}})$.
Applying the same argument with $\widehat\uu^1$ being replaced by $\tilde\uu$, we get $Y_{0,1}=J^1(\tilde\uu)$. Moreover, the same procedure yields that
$J^k(\tilde\uu)=Y_{0,k}\le J^k(\widehat{\uu}^k)$ for any $k=1,\ldots,m$.
\end{proof}

\section{Examples}
Here, we provide some classes of operators $\bm\A$ to which the results in the previous sections apply.
\begin{example}
{\rm
\begin{enumerate}[\rm (i)]
\item
Let $Q_{ij}\equiv \delta_{ij}$, $B_i(t,x)=-x_i(1+|x|^2)^rg(t)\hat{B}_i$, $C(t,x)=-|x|^2(1+|x|^2)^ph(t)\hat{C}$
for any $(t,x)\in I\times \Rd$, $i,j=1,\ldots,d$, where $\hat B_i$ ($i=1,\ldots,d$) and $\hat C$ are constant and positive definite matrices, $g,h \in C^{\alpha/2}_{\rm loc}(I)$ have positive infimum, $g$ is bounded in $I$, and $p>2r\ge 0$. We observe that
$\mathcal{K}_{\bm\eta,0}(t,x)
\ge -(1+|x|^2)^{2r}\|g\|_{\infty}^2\sum_{i=1}^d x_i^2|\hat B_i|^2 +4|x|^2(1+|x|^2)^p h_0\lambda_{\hat C}$
for any $t\in I$, $x\in\Rd$, $\eta\in \partial B_1$, where $h_0$ denotes the positive infimum of the function $h$. Since $p>2r$, the function $\mathcal{K}_{\bm\eta,0}(t,\cdot)$ tends to $+\infty$ as $|x|\to +\infty$, uniformly with respect to $t \in I$ and $\eta \in \partial B_1$. Therefore, Hypothesis \ref{hyp:existence_uniqueness}(i) is satisfied with $\varepsilon=1$ and with a suitable choice of the constant $\kappa$. On the other hand, the function $\varphi$, defined by $\varphi(x)=1+|x|^2$, for any $x \in \Rd$, satisfies Hypothesis \ref{hyp:existence_uniqueness}(ii), with $\varepsilon=1$, for any $\lambda>0$.
\item
Let $Q_{ij}(t,x)=q(t)(1+|x|^2)^k\delta_{ij}$, $B_i(t,x)=-b(t)x_i(1+|x|^2)^rI_m+\tilde{b}(t)(1+|x|^2)^p\tilde{B}_i$ and $ C(t,x)=-c(t)(1+|x|^2)^\gamma \hat{C}$ for any $(t,x)\in I\times \Rd$ and $i,j=1,\ldots,d$. Here, $q\in C^{\alpha/2}_{\rm loc}(I)\cap C_b(I)$, the functions $b, \tilde{b}$ and $c$ belong to $ C^{\alpha/2}_{\rm loc}(I)$ and are locally bounded in $I$, $q$ and $b$ have positive infima and $\tilde{B}_i$ ($i=1,\ldots,d$), $\hat{C}$ are constant and positive definite matrices. The exponents $k,r,p,\gamma$ are nonnegative, $r >k-1$ and there exists $\sigma \in (0,1)$ such that $0\le p \le k\sigma$ and $\gamma> k(2\sigma-1)$. Clearly Hypotheses \ref{hyp_comp}(i), (ii) are satisfied. Moreover, since $2|x|^2\le |x|^2+1$ for any $|x|\ge 1$, we can estimate
$(\tilde{\A}(t)\varphi)(x)\le g(\varphi(x))$, for any $t\in I$ and $|x| \ge 1$, where $\varphi$ is as in (i) and $g:[0,+\infty)\to \R$ is defined by $g(y)= b_0y^{r+1}-K$ for any $y\ge 0$ and some positive constant $K$, $b_0$ being the infimum of $b$.
Then, Hypothesis \ref{hyp_comp}(iii) and the assumptions of Theorem \ref{compactness} are satisfied, with $W=\varphi$ and $g$ as above.
We thus conclude that $\G(t,s)$ is compact in $C_b(\Rd;\Rm)$.\end{enumerate}
}
\end{example}
We now provide a class of operators $\bm{\mathcal A}$ whose coefficients satisfy Hypotheses
\ref{hyp_comp} and \ref{Hyp-stime-gradiente}.

\begin{example}
{\rm Let
$Q(t,x)=q(t)(1+|x|^2)^kQ_0$, $b(t,x)=-b(t)(1+|x|^2)^px$, $\tilde B_i(t,x)=\tilde b(t)\tilde B_{0,i}(1+|x|^2)^r$
for any $(t,x)\in I\times\Rd$, where $Q_0$ is a positive definite matrices, $B_{0,i}$ are constant matrices, $q,b,\tilde b\in C^{\alpha/2}_{\rm loc}(I)$
and $b$, $q$ are bounded from below by a positive constant. Finally, the entries $C_{ij}=C_{ji}$ of the matrix-valued function
$C$ belong to $C^{\alpha/2,\alpha}_{\rm loc}(I\times\Rd)\cap C^{0,1+\alpha}_{\rm loc}(I\times\Rd)$, $\sup_{I\times\Rd}\Lambda_C>0$
and $|D_iC(t,x)|\le c(1+|x|^2)^{\tau}$ for any $(t,x)\in I\times\Rd$ and some $c>0$ and $\tau\ge 0$.
We take $M(x):=(1+|x|^2)^s$ for any $x\in\Rd$ and some constant $0<s\le 1/2$ and we set
$a=p$, if $s<1/2$ and $a=p-1$ if $s=1/2$. We further assume that $k\ge 2r$, $2k-2\le a$, $2s+2\tau\le a$, $2r<2s+a$ and $k+s<p+1$.
Straightforward computations show that
\begin{align*}
\lambda_{\mathcal M}(t,x)\leq
& -b(t)(1+|x|^2)^{p-1}(1+|x|^2-2s|x|^2) \\
& -2s(1+|x|^2)^{k-2}q(t)
({\rm Tr}(Q_0)(1+|x|^2)+2(s-1)\langle Q_0(x)x,x\rangle)\leq -c(1+|x|^2)^a,
\end{align*}
for any $(t,x)\in I\times\Rd$.
The functions $\psi_j$ ($j=1,\ldots,5$) satisfy the following conditions:
$\psi_1(t,x)=O(|x|^{2r})$, $\psi_2(t,x)=O(|x|^{2r-1})$, $\psi_3(t,x)=O(|x|^{2\tau})$, $\psi_4\equiv 0$,
$\psi_5(t,x)=O(|x|^{2s-1})$ and $\psi_6(t,x)=O(|x|^{2k-1})$,
as $|x|\to +\infty$, uniformly with respect to $t\in J\subset I$, where $J$ is an arbitrary bounded interval.
Hence, conditions \eqref{iesolo-1}-\eqref{iesolo-3} are easily satisfied. As it is easily seen, also Hypothesis \ref{hyp_comp}(ii), (iii)  are satisfied with $\sigma=1/2$
and $\varphi(x)=1+|x|^2$ for any $x\in\Rd$.

The conditions on the growth of the coefficients of the operator $\bm{\mathcal A}$ can be weakened if we assume that $M\equiv I$.
As it has been stressed in Remark \ref{rem-4.7}, in this case we have to assume only the first condition in \eqref{iesolo-1},
the second one in \eqref{iesolo-2} and all but the last condition in \eqref{iesolo-3}. This provides us with the following
conditions on $a$ (hence $p$), $k$, $r$ and $\tau$: $2r\le k<p+1$, $2\tau\le p$, $p>2r$.
}
\end{example}

\appendix

\section{A priori estimates for solutions to parabolic systems}
In this appendix we prove some a priori estimates for classical solutions to nonautonomous parabolic equations associated
with a system of elliptic operators as in \eqref{operat-A} whose coefficients satisfy Hypotheses \ref{base}.
To enlighten the notation, we set $\|\cdot\|_{h,R}=\|\cdot\|_{C^h_b(B_R;\R^k)}$
for any $h\in\N\cup\{0\}$ and $k\in\N$, $R>0$. Moreover, for any $\alpha,\beta\ge 0$, we denote by $\|\cdot\|_{\alpha,\beta}$ the norm of the space
$C^{\alpha,\beta}([s,T]\times\Omega)$, when $\Omega$ is a domain of $\R^m$ (possibly, $\Omega=\R^m$). Finally, for any $\alpha>0$, we denote by $\|\cdot\|_{\alpha}$
the Euclidean norm in $C^{\alpha}_b(\R^h)$ when $h\in\{1,d\}$.

We recall that, for any $0\le\alpha<\theta$ and any bounded domain $\Omega$ of class $C^{\theta}$, there exists
a positive constant $c$ such that
\begin{equation}
\|\f\|_{C^{\alpha}(\overline\Omega;\R^k)}\le c\|\f\|_{\infty}^{1-\frac{\alpha}{\theta}}\|\f\|^{\frac{\alpha}{\theta}}_{C^{\theta}(\overline\Omega;\R^k)},
\label{caroselli}
\end{equation}
for any $\f\in C^{\theta}(\overline\Omega;\R^k)$ ($k\ge 1$). (The same estimate holds true with $\Omega=\R^k$.) Moreover, if ${\mathcal T}\in {\mathcal L}(C(\overline\Omega;\R^k);C^{\beta}(\overline\Omega;\R^k))\cap {\mathcal L}(C^{\theta}_0(\overline\Omega;\R^k);C^{\beta}(\overline\Omega;\R^k))$ for some $\beta,\theta>0$, then ${\mathcal T}$ is bounded
from $C^{\alpha}_0(\overline\Omega;\R^k)$ to $C^{\beta}(\overline\Omega;\R^k)$, for any $\alpha\in (0,\theta)\setminus\N$, and
\begin{align}
\|{\mathcal T}\|_{{\mathcal L}(C^{\theta}_0(\overline\Omega;\R^k);C^{\beta}(\overline\Omega;\R^k))}\le
\|{\mathcal T}\|_{{\mathcal L}(C(\overline\Omega;\R^k);C^{\beta}(\overline\Omega;\R^k))}^{1-\frac{\alpha}{\theta}}
\|{\mathcal T}\|_{{\mathcal L}(C^{\theta}_0(\overline\Omega;\R^k);C^{\beta}(\overline\Omega;\R^k))}.
\label{caroselli-1}
\end{align}

\begin{prop}
\label{prop-A1}
Let $\Omega\subset\Rd$ be an open set, $T>s\in I$, and $\uu\in C_b([s,T]\times\overline\Omega;\R^m)\cap C^{1,2}((s,T)\times\Omega;\R^m)$
satisfy the equation $D_t\uu=\bm{\mathcal A}\uu+\g$ in $(s,T)\times\Omega$ for some $\g\in C^{\alpha/2,\alpha}((s,T)\times\Omega;\R^m)$. Further, assume that the function $t\mapsto (t-s)\|\uu(t,\cdot)\|_{C^2_b(\Omega;\R^m)}$ is bounded in $(s,T)$. Then, for any $R_1>0$ and any $x_0\in\Omega$, such that $B_{R_1}(x_0)\Subset\Omega$, there exists a positive
constant $c=c(R_1,\lambda_0,s,T)$ such that, for any $t\in (s,T)$,
\begin{align}
&(t-s)\|D^2_x\uu(t,\cdot)\|_{L^{\infty}(B_{R_1}(x_0);\R^m)}+\sqrt{t-s}\,\|J_x\uu(t,\cdot)\|_{L^{\infty}(B_{R_1}(x_0);\R^m)}\notag\\
\leq &c(\|\uu\|_{C_b([s,T]\times\overline\Omega;\R^m)}
+\|\g\|_{C^{\alpha/2,\alpha}((s,T)\times\Omega;\R^m)}).
\label{eq:gradient_interior_estimates}
\end{align}
\end{prop}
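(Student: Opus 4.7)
The plan is to derive the bound \eqref{eq:gradient_interior_estimates} by a rescaling argument that reduces matters to the classical interior Schauder estimates on a unit parabolic cylinder. First I would fix $x_0$ with $B_{R_1}(x_0)\Subset\Omega$, pick $R_2\in(R_1,\mathrm{dist}(x_0,\partial\Omega))$, and observe that on the compact cylinder $[s,T]\times\overline{B_{R_2}(x_0)}$ Hypothesis \ref{base} forces the coefficients $Q_{ij}$, $(B_i)_{hk}$ and $C_{hk}$ to have $C^{\alpha/2,\alpha}$--norms bounded by a constant $M_0=M_0(R_2,s,T)$, while $\lambda_Q\ge\lambda_0$; this lets me reason as if $\bm{\mathcal A}$ had uniformly bounded Hölder coefficients on the set where $\uu$ is being estimated.

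Next, for any $(t_0,x_1)\in(s,T]\times\overline{B_{R_1}(x_0)}$ I would set $\rho:=\min\{\tfrac12\sqrt{t_0-s},\,\tfrac12(R_2-R_1)\}$ (so that $(t_0-\rho^2,t_0]\times B_\rho(x_1)\subset(s,T]\times B_{R_2}(x_0)$) and introduce the rescaled function $\vv(\tau,y):=\uu(t_0-\rho^2(1-\tau),x_1+\rho y)$ on $(0,1]\times B_1$. Then $\vv$ solves a parabolic system on the unit cylinder with coefficients $\tilde Q_{ij}(\tau,y)=Q_{ij}(\cdot)$, $\tilde B_i=\rho B_i(\cdot)$ and $\tilde C=\rho^2C(\cdot)$ whose $C^{\alpha/2,\alpha}$--norms are controlled by $M_0(1+T)$, and with rescaled source $\tilde\g(\tau,y)=\rho^2\g(\cdot)$ satisfying $\|\tilde\g\|_{C^{\alpha/2,\alpha}((0,1)\times B_1;\R^m)}\le C_0\rho^2\|\g\|_{C^{\alpha/2,\alpha}}$.

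The structural key is that the principal part $\mathrm{Tr}(\tilde QD^2)$ is a single scalar operator acting diagonally on $\vv$, so each component $v_k$ solves a scalar parabolic equation $D_\tau v_k-\mathrm{Tr}(\tilde QD^2)v_k=h_k$, where $h_k$ collects the lower-order coupling through $\tilde B_i,\tilde C$ together with $\tilde g_k$. Applying the classical scalar interior Schauder estimate (e.g.\ \cite[Ch.~IV]{LadSolUra68Lin}) componentwise on $(0,1]\times B_1$, summing over $k$, and absorbing the $C^{\alpha/2,1+\alpha}$-norm of $\vv$ produced by the coupling terms via the standard interpolation $\|\vv\|_{C^{\alpha/2,1+\alpha}}\le\varepsilon\|\vv\|_{C^{1+\alpha/2,2+\alpha}}+C_\varepsilon\|\vv\|_\infty$ will yield
\[
\|\vv\|_{C^{1+\alpha/2,2+\alpha}((\tfrac12,1]\times B_{1/2};\R^m)}\le C\bigl(\|\vv\|_\infty+\|\tilde\g\|_{C^{\alpha/2,\alpha}}\bigr),
\]
with $C$ depending only on $M_0,\lambda_0,\alpha,d,m,T$ and thus independent of $\rho,t_0,x_1$. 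Evaluating at $(\tau,y)=(1,0)$ and unwinding the scaling produces
\[
\rho^2|D_x^2\uu(t_0,x_1)|+\rho|J_x\uu(t_0,x_1)|\le C\bigl(\|\uu\|_\infty+\|\g\|_{C^{\alpha/2,\alpha}}\bigr).
\]

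Finally, since $\rho\ge\tfrac12\min\{\sqrt{t_0-s},\,R_2-R_1\}$, dividing by $\rho^2$ and noting that $(t_0-s)\le 4\rho^2$ in the regime $\sqrt{t_0-s}\le R_2-R_1$ (while the complementary regime is trivially handled because $\rho$ is then bounded below independently of $t_0$) yields the required weighted estimate \eqref{eq:gradient_interior_estimates}. The main obstacle is the coupling of the system through the matrices $B_j$ and $C$: a componentwise application of scalar Schauder produces first derivatives of $\uu$ in the effective source, and closing the resulting bootstrap cleanly via interpolation is what genuinely needs the hypothesis that $t\mapsto(t-s)\|\uu(t,\cdot)\|_{C^2_b(\Omega;\R^m)}$ be bounded, so that the Hölder norms entering the bootstrap are a priori finite on every cylinder $(t_0-\rho^2,t_0]\times B_\rho(x_1)$.
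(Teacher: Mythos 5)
Your route is genuinely different from the paper's. The paper proceeds by the variation-of-constants formula with the Dirichlet evolution operator ${\bf G}_{n+1}^{\mathcal D}(t,s)$ on an infinite sequence of nested balls $B_{r_n}$, combines the semigroup smoothing estimates with interpolation inequalities, and closes a recursion for $\zeta_n=\sup_{\sigma}(\sigma-s)\|\uu(\sigma,\cdot)\|_{2,r_n}$ by a weighted telescoping sum; the hypothesis on $t\mapsto(t-s)\|\uu(t,\cdot)\|_{C^2_b}$ enters precisely to guarantee $\eta^{N+1}\zeta_{N+1}\to 0$. You instead propose a parabolic rescaling to a unit cylinder, apply classical interior Schauder estimates componentwise, and unwind. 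If correct, yours is shorter and more elementary (no evolution operators, no infinite iteration). The rescaling bookkeeping is correct: the Hölder norms of the rescaled coefficients gain a factor $\rho^\alpha$, $\|\tilde\g\|_{C^{\alpha/2,\alpha}}\le\rho^2\|\g\|_{C^{\alpha/2,\alpha}}$, and the final unwinding (distinguishing $\rho=\tfrac12\sqrt{t_0-s}$ from $\rho=\tfrac12(R_2-R_1)$) does give the weights $(t-s)$ and $\sqrt{t-s}$ on $D_x^2\uu$ and $J_x\uu$.

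However, there is a genuine gap in the absorption step. The scalar interior Schauder estimate delivers $\|\vv\|_{C^{1+\alpha/2,2+\alpha}((1/2,1]\times B_{1/2})}$ on the inner cylinder, but the right-hand side $\|h\|_{C^{\alpha/2,\alpha}((0,1)\times B_1)}$ must be measured on the outer one. The coupling through $\tilde B_j$ produces $\|J_y\vv\|_{C^{\alpha/2,\alpha}}$ on the outer cylinder, and the interpolation $\|\vv\|_{C^{\alpha/2,1+\alpha}}\le\varepsilon\|\vv\|_{C^{1+\alpha/2,2+\alpha}}+C_\varepsilon\|\vv\|_\infty$ cannot absorb it into a norm taken on a strictly smaller set: the two sides of the putative absorption live on different domains. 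This is the classical obstruction that motivates using either weighted (distance-to-boundary) Schauder norms, or an iteration over a geometrically shrinking family of nested cylinders, or a direct appeal to a system interior Schauder theorem that already incorporates the lower-order coupling (the principal part $\mathrm{Tr}(QD^2)I_m$ is uniformly parabolic in the Petrowsky sense, so the diagonally-coupled system in the rescaled cylinder is covered by the results of \cite[Ch.~IV]{LadSolUra68Lin}). Any of these fixes would make your argument work, but as written the closure is not justified. Note also that the a priori boundedness hypothesis on $(t-s)\|\uu(t,\cdot)\|_{C^2_b(\Omega)}$ is not what allows the interpolation to close; rather, in any of the corrected variants it serves (as in the paper) only to guarantee that the quantity you are bounding is finite to begin with, so that the implicit ``subtract and divide'' in the absorption is legitimate.
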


\begin{proof}
Throughout the proof, we denote by $c$ a positive constant, which can vary from line to line, may depend on $R_1$ and $T$, but
it is independent of $n$. Up to a translation, we can assume that $x_0=0$. We fix $R_1$ as in the statement and $R_2$ such that $B_{R_2}(x_0)\Subset\Omega$.
Then, we define $r_n:=2R_1-R_2+(R_2-R_1)\sum_{k=0}^n2^{-k}$ for any $n\in\N\cup\{0\}$. Further, we consider
a sequence $(\vartheta_n)\subset C_c^\infty(\R^d)$ of
functions satisfying $\chi_{B_n}\le\vartheta_n\le\chi_{B_{n+1}}$ for any $n\in\N$.
As it is easily seen, $\|\vartheta_n\|_{C^k_b(\Rd)}\le 2^{kn}c$ for any $k=0,1,2,3$.
Let us set $\uu_n:=\vartheta_n\uu$ and observe that each function $\uu_n$ vanishes on $[s,T]\times \partial B_{r_{n+1}}(x_0)$ and
$D_t\uu_n=\bm{\mathcal A}\uu_n+{\bf g}_n$ in $(s,T)\times B_{r_{n+1}}(x_0)$,
where ${\bf g}_n=\vartheta_n\g-{\rm Tr}(QD^2\vartheta_n)\uu-2(J_x\uu)Q\nabla\vartheta_n-\sum_{j=1}^dD_j\vartheta_nB_j\uu$.
In view of the variation-of-constants-formula it thus follows that
\begin{eqnarray*}
\uu(t,x)=({\bf G}_{n+1}^{\mathcal D}(t,s)\vartheta_n\uu(s,\cdot))(x)+\int_s^t({\bf G}_{n+1}^{\mathcal D}(t,r){\bf g}_n(r,\cdot))(x)dr,\qquad\;\,t\in [s,T],\;\,x\in B_{r_n},
\end{eqnarray*}
It is well known that
$(t-s)\|{\bf G}_{n+1}^{\mathcal D}(t,s)\h\|_{2,x_0,r_{n+1}}\le c\|\h\|_{0,r_{n+1}}$ and
$\sqrt{t-s}\|{\bf G}_{n+1}^{\mathcal D}(t,s)\k\|_{2,r_{n+1}}\le c\|\k\|_{1,r_{n+1}}$
for any $t\in (s,T)$, any $n\in\N$
and any $\h\in C(\overline{B}_{r_{n+1}};\R^m)$, $\k\in C^1_0(\overline{B}_{r_{n+1}};\R^m)$, respectively.
Note that the constant $c$ in the previous two estimates is independent of
$n$ since it depends on the ellipticity constant of the operator $\bm{\mathcal A}$ and on the H\"older norms of its coefficients in
$(s,T)\times B_{r_{n+1}}(x_0)$, which can be estimated in terms of the same norms taken in $(s,T)\times B_{R_2}(x_0)$.

Estimate \eqref{caroselli-1}, with $\theta=1$, $\beta=2$, ${\mathcal T}={\bf G}_{n+1}^{\mathcal D}(t,s)$, yields that
$(t-s)^{1-\frac{\alpha}{2}}\|{\bf G}_{n+1}^{\mathcal D}(t,s)\boldsymbol{\varphi}\|_{2,r_{n+1}}\le c\|\boldsymbol{\varphi}\|_{\alpha,r_{n+1}}$
for any $\boldsymbol{\varphi}\in C^{\alpha}_0(B_{r_{n+1}};\R^m)$ and $t\in (s,T)$. Since $\g_n(\sigma,\cdot)\in C^{\alpha}_0(B_{r_{n+1}};\R^m)$ for any $\sigma\in (s,s+1)$,
we can estimate
\begin{align}
(t-s) \|\uu_n(t,\cdot)\|_{2,r_n}
\leq c\|\uu\|_{\infty}+c\int_s^t(t-\sigma)^{-1+\frac{\alpha}{2}}\|{\bf g}_n(\sigma,\cdot)\|_{\alpha,r_{n+1}}d\sigma,\qquad\;\,t\in (s,T).
\label{stima-u}
\end{align}
Note that $\|{\bf g}_n(\sigma,\cdot)\|_{\alpha,r_{n+1}}\le c\|\vartheta_n\|_{2+\alpha,r_{n+1}}(\|\uu(\sigma,\cdot)\|_{1+\alpha,r_{n+1}}+\|\g(\sigma,\cdot)\|_{\alpha,r_{n+1}})$
for any $\sigma\in (s,T)$. Using \eqref{caroselli} and Young inequality, for any $\sigma\in (s,T)$, $\varepsilon>0$ and $n\in\N$ we can estimate
\begin{align*}
\|\uu(\sigma,\cdot)\|_{1,r_{n+1}}
&\le  c(\sigma-s)^{-\frac{1}{2}}\|\uu\|_{\infty}^{\frac{1}{2}}\zeta_{n+1}^{\frac{1}{2}}\le (\sigma-s)^{-\frac{1}{2}}(c\varepsilon^{-1}\|\uu\|_{\infty}+\varepsilon\zeta_{n+1}),\\[1mm]
\|J_x\uu(\sigma,\cdot)\|_{\alpha,r_{n+1}}
&\le c(\sigma-s)^{-\frac{\alpha+1}{2}}\|\uu\|_{\infty}^{\frac{1-\alpha}{2}}\zeta_{n+1}^{\frac{1+\alpha}{2}}\le
(\sigma-s)^{-\frac{\alpha+1}{2}}(c\varepsilon^{-\frac{1+\alpha}{1-\alpha}}\|\uu\|_{\infty}+\varepsilon\zeta_{n+1}).
\end{align*}
where $\zeta_n:=\sup_{\sigma\in (s,T)}(\sigma-s)\|\uu(\sigma,\cdot)\|_{2,r_n}$. Since $\|\vartheta_n\|_{C^{2+\alpha}_b(\R^d)}\le c8^n$ for any $n\in\N$,
from these last three estimates we conclude that
$\|{\bf g}_n(\sigma,\cdot)\|_{\alpha,r_{n+1}}\le 8^n(\sigma-s)^{-\frac{\alpha+1}{2}}
(c\varepsilon^{-\frac{1+\alpha}{1-\alpha}}\|\uu\|_{\infty}\!+\varepsilon\zeta_{n+1})+c8^n\|\g(\sigma,\cdot)\|_{\alpha,x_0,r_{n+1}}$
for any $\sigma\in (s,s+1)$ and $\varepsilon>0$. Replacing this estimate
into \eqref{stima-u} yields
\begin{align}
\zeta_n\leq &c\|\uu\|_{\infty}+8^nc\left (\varepsilon^{-\frac{1+\alpha}{1-\alpha}}\|\uu\|_{\infty}+\varepsilon\zeta_{n+1}+\|\g\|_{\alpha/2,\alpha}\right ),\qquad\;\,n\in\N,\;\,\varepsilon\in (0,1).
\label{stima-zeta-n}
\end{align}
Let us fix $\eta\in (0,64^{-1/(1-\alpha)})$ and $\varepsilon=8^{-n}c^{-1}\eta$.
Multiplying both sides of \eqref{stima-zeta-n} by $\eta^n$ and summing from $1$ to $N\in\N$, we get
$\zeta_0-\eta^{N+1}\zeta_{N+1}\le c\|\uu\|_{\infty}\sum_{k=0}^N64^{\frac{k}{1-\alpha}}\eta^k
+c\|\g\|_{\alpha/2,\alpha}\sum_{k=0}^N(8\eta)^k
\le c(\|\uu\|_{\infty}+\|\g\|_{\alpha/2,\alpha})$,
since both the two series converge, due to the choice of $\eta$.
To conclude, we observe that $\eta^{N+1}\zeta_{N+1}$ tends to $0$ as $N\to +\infty$.
Indeed, by assumptions, $\zeta_{N+1}$ is bounded, uniformly with respect to $N$.
It thus follows that $\eta^{n+1}\zeta_{n+1}$ vanishes as $n\to +\infty$.
We have so proved that $(t-s)\|\uu(t,\cdot)\|_{2,x_0,R_1}\le c(\|\uu\|_{\infty}+\|\g\|_{\alpha/2,\alpha})$ for any $t\in (s,T)$.
Again, estimate \eqref{caroselli} implies that
$\|J_x\uu(t,\cdot)\|_{0,x_0,R_1}\le c\|\uu(t,\cdot)\|_{0,x_0,R_1}^{1/2}\|\uu(t,\cdot)\|_{2,x_0,R_1}^{1/2}$ for any $t\in (s,T)$,
and this allows us to complete the proof of \eqref{eq:gradient_interior_estimates}.
\end{proof}

\begin{thm}
\label{thm-A2}
Fix $T>s\in I$ and let $\uu\in
C^{1+\alpha/2,2+\alpha}_{\rm loc}((s,T]\times\Rd;\R^m)$
satisfy the differential equation
$D_t\uu = \bm{\mathcal A}\uu+\g$ in $(s,T]\times\Rd$, for some $\g\in C^{\alpha/2,\alpha}_{\rm loc}((s,T]\times\Rd;\R^m)$.
Then, for any $\tau\in(0,T-s)$ and any pair of bounded open sets $\Omega_1$ and $\Omega_2$ such that $\Omega_1\Subset\Omega_2$, there exists a positive
constant $c$, depending on $\Omega_1, \Omega_2, \tau, s, T$, but being independent of $\uu$, such that
\begin{align}
\|\uu\|_{C^{1+\alpha/2,2+\alpha}((s+\tau,T)\times\Omega_1;\R^m)}
\leq c(\|\uu\|_{C_b((s+\tau/2,T)\times\Omega_2;\R^m)}+\|\g\|_{C^{\alpha/2,\alpha}((s+\tau/2,T)\times\Omega_2;\R^m)}).
\label{app_eq}
\end{align}
Moreover, for any bounded set $J\subset I$ and any $\delta_0>0$, the constant $c$ depends on $\delta_0$ but is independent of $(s,T)\in J\times J$ such that $T-s\ge\delta_0$.
\end{thm}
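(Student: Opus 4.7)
The plan is to reduce the interior estimate to the classical Schauder theory for parabolic Cauchy-Dirichlet problems with bounded H\"older continuous coefficients on a fixed bounded cylinder. Under Hypothesis \ref{base}, on any compact cylinder $[s+\tau/2,T]\times\overline{\Omega_2}$ the coefficients $Q_{ij}, (B_i)_{hk}, C_{hk}$ have bounded $C^{\alpha/2,\alpha}$ norms and $\lambda_Q\ge\lambda_0>0$; moreover, all $m$ components of $\bm{\mathcal A}$ share the common scalar principal part $\sum_{i,j}Q_{ij}D^2_{ij}$, so the system is coupled only through lower-order terms and the scalar Schauder machinery applies componentwise, treating the coupling terms as part of the source.

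First I would fix intermediate open sets $\Omega_1\Subset\Omega_1'\Subset\Omega_2'\Subset\Omega_2$, a spatial cut-off $\vartheta\in C^\infty_c(\Omega_2')$ with $\vartheta\equiv 1$ on $\Omega_1'$, and a temporal cut-off $\eta\in C^\infty(\R)$ with $\eta\equiv 0$ on $(-\infty,s+\tau/2]$ and $\eta\equiv 1$ on $[s+3\tau/4,+\infty)$. Setting $\vv:=\eta\vartheta\uu$, the function $\vv$ belongs to $C^{1+\alpha/2,2+\alpha}([s+\tau/2,T]\times\overline{\Omega_2'};\R^m)$, vanishes identically on the parabolic boundary of $(s+\tau/2,T)\times\Omega_2'$, and satisfies $D_t\vv-\bm{\mathcal A}\vv=\h$, where
\[
\h=\eta'\vartheta\uu+\eta\vartheta\g-\eta\uu\sum_{i,j=1}^d Q_{ij}D^2_{ij}\vartheta-2\eta(J_x\uu)Q\nabla\vartheta-\eta\sum_{j=1}^d (D_j\vartheta)B_j\uu.
\]
Applying the classical Schauder estimate for the parabolic Cauchy-Dirichlet problem in $(s+\tau/2,T)\times\Omega_2'$ (see \cite[Thm. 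IV.5.2]{LadSolUra68Lin}, used componentwise because the principal part is scalar) yields
\[
\|\vv\|_{C^{1+\alpha/2,2+\alpha}((s+\tau/2,T)\times\Omega_2';\R^m)}\le C\|\h\|_{C^{\alpha/2,\alpha}((s+\tau/2,T)\times\Omega_2';\R^m)},
\]
with $C$ depending on the $C^{\alpha/2,\alpha}$ norms of the coefficients of $\bm{\mathcal A}$ on $[s+\tau/2,T]\times\overline{\Omega_2'}$, on $\lambda_0$, on $\tau$ and on the cut-offs, but not on $\uu$.

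Since $\vv\equiv\uu$ on $[s+\tau,T]\times\Omega_1$, the task reduces to bounding $\|\h\|_{C^{\alpha/2,\alpha}}$. A direct estimate gives
\[
\|\h\|_{C^{\alpha/2,\alpha}}\le C'\bigl(\|\uu\|_{C^{\alpha/2,\alpha}((s+\tau/2,T)\times\overline{\Omega_2'};\R^m)}+\|J_x\uu\|_{C^{\alpha/2,\alpha}((s+\tau/2,T)\times\overline{\Omega_2'};\R^m)}+\|\g\|_{C^{\alpha/2,\alpha}}\bigr).
\]
The first summand is controlled by the sup-norm of $\uu$ on the slightly larger cylinder via the parabolic version of the interpolation inequality \eqref{caroselli}. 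The intermediate term $\|J_x\uu\|_{C^{\alpha/2,\alpha}}$ on the right, however, is the main obstacle, since the desired estimate \eqref{app_eq} must eliminate all derivatives of $\uu$ from the right-hand side.

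To absorb this gradient term I would use the interpolation inequality \eqref{caroselli} in parabolic form: for any $\varepsilon>0$ there is $C_\varepsilon$ such that $\|J_x\uu\|_{C^{\alpha/2,\alpha}(K)}\le \varepsilon\|\uu\|_{C^{1+\alpha/2,2+\alpha}(K')}+C_\varepsilon\|\uu\|_\infty$ whenever $K\Subset K'$, combined with a telescoping/iteration argument over nested cylinders with geometrically shrinking separations, exactly in the spirit of the proof of Proposition \ref{prop-A1}. Choosing $\varepsilon$ sufficiently small and summing the iterated inequalities absorbs the $C^{1+\alpha/2,2+\alpha}$ norm appearing on the right and yields \eqref{app_eq}. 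Finally, the claimed uniformity with respect to $(s,T)$ for $T-s\ge\delta_0$ and $s\in J\Subset I$ follows at once, because the constant produced by this argument depends on $(s,T)$ only through the $C^{\alpha/2,\alpha}$ norms of the coefficients of $\bm{\mathcal A}$ and through $\lambda_0^{-1}$ on the compact set $\overline{J}\times\overline{\Omega_2}$, and through $\delta_0$ via the time cut-off.
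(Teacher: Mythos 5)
Your proposal follows the same route as the paper's proof: multiply by space--time cut-offs, apply the classical Schauder estimate for the resulting Cauchy--Dirichlet problem on a bounded cylinder (componentwise, since the system has a common scalar principal part), bound the commutator source $\h$ in terms of $\|\uu\|_{C^{\alpha/2,\alpha}}$ and $\|J_x\uu\|_{C^{\alpha/2,\alpha}}$, interpolate to generate an $\varepsilon$-multiple of the top-order norm, and iterate over nested cylinders with geometrically shrinking separations to absorb that term by summing the resulting inequalities. The covering argument to pass to general $\Omega_1\Subset\Omega_2$ and the uniformity in $(s,T)$ for $T-s\ge\delta_0$ are exactly as you describe.

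The one step you treat too lightly is the ``parabolic form'' of \eqref{caroselli}: the bound $\|J_x\uu\|_{C^{\alpha/2,\alpha}}\le\varepsilon\|\uu\|_{C^{1+\alpha/2,2+\alpha}}+C_\varepsilon\|\uu\|_\infty$. Equation \eqref{caroselli} is a \emph{one-variable} interpolation and directly controls only the spatial $\alpha$-H\"older seminorm of $J_x\uu$ at fixed $t$. What is not immediate is the $\alpha/2$-H\"older seminorm in time of $J_x\uu$, because membership in $C^{1+\alpha/2,2+\alpha}$ gives regularity of $D_t\uu$ and $D^2_x\uu$ but says nothing directly about $D_tJ_x\uu$. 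The paper fills precisely this gap: after cutting off, it first observes that $\vv\in\mathrm{Lip}([s,T];C^\alpha_b)$ with Lipschitz norm controlled by $\|\vv\|_{1+\alpha/2,2+\alpha}$ (using only $D_t\vv\in C^{0,\alpha}$), then interpolates \emph{in space} between $C^\alpha_b$ and $C^{2+\alpha}_b$ to get $\|\vv(t_2,\cdot)-\vv(t_1,\cdot)\|_1\le c\|\vv\|_{1+\alpha/2,2+\alpha}|t_2-t_1|^{(1+\alpha)/2}$, and only then interpolates in time to obtain the $\alpha/2$-seminorm estimate. Without this intermediate argument your iteration does not close. If you incorporate it, your proof is complete and coincides with the paper's.
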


\begin{proof}
The main step of the proof consists in showing that, for any $x_0\in\overline{\Omega_1}$ and
any $r>0$, such that $B_{2r}(x_0)\Subset\Omega_2$, estimate \eqref{app_eq} is satisfied with
$\Omega_1=B_r(x_0)$ and $\Omega_2=B_{2r}(x_0)$. Indeed, once this latter estimate is proved, a covering argument will allow us to obtain easily estimate
\eqref{app_eq}.  So, let us prove \eqref{app_eq} with $\Omega_1$ and $\Omega_2$ as above and $x_0=0$ (indeed, up to a translation, we can reduce to this case).
Throughout the proof, we denote by $c$ a positive constant, independent of $\uu$, $\g$ and $n$, which may vary from line to line.
For any $n\in\N$, $t\in\R$ and $x\in\Rd$, set
\begin{eqnarray*}
\varphi_n(t)=\varphi\left(1+\frac{t-s-t_{n+1}}{t_n-t_{n+1}}\right),
\qquad\;\, \vartheta_n(x)=\vartheta\left(1+\frac{|x|-r_n}{r_{n+1}-r_n}\right),
\end{eqnarray*}
where $\varphi,\vartheta\in C^\infty(\R)$ satisfy
$\chi_{[2,\infty)}\leq\varphi\leq\chi_{[1,\infty)}$ and
$\chi_{(-\infty,1]}\leq\vartheta\leq\chi_{(-\infty,2]}$, $r_n=(2-2^{-n})r$
and $t_n=(2^{-1}+2^{-n-1})\tau$, for any $n\in\N$. For any $n\in\N$, the function
$\vv_n:=\uu\varphi_n\vartheta_n$ vanishes at $t=s$ and satisfies $D_t\vv_n=\hat{\bm{\mathcal A}}\vv_n+{\bf g}_n$,
where
${\bf g}_n=\varphi_n\vartheta_n\g-\varphi_n\uu{\rm Tr}(QD^2\vartheta_n)
-\vartheta_n\sum_{j=1}^dD_j\vartheta_nB_j\uu-2\varphi_n(J_x\uu)Q\nabla\vartheta_n+\varphi_n'\vartheta_n\uu$
and $\hat{\bm{\mathcal A}}$ is a nonautonomous elliptic operator with bounded and smooth coefficients, which coincides
with the coefficients of the operator $\bm{\mathcal A}$ in $[s,T]\times B_{2r}$ (recall that $\vv_n$ is compactly supported in $[s,T]\times B_{2r}$).

By well known results, $\|\vv_n\|_{1+\alpha/2,2+\alpha}\leq c\|{\bf g}_n\|_{\alpha/2,\alpha}$
and, for any $J$ and $\delta_0$ as in the statement, the constant $c$ is independent of $(s,T)\in J\times J$ such that $T-s\ge\delta_0$.
Estimating
$\|\g_n\|_{\alpha/2,\alpha}$, we get
\begin{align}
\|\vv_n\|_{1+\alpha/2,2+\alpha}\leq &c\|\vartheta_n\|_3\|\varphi\|_2\big (\|\uu\|_{C^{\alpha/2,1+\alpha}((s+t_{n+1},T)\times
B_{r_{n+1}};\R^m)}+\|\g\|_{C^{\alpha/2,\alpha}((s+t_{n+1},T)\times B_{r_{n+1}};\R^m)}\big )\notag \\
\le & 2^{5n}c\big (\|\vv_{n+1}\|_{\alpha/2,\alpha}+\|J_x\vv_{n+1}\|_{\alpha/2,\alpha}
+\|\g\|_{C^{\alpha/2,\alpha}((s+\tau/2,T)\times B_{2r};\R^m)}\big ).
\label{app_eq-1}
\end{align}
Now, using \eqref{caroselli} we get
$\|\boldsymbol{\zeta}\|_\alpha  \leq c(\varepsilon^{-j-\frac{(j+1)\alpha}{2}}\|\boldsymbol{\zeta}\|_{\infty}+\varepsilon\|\boldsymbol{\zeta}\|_{2+\alpha})$ ($j=1,2$) and
$\|\boldsymbol{\psi}\|_{\alpha/2}  \leq c(\varepsilon^{-\frac{\alpha}{2}}\|\boldsymbol{\psi}\|_{\infty}+\varepsilon\|\boldsymbol{\psi}\|_{1+\alpha/2})$
for any $\boldsymbol{\zeta}\in C_b^{2+\alpha}(\R^d;\R^m)$,
$\boldsymbol{\psi}\in C_b^{1+\alpha/2}([s,T];\R^m)$, $\varepsilon(0,1)$ and some positive constant $c$ independent of $\varepsilon$. Applying these estimates to $\vv_{n+1}$, we deduce that
\begin{equation}
\|\vv_{n+1}\|_{\alpha/2,\alpha}+\|J_x\vv_{n+1}\|_{0,\alpha}
\leq c(\varepsilon\|\vv_{n+1}\|_{1+\alpha/2,2+\alpha}+\varepsilon^{-(1+\alpha)}\|\vv_{n+1}\|_{\infty}),\qquad\;\,\varepsilon\in (0,1)\;\,n\in\N.
\label{app_eq-2}
\end{equation}

To estimate the $\alpha/2$-H\"older norm of the function $J_x\vv_{n+1}(\cdot,x)$ for any $x\in\R^d$, we observe that $\vv_{n+1}\in {\rm Lip}([s,T],C^\alpha_b(\R^n;\R^m))$ and $\|\vv_{n+1}\|_{\rm Lip}\leq c\|\vv_{n+1}\|_{1+\alpha/2,2+\alpha}$.
Indeed, writing
\begin{align*}
\vv_{n+1}(t_2,x)-\vv_{n+1}(t_1,x)=\int_{t_1}^{t_2}D_t\vv_{n+1}(\sigma,x)d\sigma,\qquad\;\,t_1,t_2\in [s,T],\;\,x\in\Rd,
\end{align*}
we easily deduce that
$\|\vv_{n+1}(t_2,\cdot)-\vv_{n+1}(t_1,\cdot)\|_{\alpha}\le c\|D_t\vv_{n+1}\|_{0,\alpha}|t_2-t_1|$ for any $t_1,t_2\in [s,T]$.
Since the function $\vv_{n+1}$ is bounded in $[s,T]$ with values in $C^{2+\alpha}_b(\R^d)$, by interpolation we get
\begin{align*}
\|\vv_{n+1}(t_2,\cdot)-\vv_{n+1}(t_1,\cdot)\|_1\le & c\|\vv_{n+1}(t_1,\cdot)-\vv_{n+1}(t_2,\cdot)\|_{\alpha}^{\frac{1+\alpha}2}
\|\vv_{n+1}(t_2,\cdot)-\vv_{n+1}(t_1,\cdot)\|_{2+\alpha}^{\frac{1-\alpha}2} \\
\le & c\|\vv_{n+1}\|_{1+\alpha/2,2+\alpha}|t_2-t_1|^{\frac{1+\alpha}2}
\end{align*}
for any $t_1,t_2\in [s,T]$.
This shows that $J_x\vv_{n+1}\in C^{(1+\alpha)/2,0}((s,T)\times\Rd;\R^{md})$ and
\begin{equation}
\|J_x\vv_{n+1}\|_{(1+\alpha)/2,0}\leq c\|\vv_{n+1}\|_{1+\alpha/2,2+\alpha}.
\label{app_eq-3}
\end{equation}

Using the interpolative estimate
$\|\boldsymbol{\psi}\|_{\alpha/2}\leq c(\varepsilon\|\boldsymbol{\psi}\|_{(1+\alpha)/2}+\varepsilon^{-\alpha}\|\boldsymbol{\psi}\|_{\infty})$,
which holds for any $\varepsilon\in (0,1)$ and any $\boldsymbol{\psi}\in C_b^{(1+\alpha)/2}([s,T];\R^m)$,
\eqref{app_eq-2} and \eqref{app_eq-3}, we obtain that
\begin{equation}
\|J_x\vv_{n+1}\|_{\alpha/2,\alpha}\leq c(\varepsilon\|\vv_{n+1}\|_{1+\alpha/2,2+\alpha}+\varepsilon^{-(1+\alpha)}\|\vv_{n+1}\|_{\infty}
+\varepsilon^{-\alpha}\|J_x\vv_{n+1}\|_{\infty}).
\label{app_eq-4}
\end{equation}
Now,
$\|J_x\vv_{n+1}\|_{\infty}\le\delta\|\vv_{n+1}\|_{0,2+\alpha}+\delta^{-\frac{1}{1+\alpha}}\|\vv_{n+1}\|_{\infty}$ for any $\delta\in (0,1)$.
Choosing $\delta=\varepsilon^{1+\alpha}$ and replacing this estimate in \eqref{app_eq-4}, we get
$\|J_x\vv_{n+1}\|_{\alpha/2,\alpha}\leq c(\varepsilon\|\vv_{n+1}\|_{1+\alpha/2,2+\alpha}+\varepsilon^{-(1+\alpha)}\|\vv_{n+1}\|_{\infty})$.
This estimate, \eqref{app_eq-1} and \eqref{app_eq-2} yield that
\begin{align*}
\|\vv_n\|_{1+\alpha/2,2+\alpha}\leq 2^{5n} c(\varepsilon\|\vv_{n+1}\|_{1+\alpha/2,2+\alpha}+\varepsilon^{-(1+\alpha)}\|\vv_{n+1}\|_{\infty}
+\|\g\|_{C^{\alpha/2,\alpha}((s+\tau/2,T)\times B_{2r};\R^m)}),
\end{align*}
for any $\varepsilon\in(0,1)$. We fix $\eta\in(0,2^{-5(2+\alpha)})$ and choose $\varepsilon=\varepsilon_n=2^{-5n}c^{-1}\eta$; from the previous estimate we obtain
$\zeta_n\le (\eta\zeta_{n+1}+2^{5n(2+\alpha)}c\|\uu\|_{C_b((s+\tau/2)\times B_{2r})}+2^{5n}c\|\g\|_{C^{\alpha/2,\alpha}((s+\tau/2,T)\times B_{2r};\R^m)})$,
where $\zeta_n=\|\vv_n\|_{1+\alpha/2,2+\alpha}$. Now, we can proceed as in the proof of Proposition \ref{prop-A1}.
\end{proof}


\begin{thebibliography}{99}

\bibitem{AmbFusPal}
L. Ambrosio, N. Fusco, D. Pallara,
\newblock{Functions of bounded variations and free discontinuity problems},
\newblock{Oxford University Press, Oxford 2000}.

\bibitem{AngLor10Com}
L. Angiuli, L. Lorenzi,
\newblock{\em Compactness and invariance properties of evolution operators
associated to Kolmogorov operators with unbounded coefficients},
\newblock{ J. Math. Anal. Appl.} {\bf 379} (2011), 125--149.

\bibitem{AngLor13OnT}
L. Angiuli, L. Lorenzi,
\newblock{\em On the Dirichlet and Neumann evolution operators in $\Rd_+$},
\newblock{ Potential Anal. \textbf{41} (2014)}, 1079-1110.

\bibitem{AngLor14Non}
L. Angiuli, L. Lorenzi,
\newblock{\it Non autonomous parabolic problems with unbounded coefficients in unbounded domains},
\newblock{(to appear on Adv. differential equations), available at http://arxiv.org/abs/1410.6586.}

\bibitem{AngLorLun12Asy}
L. Angiuli, L. Lorenzi, A. Lunardi,
\newblock{\it Hypercontractivity and asymptotic behaviour in nonautonomous Kolmogorov equations},
\newblock{Commun. Partial Differential Equations {\bf 38} (2013), 2049-2080.}

\bibitem{AngLorPal15Lpe}
L. Angiuli, L. Lorenzi, D. Pallara,
\newblock{\it $L^p$-estimates for parabolic systems with unbounded coefficients
coupled at zero and first order},
\newblock{(submitted), available at http://arxiv.org/abs/1505.04893}.


\bibitem{Ber06Sur}
S. Bernstein,
\newblock{\it Sur la g\'en\'eralisation du probl\'eme de Dirichlet, I,}
\newblock{Math. Ann. \textbf{62} (1906), 253-271.}

\bibitem{BerLor07}
M. Bertoldi, L. Lorenzi,
\newblock{\it Estimates of the derivatives for parabolic operators with unbounded coefficients},
\newblock{Trans. Amer. Math. Soc. \textbf{357} (2005), 2627-2664.}

\bibitem{BerLorbook}
M. Bertoldi, L. Lorenzi,
\newblock{Analytical methods for Markov semigoups},
\newblock{Chapman Hall/CRC Press, 2006}.

\bibitem{bens-frehse}
A. Bensoussan, J. Frehse,
\newblock{\em Stochastic games for $N$ players},
\newblock{J. Optim. Theory Appl.}, {\bf 105} (2000), 543-565.


\bibitem{bensoussan-reg}
A. Bensoussan, J. Frehse,
\newblock{Regularity results for nonlinear elliptic systems and applications.}
Applied Mathematical Sciences, {\bf 151}, Springer-Verlag, Berlin, 2002.

\bibitem{DelLor11OnA}
S. Delmonte, L. Lorenzi,{\em On a class of weakly coupled systems of elliptic operators with unbounded coefficients}, Milan J. Math. {\bf 79} (2011), 689-727.

\bibitem{Fri64Par}
A. Friedman,
\newblock{Partial differential equations of parabolic type}\,,
\newblock{Prentice Hall, Englewood Cliffs, N. J.}, 1964.

\bibitem{friedman-diffgames}
A. Friedman,
\newblock{Differential games},
\newblock{Pure and Applied Mathematics} {\bf 25},  Wiley-Interscience,  New York-London, 1971.

\bibitem{friedman-stochastic}
A. Friedman,
\newblock{\em Stochastic differential games},
\newblock{J. Differential Equations}, {\bf 11} (1972), 79-108.

\bibitem{fuhrman-hu}
M. Fuhrman, Y. Hu,
{\it Backward stochastic differential equations in infinite dimensions with continuous driver and applications}.
Appl. Math. Optim. {\bf 56} (2007), 265-302.

\bibitem{fuhrman-tessitore}
M. Fuhrman, G. Tessitore,
{\it Nonlinear Kolmogorov equations in infinite dimensional spaces: the
backward stochastic differential equations approach and application to optimal
control}. The Annals of Probability {\bf30} (2002), 1397-1465.



\bibitem{Gronwall}
T.H. Gronwall,
\newblock{\em Note on the derivation with respect to a parameter of the solutions of a system of differential equations}\,,
\newblock{Ann. Math. \textbf{20} (1919)}, 292-296.

\bibitem{hamadene-1}
S. Hamad\`ene, J.P. Lepeltier,
{\it Backward equations, stochastic control and zero-sum
stochastic differential games}. Stochastics and Stochastic Reports, {\bf 54} (1995b), 221-231.

\bibitem{hamadene-2}
S. Hamad\`ene, R. Mu,
{\it Existence of Nash equilibrium points for Markovian nonzero-sum stochastic
differential games with unbounded boefficients}.
\newblock{Stochastics} {\bf 87} (2015), 85-111.


\bibitem{HieLorPruRhaSch09Glo}
M. Hieber, L. Lorenzi, J. Pruss, A. Rhandi, R. Schnaubelt,
\newblock{\em Global properties of generalized Ornstein-Uhlenbeck operators in $L^p(\R^N,\R^N)$ with more than linearly growing coefficients}
\newblock{J. Math. Anal. Appl.}{\bf 350} (2009), 100-121.

\bibitem{kar-shreve}
I. Karatzas, S.E. Shreve,
\newblock{Brownian motion and stochastic calculus},
\newblock{Springer Verlag, New York}, 1988.

 \bibitem{KreMaz12Max}
G. Kresin, V.G. Maz'ia, Maximum principles and sharp constants for solutions of elliptic and parabolic systems, Mathematical Surveys and Monographs \textbf{183}. American Mathematical Society, Providence, RI, 2012.

\bibitem{kunita}
H. Kunita,
\newblock{Stochastic flows and stochastic differential equations},
\newblock{Cambridge Univ. Press}, 1997

\bibitem{KunLorLun09Non}
M. Kunze, L. Lorenzi, A. Lunardi,
\newblock {\em Nonautonomous Kolmogorov parabolic equations with unbounded coefficients},
\newblock {Trans. Amer. Math. Soc. \textbf{362} (2010)}, 169-198.

\bibitem{Kuratowski}
K. Kuratowski,
\newblock{Topology I}, Academic Press, New York, 1966.

\bibitem{LadSolUra68Lin}
O.A. Lady$\check{\textrm{z}}$henskaja, V.A. Solonnikov, N.N. Ural'ceva,
\newblock{Linear and quasilinear equations of parabolic type},\, Nauka, Moscow, 1967
\newblock{English transl.: American Mathematical Society},\, Providence, R.I., 1968.

\bibitem{lorenzi-1}
L. Lorenzi,
\newblock{\it Optimal regularity for nonautonomous Kolmogorov equations},
\newblock{Discr. Cont. Dyn. Syst. Series S {\bf 4} (2011), 169-191.}

\bibitem{lunardi}
A. Lunardi,
\newblock{\it Schauder theorems for linear elliptic and parabolic problems with unbounded coefficients
in $\R^N$},
\newblock{Studia Mathematica {\bf 128} (1998), 171-198.}

\bibitem{Lun10Com}
A. Lunardi,
\newblock{\em Compactness and asymptotic behavior in nonautonomous
linear parabolic equations with unbounded coefficients in $\R^d$}\,,
\newblock {In Parabolic Prolems. The Herbert Amann Festschrift, Progress in Nonlinear Differential Equations \textbf{80}, Springer, Basel (2011)}, 447-461.

\bibitem{mannucci}
P. Mannucci,
\newblock{\em Nonzero-sum stochastic differential games with discontinuous feedback},
\newblock{ SIAM J. Control Optim.} {\bf 43} (2004/05), 1222-1233.

\bibitem{MPW}
G. Metafune, D. Pallara, M. Wacker,
\newblock{\em Compactness properties of {F}eller semigroups},
\newblock{Studia Math.} {\bf 153} (2002), 179-206.


\bibitem{Morayne}
M. Morayne,
\newblock{\em Sierpi\'nski hierarchy and locally Lipschitz functions}\,,
\newblock{Fundamenta Mathematicae, \textbf{147} (1995)}, 73-82.

\bibitem{pardoux-peng-1992}
E. Pardoux, S. Peng,
{\it Backward stochastic differential equations and quasilinear
parabolic partial differential equations. Stochastic Partial Differential
Equations and Their Applications}. Lecture Notes in Control Inf. Sci., Springer,
Berlin {\bf176} (1992), 200-217.

\bibitem{Rudin}
W. Rudin,
\newblock{Real and complex analysis},\,
\newblock{McGraw-Hill Book Co., New York-Toronto, Ont.-London}, 1966.

\bibitem{VanrooijSchik}
A.C.M. van Rooij, W.H. Schikhof,
A second course on real functions, Cambridge University Press, 1982.
\end{thebibliography}
\end{document}